% Modif. April 30, 2010
% Send comments to publ@impan.pl

\documentclass[12pt, twoside]{article}
\usepackage{amsmath,amsthm,amssymb,color}
\usepackage{times}
\usepackage{enumerate}
\usepackage[utf8]{inputenc}
\usepackage[T1]{fontenc}

\pagestyle{myheadings}
\def\titlerunning#1{\gdef\titrun{#1}}
\makeatletter
\def\author#1{\gdef\autrun{\def\and{\unskip, }#1}\gdef\@author{#1}}
\def\address#1{{\def\and{\\\hspace*{18pt}}\renewcommand{\thefootnote}{}%
\footnote {#1}}%
\markboth{\autrun}{\titrun}}
\makeatother
\def\email#1{e-mail: #1}
\def\subjclass#1{{\renewcommand{\thefootnote}{}%
\footnote{\emph{Mathematics Subject Classification (2010):} #1}}}

%% Numbered objects of "theorem" style (text italicized).
%% The optional parameters indicate that all objects are numbered together, and "by section".
%% However, you are welcome to use any other numbering system of your choice.

\newtheorem{theorem}{Theorem}[section]
\newtheorem{corollary}[theorem]{Corollary}
\newtheorem{lemma}[theorem]{Lemma}
\newtheorem{hyp}[theorem]{Hypothesis}

\makeatletter
\newcommand{\tpmod}[1]{{\@displayfalse\pmod{#1}}}
\makeatother

%% A numbered theorem with a fancy name:

%% Numbered objects of "non-theorem" style (text roman):

\theoremstyle{definition}

%% An unnumbered remark:

\newtheorem*{remark}{Remark}

%% Equations numbered by section:

\numberwithin{equation}{section}

%%%%%%%%%%% For JEMS
\frenchspacing

\textwidth=15cm
\textheight=23cm
\parindent=16pt
\oddsidemargin=-0.5cm
\evensidemargin=-0.5cm
\topmargin=-0.5cm

%%%%%%%%%%%%%%%%%%%%%%%%%%%%%%%%%%%
%%%%%%%%%%%%%%%%%%%%%%%%%%%%%%%%%%%

%%%% Put your macros here:

\newcommand\numberthis{\addtocounter{equation}{1}\tag{\theequation}}

%%%%%%%%%%%%%

\begin{document}

%%%%% To ease editing, add:

\baselineskip=17pt

%%%%%%%%%%%%%%%%

%% In the running head, give an abbreviation of the title.
\titlerunning{Asymptotic trace formula}

\title{Asymptotic trace formula for the Hecke operators}

\author{Junehyuk Jung
\and
Naser T. Sardari
\and (With an appendix by Simon  Marshall)}
\date{}
\maketitle
\address{J. Jung: Department of Mathematics, Brown University, Providence, RI 02912 USA; \email{junehyuk\textunderscore jung@brown.edu}
\and N. T. Sardari: The Institute
For Advanced Study, Princeton, NJ 08540 USA;  \email{ntalebiz@ias.edu}
\and S. Marshall: Department of Mathematics, UW-Madison, Madison, WI 53706 USA; \email{marshall@math.wisc.edu })}

\subjclass{Primary 11F25; Secondary 11F72}

%%%%%%%%

\begin{abstract}
Given integers $m$, $n$ and $k$,  we  give  an explicit formula  with an optimal error term (with square root cancelation) for the Petersson trace formula involving the $m$th and $n$th Fourier coefficients of an orthonormal basis of $S_k\left(N\right)^*$ (the weight $k$ newforms with fixed square-free level $N$) provided that $|4 \pi \sqrt{mn}- k|=o\left(k^{\frac{1}{3}}\right)$. Moreover, we  establish an explicit formula with a power saving error term for  the trace of the  Hecke operator $\mathcal{T}_n^*$ on $S_k\left(N\right)^*$  averaged over   $k$ in a short interval.  By  bounding  the second moment of the trace of  $\mathcal{T}_{n}$ over a larger interval, we show that the trace of $\mathcal{T}_n$ is unusually large in the range  $|4 \pi \sqrt{n}- k| = o\left(n^{\frac{1}{6}}\right)$. As an application, for any fixed prime $p$ coprime to $N$,  we show that there exists a sequence $\{k_n\}$  of weights  such that the error term of Weyl's law for $\mathcal{T}_p$ is unusually large and  violates the prediction of arithmetic quantum chaos. In particular, this generalizes the result of  Gamburd, Jakobson and Sarnak~\cite[Theorem 1.4]{Gamburd} with an improved exponent.
%%% Keywords are optional
%\keywords{Modular forms, Spectral theory, Weyl's law.}
\end{abstract}

\bigskip
\footnotesize
\noindent\textit{Acknowledgments.}
J.J. thanks S.M. and Department of Mathematics of UW-Madison for invitation and support. J.J. also thanks Sug Woo Shin, Peter Jaehyun Cho, and Matthew Young for many helpful comments.
J.J. was supported by NSF grant DMS-1900993, and by Sloan Research Fellowship. S.M. was supported by NSF grant DMS-1902173. N.T.S. was supported by NSF grant DMS-2015305 and  is grateful to Max Planck Institute for Mathematics in Bonn and  Institute
For Advanced Study for their hospitalities and financial supports.   N.T.S. thanks his Ph.D. advisor Peter Sarnak for several  insightful and inspiring conversations regarding the error term of the Weyl law while he was a graduate student at Princeton University.

%%%%%%%%%%% To ease editing, use normal size:

\normalsize
\baselineskip=17pt

\section{Introduction}
In this paper, we give bounds for the error term of Weyl's law  for the Hecke eigenvalues of the family of classical  holomorphic modular forms with a fixed level. We briefly describe this family, its Weyl's law, and known bounds and  predictions on its error term. Next,  we explain our results and compare them with the previous  results and predictions.

Let
\[
\Gamma_0\left(N\right):=\left\{\begin{pmatrix}a & b \\ c &d  \end{pmatrix}\in SL_2\left(\mathbb{Z}\right)~:~  c\equiv 0   \pmod{N}   \right\}
\]  be the Hecke congruence subgroup of level $N$. Let  $S_k\left(N\right)$ be the space of even weight $k\in \mathbb{Z}$ modular forms of level $N$. It is the space of the holomorphic functions $f$ such that
\[
f\left(\frac{az+b}{cz+d}\right)=\left(cz+d\right)^k f\left(z\right)
\]
for every $\begin{pmatrix} a &b \\ c & d \end{pmatrix} \in \Gamma_0\left(N\right)$, and $f$ converges to zero as it approaches each cusp (we have finitely many cusps for $\Gamma_0\left(N\right)$ that are associated to the orbits of $\Gamma_0\left(N\right)$ acting by M\"obius transformations on $\mathbb{P}^1\left(\mathbb{Q}\right)$) \cite{sarnak_1990}.   It is well-known that $S_k\left(N\right)$ is a finite dimensional vector space over $\mathbb{C}$, and is equipped with the Petersson inner product
\[
\langle f, g \rangle:=\int_{\Gamma_0\left(N\right) \backslash \mathbb{H}} f\left(z\right)\overline{g\left(z\right)} y^k \frac{dx dy}{y^2}.
\]
 Assume that $n$ is fixed and  is coprime to $N$. Then the $n$th (normalized)  Hecke operator $\mathcal{T}_n$ acting on $S_k\left(N\right)$ is given by
\begin{equation}\label{Hecke}
\mathcal{T}_n\left(f\right)\left(z\right):=n^{\frac{k-1}{2}}\sum_{ad=n}d^{-k} \sum_{b \pmod{d}} f\left(\frac{az+b}{d} \right).
\end{equation}
The Hecke operators form a commuting family of self-adjoint operators with respect to the Petersson inner product, and therefore $S_k\left(N\right)$ admits an orthonormal basis $B_{k,N}$ consisting only of joint eigenfunctions of the Hecke operators. Any form $f$ belonging to $ B_{k,N}$ is referred as a (Petersson normalized) holomorphic Hecke cusp form.

Let
\[
f\left(z\right)=\sum_{n=1}^{\infty} \rho_f(n) n^{\frac{k-1}{2}}e\left(nz\right)
\]
be the Fourier expansion of $f \in B_{k,N}$ at the cusp $\infty$. For $\gcd(n,N)=1$, we denote by $\lambda_f(n)$ the $n$th (normalized) Hecke eigenvalue of $f$, i.e.,
\[
\mathcal{T}_n f = \lambda_f(n) f,
\]
and we have $\rho_f(n) = \rho_f(1) \lambda_f(n)$ \cite[p. 107]{Iwaniecb}. By the celebrated result due to Deligne  \cite{Deligne}, we have
\begin{equation}\label{deligne}
|\lambda_f\left(n\right)|\leq \sigma(n),
\end{equation}
for all $n$, where $\sigma$ is the divisor function. \footnote{We use the divisor function parameterized by $t$, defined by $\sigma_t\left(n\right) =\sum_{d|n} d^t$. When $t=0$, we drop $0$, and use $\sigma$ instead of $\sigma_0$.}

Under Langlands' philosophy,  the Hecke operator $\mathcal{T}_p$ is the $p$-adic analogue of the Laplace operator, in the following sense. The eigenvalues of $\mathcal{T}_p$ determine the Satake parameters of the associated local representation $\pi_{p}$ of $GL_2\left(\mathbb{Q}_p\right)$ just as the Laplace eigenvalue of the Maass form determines  the associated local representation $\pi_{\infty}$ of  $GL_2\left(\mathbb{R}\right)$.

Now fix a rational prime $p$ that is coprime to $N$, and let
\[
\mu_{k,N}:=\frac{1}{\dim\left(S_k\left(N\right)\right)} \sum_{ f\in B_{k,N}} \delta_{\lambda_f\left(p\right)}
\]
be the spectral probability measure associated to $\mathcal{T}_p$ acting on $S_k\left(N\right)$. Using the Eichler--Selberg trace formula, Serre \cite{serre} proved that $\mu_{k,N}$ converges weakly to $\mu_p$ as $k+N\to \infty$ with $\gcd(N,p)=1$, where $\mu_p$ is the Plancherel measure of $GL_2\left(\mathbb{Q}_p\right)$ given by
\[
\mu_p\left(x\right):=\frac{p+1}{\pi} \frac{\left(1-\frac{x^2}{4}\right)^{\frac{1}{2}}}{\left(p^{\frac{1}{2}}+p^{-\frac{1}{2}}\right)^2-x^2} dx.
\]
Moreover, if we let
\[
\nu_{k,N}:= \frac{\Gamma(k-1)}{(4\pi)^{k-1}}\sum_{f\in B_{k,N}} |\rho_f(1)|^2 \delta_{\lambda_f\left(p\right)},
\]
then it follows from the Petersson trace formula (see Section~\ref{peters})  that $\nu_{k,N}$ converges weakly to the semi-circle law
\[
\mu_{\infty}\left(x\right):=\frac{1}{\pi} \sqrt{1-\frac{x^2}{4}} dx,
\]
as $k+N\to \infty$ with $\gcd(N,p)=1$.

\subsection{Quantitative rate of convergence}
Given two probability measures $\mu_1$ and $\mu_2$ on $\mathbb{R}$, we denote the discrepancy between them by $D\left(\mu_1,\mu_2\right),$ where
$$
D\left(\mu_1,\mu_2\right):=\sup \{|\mu_1\left(I\right)-\mu_2\left(I\right)|: I=[a,b]\subset \mathbb{R}   \}.
$$
 In \cite{Gamburd},  Gamburd, Jakobson and Sarnak studied the spectrum of the elements in the group ring of $SU\left(2\right)$, and proved
\begin{equation}\label{upperweyl}
D\left(\mu_{k,2},\mu_p\right)=O\left(\frac{1}{\log k}\right).
\end{equation}
Moreover, \cite[Theorem 1.4]{Gamburd} is equivalent  to  the existence of  a sequence of integers  $k_n\to \infty$ such that
\begin{equation}\label{gap}
D\left(\mu_{k_n,2},\mu_p\right) \gg \frac{1}{k_n^{\frac{1}{2}}(\log k_n)^2}.
\end{equation}
This is a corollary of their lower bound for the variance of the trace of the Hecke operators by varying the weight $k$ (Theorem~\ref{selberg2}).

\subsection{Bounds for the error term of Weyl's law}
%We make an analogy between  the convergence of $\mu_{nk}\to \mu_p$ and Weyl's law for the eigenvalues of the Laplace--Beltrami operator, and \eqref{upperweyl} and~\eqref{gap} with the bounds on the error terms of  Weyl's law  for this family of classical modular forms. In fact,

We now present some details regarding the philosophical analogy between Weyl's law and $\mu_{k,N} \to \mu_p$. To this end, we first review Weyl's law. Let $X\subset \mathbb{R}^d$ be a bounded domain with smooth boundary. Let $T$ be a positive real number, and let $N\left(T\right)$ be the number of Dirichlet Laplacian eigenvalues of $X$ less than $ T^2$ (counted with multiplicity). It was conjectured independently by Sommerfeld and Lorentz, based on the work of Rayleigh on the theory of sound, and proved by Weyl  \cite{Weyl1911} shortly after, that
$$
N\left(T\right)=c_d \text{vol}\left(X\right)T^{d} \left(1+o\left(1\right)\right) \quad  \text{ as } \lambda \to \infty,
$$
where $c_d$ is a constant depending only on $d$ and $\text{vol}\left(X\right)$ is the volume of $X$  in $\mathbb{R}^d$. This gives the distribution of the eigenvalues of the  Laplace--Beltrami operator as $T\to \infty$. As in Langlands' philosophy, this is analogous to the convergence of $\mu_{k,N}\to \mu_p$ giving the distribution of the Hecke eigenvalues as $k\to \infty.$

More generally, let $\left(M^d,g\right)$ be a compact smooth Riemannian manifold of dimension $d$. Let $N(T)$ be the number of eigenvalues of the Laplace--Beltrami operator $-\Delta_g$ less than $T^2$, counted with multiplicity. Then H\"ormander \cite{Lars} proved that
$$
N\left(T\right)= c_d \text{vol}\left(M\right)T^{d} +R_M\left(T\right),
$$
where $R_M\left(T\right)= O\left(T^{d-1}\right)$. In fact, this general estimate is sharp for the round sphere $M=S^d$. However, given a manifold $M$ the question of finding the optimal bound for the error term $R_M\left(T\right)$ is a very difficult problem. An analogue of $R_m(T)$ for $\mu_{k,N} \to \mu_p$ is the discrepancy $D(\mu_{k,N},\mu_p)$.

\begin{remark}
If $M$ is a symmetric space, then Weyl's law is formulated and expected to hold in great  generality for families of automorphic forms \cite[Conjecture 1]{Shin}.
\end{remark}

We now restrict to the case $d = 2$, and discuss the relation between the size of $R_M\left(T\right)$ and the geodesic flow on the unit cotangent bundle $S^*M$, predicted by the correspondence principle.  The two extreme behaviors that the geodesic flow can have are being chaotic or completely integrable, and in these two cases the correspondence principle predicts the distribution of eigenvalues to be modeled by a large random matrix, and a Poisson process, respectively \cite{Berry1,Berry2}.

In particular, we expect that for a generic $2$ dimensional flat torus or a compact arithmetic hyperbolic surface~\cite[Figure 1.3 and Section 3]{Arithmetic}\footnote{The geodesic flow in this case is chaotic, but Sarnak explains that one expects to see Poisson behavior due to the high multiplicity of the geodesic length spectrum.}, the set of eigenvalues inside the universal interval  $\left\lbrack T^2, \left(T+\frac{1}{L}\right)^2\right\rbrack$, where $\log T\ll L= o\left(T\right)$,\footnote{Here and elsewhere we write $A \ll_\tau B $ when $|A|\leq C(\tau)B$ holds with some constant $C(\tau)$ depending only on $\tau$.} is modeled  by Poisson process. For details, we refer the readers to the very interesting work of Rudnick \cite{Rudnick} and Sarnak's letter \cite{Sarnak3} explaining the critical window $\log T\ll L= o\left(T\right)$ using Kuznetsov's trace formula.  This suggests that these surfaces satisfy $R_M\left(T\right)= O_{\epsilon}\left(T^{\frac{1}{2}+\epsilon}\right)$.   In fact, Petridis and Toth proved that the average order of the error term in Weyl's law for a random torus chosen in a compact part of the moduli space of two dimensional tori is $R\left(T\right)=O_{\epsilon}\left(T^{\frac{1}{2}+\epsilon}\right)$ \cite{Toth}. Moreover, for compact arithmetic surfaces it was proved by Selberg \cite[p.315]{Hejhal} that $R\left(T\right)=\Omega\left(T^{\frac{1}{2}}/\log T\right)$. This bound is the analogue of~\eqref{gap}.

For the rational  torus $\mathbb{T}=\mathbb{R}^2/\mathbb{Z}^2$,  bounding $R_{\mathbb{T}}\left(T\right)$ is  equivalent to the classical  Gauss circle problem.  It was conjectured by Hardy that $R_{\mathbb{T}}\left(T\right)=O_{\epsilon}\left(T^{\frac{1}{2}+\epsilon}\right)$, and it is known by Hardy and Landau \cite{Hardy244} that $R_{\mathbb{T}}\left(T\right)=\Omega\left(T^{\frac{1}{2}} \left(\log T\right)^{\frac{1}{4}}\right)$. Note that the eigenvalue distribution here is known not to be Poisson \cite{Sarnaktori}.

As mentioned above, for generic compact hyperbolic surfaces, we expect the set of eigenvalues inside the interval $\left\lbrack T^2, \left(T+\frac{1}{L}\right)^2\right\rbrack$ to follow the eigenvalue distribution of a large symmetric matrix, which has a rigid structure. As a result, it is conjectured that these surfaces satisfy $R_M\left(T\right)=O_\epsilon \left(T^{\epsilon}\right)$.

Proving an optimal  upper bound for $R_M\left(T\right)$ is extremely difficult, and we do not have any explicit example of $M$ other than the sphere where the optimal bound is known!  The best known upper bound for hyperbolic manifolds is $R_M\left(T\right)=O\left(T^{d-1}/\log T\right)$, due to B\'erard \cite{Pierre}.  As pointed out by Sarnak \cite[p. 2]{Sarnak3}, even improving the constant and  showing that $R\left(T\right)=o\left(T/\log T\right)$ for the cuspidal spectrum  of $SL_2\left(\mathbb{Z}\right)\backslash \mathbb{H}$ (after removing the contribution of the Eisenstein series)  is  very difficult (Remark~\ref{errorm}). This bound is the analogue of~\eqref{upperweyl}.

\subsection{Main results}
\subsubsection{Large discrepancy for $\mu_{k,N}^*$}
Let $S_k\left(N\right)^*$ be the subspace of $S_k(N)$ consisting only of newforms of weight $k$ and fixed level $N$.  Let $\mathcal{T}_p^*$ be the restriction of $\mathcal{T}_p$ from $S_k\left(N\right)$ to $S_k\left(N\right)^*$. We denote by $\mu_{k,N}^*$ and $\nu_{k,N}^*$ the spectral probability  measures associated to $\mathcal{T}_p^*$, defined analogously to $\mu_{k,N}$ and $\nu_{k,N}$.

The main theorem of this paper is a generalization of \eqref{gap} to $\mu_{k,N}^*$ with any squarefree level $N$ and  an improved exponent of $k$ in the lower bound.

\begin{theorem}\label{rmwe}
Let $N\geq 1$ be a fixed square-free integer. Then there exists an infinite sequence of weights $\{ k_n\}$ with $k_n\to \infty$ such that
\[
D\left(\mu_{k_n,N}^*,\mu_p\right)\gg \frac{1}{k_n^{\frac{1}{3}}(\log k_n)^2}.
\]
\end{theorem}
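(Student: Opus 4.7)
The strategy is to combine the two technical inputs established earlier in the paper --- an explicit averaged trace formula for $\operatorname{Tr}(\mathcal{T}_n^*\,|\,S_k(N)^*)$ over a short interval of weights, and a bound for its second moment --- with a Koksma-type inequality that converts a single anomalous Chebyshev moment into a lower bound for the discrepancy.

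\textit{Step 1: produce a single weight at which the trace is anomalously large.} Given $K\to\infty$, I take $n=p^{j}$ with $j$ the largest integer such that $4\pi\sqrt{n}\le K$, so $j\sim 2\log K/\log p$ and $n\asymp K^{2}$. The Bessel function in the $c=1$ term of the Petersson formula satisfies $J_{k-1}(k)\asymp k^{-1/3}$ by the Airy asymptotic in the transition range $|4\pi\sqrt{n}-k|=o(k^{1/3})$. After Möbius inversion over levels to isolate newforms, the averaged explicit formula therefore carries an anomalous spike of order $\dim(S_k(N)^*)\,k^{-1/3}$ in a window of weights of length $\asymp k^{1/3}$, while the second moment bound over a longer window forbids this mass from being washed out by cancellation. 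A pigeonhole argument combining the two inputs then yields a weight $k_{n}$ with $|4\pi\sqrt{n}-k_{n}|=o(n^{1/6})$ and
\[
\bigl|\operatorname{Tr}(\mathcal{T}_n^*\,|\,S_{k_n}(N)^*)\bigr| \;\gg\; \dim\bigl(S_{k_n}(N)^*\bigr)\,k_{n}^{-1/3}.
\]

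\textit{Step 2: translate the large trace to a discrepancy bound.} The Hecke recursion gives $\lambda_f(p^{j})=U_{j}(\lambda_f(p)/2)$, where $U_{j}$ is the Chebyshev polynomial of the second kind. Dividing by the dimension,
\[
\int_{-2}^{2} U_{j}(x/2)\,d\mu_{k_n,N}^*(x) \;=\; \frac{\operatorname{Tr}(\mathcal{T}_n^*\,|\,S_{k_n}(N)^*)}{\dim(S_{k_n}(N)^*)}.
\]
A direct calculation substituting $x=2\cos\theta$ and expanding the Poisson kernel $(p+p^{-1}-2\cos 2\theta)^{-1}$ shows that $\int U_{j}(x/2)\,d\mu_p(x)=p^{-j/2}\mathbf{1}\{j\text{ even}\}=O(K^{-1})$, negligible next to $k_{n}^{-1/3}$. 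Hence
\[
\left|\int_{-2}^{2} U_{j}(x/2)\,d(\mu_{k_n,N}^*-\mu_p)\right| \;\gg\; k_{n}^{-1/3}.
\]
Since $U_{j}$ has $j$ local extrema each of size at most $j+1$, its total variation satisfies $V(U_{j}(\cdot/2))\ll j^{2}\ll(\log k_n)^{2}$. Koksma's inequality $|\int f\,d(\mu_1-\mu_2)|\le V(f)\,D(\mu_1,\mu_2)$ then gives
\[
D(\mu_{k_n,N}^*,\mu_p) \;\gg\; \frac{k_{n}^{-1/3}}{(\log k_n)^{2}},
\]
and letting $K$ run through an unbounded sequence produces the required infinite family of weights.

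\textit{Main obstacle.} The substance sits entirely in Step 1, and rests on the full strength of the paper's machinery: the explicit Petersson trace formula with square-root cancellation in the transition window (so that the $c=1$ Airy contribution is not swamped by the Kloosterman--Bessel sum over $c\ge 2$), together with a second moment estimate sharp enough to ensure that the Bessel spike cannot be distributed too thinly across weights by cancellation. Once Step 1 is secured, Steps 2 and 3 are essentially computational and pin down both the exponent $1/3$ (from the Airy asymptotic of $J_{k-1}(k)$) and the logarithmic loss $(\log k_n)^{2}$ (from the total variation of the degree-$\log k_n$ Chebyshev polynomial).
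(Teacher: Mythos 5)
Your proposal follows essentially the same route as the paper: Theorem~\ref{noweight} (the short-interval average of the trace, whose main term is the $J$-Bessel function in the transition range, of size $\gg K^{-1/3}$) produces by pigeonhole a weight $k_n$ near $4\pi\sqrt{p^{j}}$ at which $|\mathrm{Tr}\,\mathcal{T}_{p^j}^*|\gg \dim\left(S_{k_n}(N)^*\right)k_n^{-1/3}$, and your Chebyshev/Koksma step with $V\left(U_j\right)\ll j^2\ll(\log k_n)^2$ is exactly the paper's integration-by-parts argument. The only discrepancy is that your appeal to the second-moment bound in Step 1 is unnecessary and slightly miscast --- the paper's variance estimates serve only the weaker Corollary~\ref{cor1} with exponent $\tfrac12$ --- since the averaged formula alone already forces some weight in the window to carry a trace of the order of the average.
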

\begin{remark}\label{errorm}
As mentioned in the introduction the best known upper bound for $D\left(\mu_{k,N}^*,\mu_p\right)$ is
\begin{equation}\label{upb1}
D\left(\mu_{k,N}^*,\mu_p\right) = O\left( \frac{1}{\log k}\right),
 \end{equation}
by Murty and Sinha \cite{MS}. The standard method for giving an upper bound for the discrepancy of a sequence of points  is the Erd\H{o}s--Tur\'an inequality~\cite{Erd}. Even to improve the implied constant in \eqref{upb1} using the Erd\H{o}s--Tur\'an inequality, one needs to obtain a nontrivial upper bound for the trace of the Hecke operator $\mathcal{T}_n$ for $n\gg k^A$,
where $A>0$ is an arbitrarily large constant. But the error term in the  Selberg trace formula is very hard to bound non-trivially in this range and this makes the problem very difficult by this approach.
\end{remark}

Theorem \ref{rmwe} follows from an explicit asymptotic formula for the weighted average of the trace of the  Hecke operator in a short interval. More precisely, let
 $\psi$ be a non-negative smooth function supported in $\left\lbrack-1,1\right\rbrack$ that satisfies $\int_{-1}^{1}\psi\left(t\right)dt=1$. Let $\mathrm{Tr}~\mathcal{T}_n\left(k,N\right)^*$ be the trace of the  Hecke operator $\mathcal{T}_n^*$ on $S_k\left(N\right)^*$.

\begin{theorem}\label{noweight}
Let $N\geq 1$ be a fixed square-free integer, and  $\frac{1}{5}<\delta<\frac{1}{3}$ be a fixed constant. Let $K$ be an integer satisfying $K=4\pi\sqrt{n}+o\left(n^{\frac{1}{6}}\right)$. Then we have
\begin{multline*}
\frac{1}{K^{\delta}} \sum_{k\in 2\mathbb{N}} \psi \left(\frac{k-K}{K^{\delta}}\right)(-1)^{\frac{k}{2}}\mathrm{Tr}~\mathcal{T}_n\left(k,N\right)^*\\
= \frac{\mu\left(N\right)K}{2\pi}\frac{\sigma\left(n\right)}{n} J_K\left(4\pi\sqrt{n}\right)  \left(1+o_{\delta,\psi}\left(1\right)\right),
\end{multline*}
 where $J_K$ is the $J$-Bessel function (the Bessel function of the first kind) and $\mu$ is the M\"obius function.
 \begin{remark}\label{tranrem}
 By the asymptotic of the $J$-Bessel function in the transition range \eqref{trans} (see also \S\ref{Bessel}), we have $|J_K\left(4\pi\sqrt{n}\right)|\gg K^{-\frac{1}{3}}$. Hence, we have $| \mathrm{Tr}~\mathcal{T}_n\left(k,N\right)^*| \gg k^{\frac{2}{3}}$ for some $k\in [K-K^{\delta}, K+K^{\delta}]$. This lower bound violates the naive expected square root cancelation for the  eigenvalues of the Hecke operator $\mathcal{T}_n\left(k,N\right)^*$. However, we show that almost all $k$ in the range $\left\lbrack3\pi\sqrt{n}, 5\pi\sqrt{n}\right\rbrack$ satisfy
\(
\mathrm{Tr}~\mathcal{T}_n\left(k,N\right)^* =O_\epsilon \left(k^{\frac{1}{2}+\epsilon}\right);
\)
see Theorem~\ref{selbergmain1}.
\end{remark}

\end{theorem}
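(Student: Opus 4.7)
The plan is to apply the Eichler--Selberg trace formula for each divisor $M\mid N$, combine the results via the Möbius-type inversion that extracts newforms of level $N$ from the family $\{S_k(M)\}_{M\mid N}$, and then perform the $k$-average. The Möbius coefficients appearing in this step will eventually manifest themselves as the factor $\mu(N)$ in the main term. For each $M\mid N$, the normalized trace $n^{-(k-1)/2}\mathrm{Tr}\,T_n\,|_{S_k(M)}$ splits into identity, parabolic, hyperbolic, and elliptic contributions, the first three of which are all of size $O(1)$ after normalization (the hyperbolic term is in fact exponentially small in $k$ away from the diagonal divisor $d=\sqrt n$, and so absorbable into the error $o(K^{2/3})$). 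The dominant elliptic piece is
\[
-\frac{1}{\sqrt n}\sum_{|t|<2\sqrt n}\frac{\sin((k-1)\theta_t)}{\sin\theta_t}\,H_M(t,n),\qquad 2\sqrt n\cos\theta_t=t,
\]
where $H_M(t,n)$ is a Hurwitz-type class-number weight attached to level $M$.

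Next I would carry out the $k$-sum. Writing $k=2j$, Poisson summation in $j$ combined with the Schwartz decay of $\widehat\psi$ yields
\[
\frac{1}{K^\delta}\sum_{k\in 2\mathbb{N}}(-1)^{k/2}\psi\!\left(\tfrac{k-K}{K^\delta}\right)\sin\bigl((k-1)\theta_t\bigr) =\frac{(-1)^{K/2}}{2}\,\widehat\psi\!\bigl(\tfrac{K^\delta(\theta_t-\pi/2)}{\pi}\bigr)\sin\bigl((K-1)\theta_t\bigr)+O(K^{-A})
\]
for every $A>0$, so the effective range of $t$ is $|t|\ll\sqrt n/K^\delta$. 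After summing the Möbius combination $\sum_{M\mid N}\beta_{N,M}H_M(t,n)$---which collapses to $\mu(N)$ times a level-independent class-number weight by multiplicativity of the local Eichler factors---I apply Poisson summation in $t$ and identify the resulting oscillatory integral. The nontrivial Poisson frequency whose phase is stationary near $t=0$, after the change of variable $t=2\sqrt n\cos\theta$, produces, to leading order,
\[
\frac{\mu(N)K}{2\pi}\frac{\sigma(n)}{n}\cdot\frac{1}{\pi}\int_{0}^{\pi}\cos\bigl(K\theta-4\pi\sqrt n\sin\theta\bigr)\,d\theta=\frac{\mu(N)K}{2\pi}\frac{\sigma(n)}{n}\,J_K(4\pi\sqrt n),
\]
by the standard integral representation of the Bessel function; the prefactor $\sigma(n)/n$ emerges from the Kronecker--Hurwitz identity applied to the average of the class-number weights.

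The main obstacle is controlling the error when the class-number weights $H_M(t,n)$ are replaced by their smooth averages. This reduces to a square-root-cancellation estimate for $L(1,\chi_{t^2-4n})$ averaged over the interval $|t|\ll\sqrt n/K^\delta$ against the oscillatory kernel generated by the exponentials in the Poisson dual. The hypothesis $\delta>1/5$ is essentially the threshold at which this error drops below the main-term size $K^{2/3}$ in the Airy transition range for $J_K$, while the upper bound $\delta<1/3$ ensures that $J_{k-1}(4\pi\sqrt n)$ remains essentially constant across the $k$-window $[K-K^\delta,K+K^\delta]$ and that the quadratic Taylor error in $\theta_t=\pi/2-t/(2\sqrt n)+O(t^3n^{-3/2})$ is negligible. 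A parallel technical point is the Möbius bookkeeping, which must produce the coefficient $\mu(N)$ via the local factorization of $H_M(t,n)$ at primes dividing $N$ together with the multiplicativity of the Eichler--Selberg level weights $\beta_{N,M}$.
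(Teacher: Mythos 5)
Your route through the Eichler--Selberg formula is genuinely different from the paper's proof, which never touches class numbers for this theorem: the paper starts from the Petersson formula for $\Delta_{k,N}^*(m^2,n)$, removes the harmonic weights $|\rho_f(1)|^2$ by a Mellin-averaged sum over $m$ (at the cost of a second-moment bound for $L(\tfrac12+it,\mathrm{sym}^2 f)$), and then kills the off-diagonal Kloosterman--Bessel terms using the $k$-average (Lemma~\ref{avk}) together with Weil's bound. Your plan would avoid harmonic weights entirely, which is attractive, and your identification of the main term is sound: after Poisson summation in $k$ the elliptic sum localizes to $|t|\ll\sqrt n\,K^{-\delta}$, and the resonant Poisson frequency in $t$ (the derivative of $(K-1)\theta_t$ at $t=0$ is $\approx-2\pi$ precisely because $K\approx 4\pi\sqrt n$) reproduces the integral representation $J_K(4\pi\sqrt n)=\tfrac1\pi\int_0^\pi\cos(K\theta-4\pi\sqrt n\sin\theta)\,d\theta$.

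The gap is that the entire difficulty of the theorem is concentrated in the step you defer to a named but unproved estimate: square-root cancellation for the fluctuations of the class-number weights, i.e.\ of $L(1,\chi_{t^2-4n})$, against the oscillatory kernel over $|t|\ll\sqrt n\,K^{-\delta}$. If you open $L(1,\chi_{t^2-4n})$ as a Dirichlet series and interchange summations, the inner $t$-sums become Sali\'e/Kloosterman-type character sums twisted by $e^{i(K-1)\theta_t}$, and after the second Poisson summation you regenerate exactly the off-diagonal terms $\sum_{c\ge 2}c^{-1}S(n,1;c)J_{k-1}(4\pi\sqrt n/c)$ of the Petersson formula --- the very object that Section~\ref{rmweight} spends its length controlling. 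So the hard part has been relabeled, not removed. Moreover, the threshold $\delta>\tfrac15$ in the paper does not arise from any class-number or Kloosterman error: it comes from balancing the term $T^{-1/2}K^{-1-\delta+\epsilon}\sum_k\mathcal{M}_1(k)\ll K^{-\frac1{12}-\frac54\delta+\epsilon}$ (the second moment of symmetric-square $L$-functions needed to remove harmonic weights) against the main term of size $K^{-1/3}$, a mechanism absent from your approach; your assertion that $\tfrac15$ is ``essentially the threshold'' for your error term is therefore unsupported and would require an independent proof. Two secondary issues: the weights $\tilde\mu(t,f,n,N)$ do not collapse to $\sigma(N)\mu(N)$ uniformly in $t$ (the local counts $M(t,n,\cdot)$ depend on $t$ modulo the primes dividing $N$; the computation in Section~\ref{selberg} establishes the collapse only for special residue classes), so extracting the coefficient $\mu(N)$ requires a genuine local-density computation averaged over $t$; and the hyperbolic term is not exponentially small when $n$ has a divisor $d$ with $d/\sqrt n=1-O(1/\sqrt n)$, although it is still $O_\epsilon(n^\epsilon)$ and hence harmless against the main term of size $K^{2/3}$.
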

We give the proof of the above theorem in Section~\ref{rmweight}. The proof is based on the Petersson trace formula and the proof of Theorem~\ref{maint} that we give  in Section~\ref{peters}.  The main term of the above formula comes from the $J$-Bessel function in the transition range. Next,  we simplify the error term by using bounds on the $J$-Bessel function outside the transition range. For the remaining error terms,  we average over weights and  apply the Poisson summation formula and obtain a sum of the Kloosterman sums twisted by oscillatory integrals.  Theorem~\ref{noweight} subsequently  follows by using Weil's bound for the Kloosterman sums and by exploiting the cancellation coming from the  summation over the Bessel functions (Section~\ref{secwe}). There are some similarities between our method and the circle method, especially the version developed by Heath-Brown~\cite{Heath}.

%\left(We will discuss the proof in the following section.\right)

\subsubsection{Variance of the trace}
If we consider the variance of the trace of the Hecke operator over $k\sim \sqrt{n}$, the largeness of the trace in Theorem \ref{noweight} is no longer present. To be precise, we have the following  results.
\begin{theorem}\label{selbergmain1}
Let $N>1$ be a squarefree integer. For any positive integer $n$, we have
\[
\sum_{\substack{ k\in 2\mathbb{Z}\\3\pi\sqrt{n}<k < 5\pi\sqrt{n}}} \left|\mathrm{Tr}~\mathcal{T}_n\left(k,N\right)^*- \frac{k-1}{12} \varphi \left(N\right)\frac{\delta\left(n,\square\right)}{\sqrt{n}}\right|^2 \ll_{N} n\left(\log n\right)^2\left(\log\log n\right)^4,
\]
where $\delta\left(n,\square\right)=1$ if $n$ is a square, and $0$ otherwise. Here $\varphi$ is Euler's totient function. In particular, almost all $k$ in the range $\left\lbrack3\pi\sqrt{n}, 5\pi\sqrt{n}\right\rbrack$ satisfy
\[
\mathrm{Tr}~\mathcal{T}_n\left(k,N\right)^* =O_\epsilon \left(k^{\frac{1}{2}+\epsilon}\right).
\]
\end{theorem}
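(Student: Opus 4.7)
The plan is to apply the Eichler-Selberg trace formula for $\mathcal T_n^*$ on $S_k(N)^*$. Since $N$ is squarefree, Atkin-Lehner theory expresses the newform trace as a M\"obius-type inversion over divisors $M\mid N$ of the Eichler-Selberg trace on $S_k(M)$, and the identity contributions aggregate, via the squarefree identity $\sum_{M\mid N}\beta(N/M)\psi(M)=\varphi(N)$ (where $\beta$ is multiplicative with $\beta(p)=-2$ and $\psi(M)=[SL_2(\mathbb Z):\Gamma_0(M)]$), to the claimed main term $\frac{k-1}{12}\varphi(N)\delta(n,\square)/\sqrt n$. After subtracting this, the dominant remainder is the elliptic contribution
\[
E_k \;=\; -\frac{1}{2\sqrt n}\sum_{\substack{t\in\mathbb Z\\ t^2<4n}}\frac{\sin((k-1)\theta_t)}{\sin\theta_t}\,\mathcal H_N(t,n),\qquad \cos\theta_t=\frac{t}{2\sqrt n},
\]
where $\mathcal H_N(t,n)$ packages Hurwitz class numbers $H(4n-t^2)$ (summed over conductors) together with bounded local factors at primes dividing $N$. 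The hyperbolic term has size $O\!\bigl(\sum_{d\mid n,\, d<\sqrt n}(d/\sqrt n)^{k-1}\bigr)$, which is exponentially small in the window $k\geq 3\pi\sqrt n$ and is negligible.

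To bound the second moment I would expand $|E_k|^2$ and sum over $k\in[3\pi\sqrt n,5\pi\sqrt n]\cap 2\mathbb Z$, using the product-to-sum identity
\[
\frac{\sin((k-1)\theta_1)\sin((k-1)\theta_2)}{\sin\theta_1\sin\theta_2} \;=\; \frac{\cos((k-1)(\theta_1-\theta_2))-\cos((k-1)(\theta_1+\theta_2))}{2\sin\theta_1\sin\theta_2},
\]
so that the inner sum over $k$ reduces to a geometric series, yielding the bound $\ll \min\bigl(\sqrt n,\,|\sin((\theta_1\mp\theta_2)/2)|^{-1}\bigr)$. The diagonal $t_1=\pm t_2$ then contributes
\[
\frac{\sqrt n}{n}\sum_{|t|<2\sqrt n}\frac{|\mathcal H_N(t,n)|^2}{\sin^2\theta_t}\;\ll\;\sqrt n\sum_{|t|<2\sqrt n}\frac{H(4n-t^2)^2}{4n-t^2},
\]
and the effective class-number bound $H(d)\ll \sqrt d\,\log d\,(\log\log d)^2$ converts this into $O(n(\log n)^2(\log\log n)^4)$, already matching the theorem.

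The main obstacle is the off-diagonal $t_1\neq\pm t_2$: after extracting the savings $|\sin((\theta_1\mp\theta_2)/2)|^{-1}$ from the $k$-sum, the surviving quantity is
\[
\frac{1}{n}\sum_{t_1\neq\pm t_2}\frac{|\mathcal H_N(t_1,n)\,\mathcal H_N(t_2,n)|}{\sin\theta_1\sin\theta_2\,|\sin((\theta_1\mp\theta_2)/2)|},
\]
which I would attack by dyadic decomposition in $|\theta_1-\theta_2|$, using the elementary counting estimate $\#\{t_2:|\theta_{t_2}-\theta_{t_1}|\asymp\Delta\}\ll \Delta\sqrt{4n-t_1^2}+1$ together with the standard first-moment bound $\sum_{|t|<2\sqrt n}H(4n-t^2)\ll n\log n$. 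The $N$-dependence is absorbed into an $O_N(1)$ constant because the local factors at primes dividing $N$ are uniformly bounded. Carrying out this double-sum estimate, with the class-number logarithmic losses on each side, is the technical heart of the proof and produces the bound $\ll_N n(\log n)^2(\log\log n)^4$. The "in particular" conclusion is then immediate from Chebyshev's inequality applied to this $L^2$ estimate, since the window has length $\gg\sqrt n$.
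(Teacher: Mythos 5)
Your overall strategy (Eichler--Selberg for the newform trace via Atkin--Lehner, isolate the elliptic term, open the square, sum over $k$) matches the paper's, and your diagonal analysis is exactly right: the pointwise Hurwitz class number bound $H(d)\ll \sqrt{d}\,\log d\,(\log\log d)^2$ gives $\sum_{t^2<4n}|D_N(t,n)|^2\ll \sqrt{n}(\log n)^2(\log\log n)^4$, and multiplying by the $\asymp\sqrt n$ values of $k$ yields the stated bound. The gap is in the off-diagonal. The paper never confronts it: it majorizes the sharp sum over $3\pi\sqrt n<k<5\pi\sqrt n$ by a smooth sum $\sum_k\phi(\frac{k-1}{T})|\cdot|^2$ with $T\geq\sqrt n$, where $\phi>0$ has Fourier transform supported in $[-\frac{1}{100},\frac{1}{100}]$; then Poisson summation shows $\sum_{k\in2\mathbb Z}\phi(\frac{k-1}{T})e^{i(k-1)\theta}$ vanishes \emph{identically} for all $\theta\in[\frac{1}{2\sqrt n},\pi-\frac{1}{2\sqrt n}]$, and a short computation (their Lemma on the angles $\theta_{t,n}$) shows every off-diagonal frequency $\theta_{t_1,n}\pm\theta_{t_2,n}$ lies in that range modulo $\pi$. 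So the off-diagonal is exactly zero, with no class-number input needed there at all.

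Your sharp-cutoff geometric-series route does not reproduce this. After extracting $\min(\sqrt n,|\theta_1-\theta_2|^{-1})$ from the $k$-sum, the harmonic sum $\sum_{t_2\neq t_1}|\theta_{t_1}-\theta_{t_2}|^{-1}$ is genuinely of size $\asymp\sqrt{n}\log n$ (the angles are $\asymp 1/\sqrt n$-spaced in the bulk), so combining it with either the pointwise bound $|D_N(t,n)|\ll\log n(\log\log n)^2$ or a Schur-test/Cauchy--Schwarz argument using $\sum_t|D_N(t,n)|^2\ll\sqrt n(\log n)^2(\log\log n)^4$ gives an off-diagonal contribution of order $n(\log n)^3(\log\log n)^4$ --- one logarithm worse than the theorem. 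The first-moment bound $\sum_t H(4n-t^2)\ll\sigma_1(n)$ does not rescue this, since the loss comes from the harmonic sum over gaps, not from the class numbers. You also explicitly defer this double-sum estimate as ``the technical heart'' without executing it, so as written the proposal proves only the weaker bound (which would still suffice for the ``in particular'' statement, but not for the theorem as stated). To close the gap you should replace the sharp cutoff by a band-limited positive majorant as in the paper, which makes the off-diagonal vanish outright.
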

We also prove a lower bound for the variance of the trace of the Hecke operator. To make a precise statement, let $\phi$ be a positive even rapidly decaying function whose Fourier transform $\hat{\phi}$ is supported in $\left\lbrack-\frac{1}{100},\frac{1}{100}\right\rbrack$.
\begin{theorem}\label{selbergmain2}
Let $N>1$ be a squarefree integer and let $n=p^m$, where $p$ is an odd prime.  There exists a sufficiently large fixed constant $A>0$ such that for any $K>A\sqrt{n}$, we have
\begin{equation}\label{selberg2}
\frac{1}{\sum_{k\in 2\mathbb{Z}} \phi\left(\frac{k-1}{K}\right) }\sum_{k>0, k\in 2\mathbb{Z}} \phi\left(\frac{k-1}{K}\right) \left|\mathrm{Tr}~\mathcal{T}_n\left(k,N\right)^*- \frac{k-1}{12} \varphi \left(N\right)\frac{\delta\left(n,\square\right)}{\sqrt{n}}\right|^2 \gg_N n^{\frac{1}{2}}.
\end{equation}
\end{theorem}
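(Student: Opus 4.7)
The plan is to apply the Eichler--Selberg trace formula to isolate the elliptic part of
\[
E(k) := \mathrm{Tr}~\mathcal{T}_n(k,N)^* - \tfrac{k-1}{12}\varphi(N)\tfrac{\delta(n,\square)}{\sqrt{n}},
\]
then compute the weighted second moment in $k$ by opening the square and using the compact Fourier support of $\phi$ to reduce to a single diagonal whose size produces the desired lower bound.

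First, the trace formula on $S_k(N)$ combined with M\"obius inversion over divisors of the squarefree $N$ (clean since $(n,N)=1$) gives
\[
E(k) = -\frac{1}{2\sqrt{n}}\sum_{t^2<4n}\frac{\sin((k-1)\theta_t)}{\sin\theta_t}\,a_N(t,n) + R_k,
\]
where $\theta_t:=\arccos(t/(2\sqrt{n}))$ and $a_N(t,n)$ is a newform-weighted Hurwitz class number coming from the Möbius projection to $S_k(N)^*$. The parabolic/hyperbolic pieces $R_k$ decay exponentially as $k\to\infty$ relative to $\sqrt{n}$ (each contributes $(d/\sqrt{n})^{k-1}$ for $d\mid n$, $d<\sqrt{n}$), so for $K>A\sqrt{n}$ they are negligible both in $L^\infty$ and in cross terms.

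Next, expand $|E(k)|^2$ using $\sin A\sin B=\tfrac12(\cos(A-B)-\cos(A+B))$ and apply Poisson summation in $k$:
\[
\sum_{\substack{k\in 2\mathbb{Z}\\ k>0}}\phi\!\left(\tfrac{k-1}{K}\right)\cos((k-1)\beta) \;=\; \tfrac{K}{4}\,\hat\phi\!\left(\tfrac{K\beta}{2\pi}\right) + O(K^{-\infty}).
\]
Since $\operatorname{supp}\hat\phi\subset[-\tfrac{1}{100},\tfrac{1}{100}]$, only pairs $(t_1,t_2)$ with $|\theta_{t_1}\pm\theta_{t_2}|\lesssim 1/K$ contribute. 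Taking $K>A\sqrt{n}$ with $A$ large, the $\gg 1/\sqrt{n}$ spacing of the bulk $\theta_t$'s forces $t_1=t_2$ on the difference branch; the sum branch $\theta_{t_1}+\theta_{t_2}$ never resonates in the bulk since both angles are bounded away from $0$; and edge contributions $|t|\to 2\sqrt{n}$ where $\sin\theta_t$ is small are absorbed into an $O_N(n^{1/2-\delta})$ error using the trivial bound $a_N(t,n)\ll n^{1/2+\varepsilon}$. Normalizing by $\sum_k\phi((k-1)/K)\asymp_\phi K$, the surviving diagonal is at least
\[
\gg_\phi \frac{1}{n}\sum_{|t|<\sqrt{3n}} a_N(t,n)^2.
\]

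To conclude, Cauchy--Schwarz applied to the first-moment lower bound $\sum_{|t|<\sqrt{3n}} a_N(t,n)\gg_N n$ (a version of Eichler's mass formula at squarefree level, using that $\sigma(p^m)\asymp p^m$ and that a positive proportion of the total mass comes from $|t|<\sqrt{3n}$) yields
\[
\sum_{|t|<\sqrt{3n}} a_N(t,n)^2 \;\geq\; \frac{\bigl(\sum_{|t|<\sqrt{3n}} a_N(t,n)\bigr)^2}{\#\{|t|<\sqrt{3n}\}} \;\gg_N\; \frac{n^2}{\sqrt{n}}=n^{3/2},
\]
giving the variance lower bound $\gg_N n^{1/2}$. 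The main obstacle I foresee is the off-diagonal separation in the Poisson-summed square: quantitatively controlling the edge region where $\sin\theta_t$ is small and the sum-branch $\theta_{t_1}+\theta_{t_2}$ near-resonances, and verifying that the M\"obius inversion from $S_k(N)$ to $S_k(N)^*$ preserves enough positivity (or boundedness below in absolute value) of $a_N(t,n)$ on a positive proportion of bulk $t$ to make the Cauchy--Schwarz step sharp.
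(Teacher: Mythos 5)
Your overall architecture matches the paper's: isolate the elliptic term via Eichler--Selberg and M\"obius inversion, use the compact support of $\hat\phi$ to kill all off-diagonal pairs (the paper's spacing lemma, $\theta_{t_1}\pm\theta_{t_2}\in[\tfrac{1}{2\sqrt n},\pi-\tfrac{1}{2\sqrt n}]$ mod $\pi$, makes this an \emph{exact} vanishing for $K\geq\sqrt n$, not merely $O(K^{-\infty})$), and then pass from a first moment of class numbers to a second moment by Cauchy--Schwarz. One small inaccuracy: the sum branch \emph{does} resonate exactly at $t_1=-t_2$, where $\theta_{t_1}+\theta_{t_2}=\pi$; by the antisymmetry $D_N(-t,n)=-D_N(t,n)$ this folds into the diagonal with a favorable sign, so it only changes the constant. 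You also need to subtract the $k=2$ contribution, which for $n=p^m$ is of size $\sigma_1(n)^2/n\asymp n$ and is why the theorem requires $K>A\sqrt n$ with $A$ large; your write-up omits this term.

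The genuine gap is the first-moment input $\sum_{|t|<\sqrt{3n}}a_N(t,n)\gg_N n$, which you assert as ``a version of Eichler's mass formula'' and whose positivity problem you flag but do not resolve. The newform weights $\tilde{\mu}(t,f,n,N)=\sum_{d\mid N}\sigma(N/d)\mu(N/d)\mu(t,f,n,d)$ are signed and vary with $t$ and $f$, so neither the Hurwitz--Kronecker relation $\sum_t H(4n-t^2)\asymp\sigma_1(n)$ nor any unrestricted mass formula bounds $\sum_t|a_N(t,n)|$ from below: cancellation inside $\sum_f h_w\tilde{\mu}$ is a priori possible. The paper resolves this in two steps that your proposal is missing. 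First, using that $n$ is odd, it fixes a residue $n_0\bmod 2N$ with $\left(\frac{n_0^2-4n}{p}\right)=-1$ for all odd $p\mid N$, which forces $\tilde{\mu}(t,f,n,N)=\sigma(N)\mu(N)$, a nonzero constant, for every $t\equiv n_0\pmod{2N}$; on this progression $|a_N(t,n)|$ is exactly $\sigma(N)$ times the Hurwitz class number $H(4n-t^2)$, and the sum of squares can be restricted to it by positivity. Second, the congruence-restricted first moment $\sum_{t\equiv n_0\,(2N)}H(4n-t^2)\gg_N n$ is not a classical class number relation: the paper obtains it from Gauss's formula relating $H$ to $r_3$, reducing to counting lattice points on $t^2+x^2+y^2+z^2=4n$ with $t\equiv n_0\pmod{2N}$, and then invoking Kloosterman's circle method for quaternary quadratic forms. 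Without these two steps your Cauchy--Schwarz has no valid first-moment lower bound to square.
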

This immediately implies the following weaker version of Theorem~\ref{rmwe}.
\begin{corollary}\label{cor1}
Let $N>1$ be a fixed square-free integer and let $p$ be an odd prime. Then we have
\[
D\left(\mu_{k,N}^*,\mu_p\right) =\Omega \left(\frac{1}{k^{\frac{1}{2}}(\log k)^2}\right).
\]
\end{corollary}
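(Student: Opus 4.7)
\textbf{Proof proposal for Corollary~\ref{cor1}.}
My plan is to convert the variance lower bound of Theorem~\ref{selbergmain2} into a pointwise trace anomaly via pigeonhole, and then translate that anomaly into a lower bound for the discrepancy by applying Koksma's inequality to a suitable Chebyshev polynomial.

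Given a large parameter $K$, I would take $m$ to be the largest integer with $Ap^m \le K^2$ and set $n := p^m$, so that $n \asymp K^2$ and $A\sqrt{n} \le K$, which is the hypothesis of Theorem~\ref{selbergmain2}. That theorem then yields
\[
\sum_{k>0,\,k\in 2\mathbb{Z}} \phi\!\left(\frac{k-1}{K}\right) \left|\mathrm{Tr}\,\mathcal{T}_n(k,N)^* - \tfrac{k-1}{12}\varphi(N)\tfrac{\delta(n,\square)}{\sqrt{n}}\right|^2 \gg_N K\,n^{1/2}.
\]
Since $\phi$ is positive and bounded below by a constant on some fixed interval, the left-hand side is effectively supported on $\asymp K$ integers $k\asymp K$. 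Absorbing the rapidly decaying tails of $\phi$ by Deligne's bound $|\mathrm{Tr}\,\mathcal{T}_n(k,N)^*|\ll k\sigma(n)$, a standard pigeonhole argument then produces an even $k^*\asymp K$ with
\[
\left|\mathrm{Tr}\,\mathcal{T}_n(k^*,N)^* - \tfrac{k^*-1}{12}\varphi(N)\tfrac{\delta(n,\square)}{\sqrt{n}}\right| \gg_N n^{1/4} \asymp K^{1/2}.
\]

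Next I would rephrase this as a difference of moments. Using $\lambda_f(p^m)=U_m(\lambda_f(p)/2)$, where $U_m$ is the Chebyshev polynomial of the second kind, one has
\[
\int_{-2}^{2} U_m(x/2)\, d\mu_{k^*,N}^*(x) = \frac{\mathrm{Tr}\,\mathcal{T}_n(k^*,N)^*}{\dim S_{k^*}(N)^*},\qquad \int_{-2}^{2} U_m(x/2)\, d\mu_p(x) = \frac{\delta(n,\square)}{\sqrt{n}},
\]
the second identity being the content of Serre's equidistribution combined with the main term of the Eichler--Selberg trace formula. Combining with $\dim S_{k^*}(N)^* = \tfrac{k^*-1}{12}\varphi(N)+O_N(1)$ for squarefree $N$ and dividing the previous display by this dimension, I obtain
\[
\left| \int_{-2}^{2} U_m(x/2)\, d\mu_{k^*,N}^*(x) - \int_{-2}^{2} U_m(x/2)\, d\mu_p(x) \right| \gg_N (k^*)^{-1/2},
\]
after checking that the secondary term arising from the mismatch between $\dim S_{k^*}(N)^*$ and $\tfrac{k^*-1}{12}\varphi(N)$ contributes only $O_N((k^*)^{-1})$, which is swamped by $K^{-1/2}$.

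Finally I would apply the Koksma inequality
\[
\left|\int f\, d\mu_1 - \int f\, d\mu_2\right| \le V(f)\, D(\mu_1,\mu_2)
\]
to $f(x)=U_m(x/2)$ and the measures $\mu_{k^*,N}^*$, $\mu_p$ on $[-2,2]$. Since $\|U_m\|_\infty\le m+1$ and $U_m$ has $O(m)$ monotone pieces on $[-1,1]$, $V(U_m(\cdot/2))\ll m^2$; and $m \asymp \log K/\log p \asymp \log k^*$. Combining these gives
\[
D(\mu_{k^*,N}^*,\mu_p) \gg \frac{(k^*)^{-1/2}}{m^2} \gg \frac{1}{\sqrt{k^*}\,(\log k^*)^2}.
\]
Letting $K$ range over an unbounded sequence produces infinitely many distinct even weights $k^*$ satisfying this inequality, which is the asserted $\Omega$-bound. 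The main technical obstacle is the identification of constants in the third step: verifying that the ``main term divided by dimension'' really converges to $\int U_m(x/2)\, d\mu_p(x)$ with an error $O_N(1/k^*)$ uniform in $m$ up to the range $m \asymp \log k^*$. Past this normalization, each step is routine.
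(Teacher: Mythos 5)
Your overall strategy --- feed the variance lower bound of Theorem~\ref{selbergmain2} into a Koksma/Chebyshev-polynomial inequality relating the trace anomaly to the discrepancy --- is the same as the paper's (which quotes the inequality $\bigl|\mathrm{Tr}\,\mathcal{T}_{p^m}^*-|B_{k,N}^*|\delta(n,\square)n^{-1/2}\bigr|\le 2m^2|B_{k,N}^*|D(\mu_{k,N}^*,\mu_p)$ from Gamburd--Jakobson--Sarnak in place of your explicit Koksma step), and your normalization of the main term and the bound $V(U_m(\cdot/2))\ll m^2$ are fine. The genuine gap is in the pigeonhole step, specifically in localizing $k^*$ to $k^*\asymp K$. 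This localization is not cosmetic: you need $n^{1/4}\gg (k^*)^{1/2}$ and $m\asymp\log k^*$, so an upper bound $k^*\ll K$ with an absolute implied constant is essential for the exponent $\tfrac12$ and the power of $\log$. But $\phi$ here is \emph{not} compactly supported (it is positive with $\hat\phi$ compactly supported, hence supported on all of $\mathbb{R}$), and Deligne's bound gives only $|\mathrm{Tr}\,\mathcal{T}_n(k,N)^*-\tfrac{k-1}{12}\varphi(N)\delta(n,\square)n^{-1/2}|^2\ll k^2n^{\epsilon}$, so the tail you propose to ``absorb'' satisfies
\[
\sum_{k>CK}\phi\Bigl(\tfrac{k-1}{K}\Bigr)k^2 \asymp K^3\int_C^{\infty}\phi(u)u^2\,du ,
\]
which for any \emph{fixed} $C$ is $\asymp_C K^3$ and therefore dwarfs the main term $Kn^{1/2}\asymp K^2$. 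The uniform bound $|B_2(n,k,N)|\ll_N\sigma_1(n)$ does no better. Consequently you cannot rule out, with the tools you invoke, that essentially all of the second moment lives at $k\in[CK,K^{1+\delta}]$; restricting to that range costs you a factor $K^{\delta}$ (or at best a power of $\log K$) in the final exponent, and the claimed bound $\Omega(k^{-1/2}(\log k)^{-2})$ is not reached.

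The clean repair is exactly what the paper does: do not extract a single $k^*$ against the fixed threshold $n^{1/4}$, but argue by contradiction against the \emph{$k$-dependent} threshold. Assuming $D(\mu_{k,N}^*,\mu_p)=o\bigl(k^{-1/2}(\log k)^{-2}\bigr)$, each summand is $o\bigl(\phi(\tfrac{k-1}{K})\,m^4 k(\log k)^{-4}\bigr)$ --- only \emph{linear} in $k$ because one factor of $|B_{k,N}^*|\asymp k$ is cancelled by $D^2$ --- and $\sum_k\phi(\tfrac{k-1}{K})\,k\asymp K^2$ converges harmlessly including its tail, yielding $o(\sqrt{n})$ and contradicting Theorem~\ref{selbergmain2}. (Alternatively, one could patch your pigeonhole by invoking the dyadic second-moment \emph{upper} bound coming from Lemma~\ref{sufficient11} and Theorem~\ref{arith} at scales $T=2^jCK$, but even that leaves a small logarithmic loss.) As written, the step ``a standard pigeonhole argument then produces an even $k^*\asymp K$'' is the missing piece of the proof.
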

\begin{remark}
Note that this generalizes \cite{Gamburd} to any square-free level $N>1$.
\end{remark}
Theorem \ref{selbergmain1} and \ref{selbergmain2} are consequences of the following asymptotic formula, which we derive from the Eichler--Selberg trace formula for $T\geq \sqrt{n}$ and $N>1$ (Lemma \ref{sufficient11}):
\begin{multline}\label{suff}
\sum_{k>0, k\in 2\mathbb{Z}} \phi\left(\frac{k-1}{T}\right) \left|\mathrm{Tr}~\mathcal{T}_n\left(k,N\right)^*- \frac{k-1}{12} \varphi \left(N\right)\frac{\delta\left(n,\square\right)}{\sqrt{n}}\right|^2\\
= 2\sum_{k\in 2\mathbb{Z}} \phi\left(\frac{k-1}{T}\right) \sum_{t^2<4n}|D_N\left(t,n\right)|^2 - \phi\left(\frac{1}{T}\right)\frac{\sigma_1\left(n\right)^2}{n}   + O\left(n^{\frac{1}{2}+\epsilon}\right).
\end{multline}
Here $D_N\left(t,n\right)$ is a weighted sum of class numbers:
\[
D_N\left(t,n\right) = \frac{i}{2\sqrt{4n-t^2}} \sum_{f} h_w \left(\frac{t^2-4n}{f^2}\right) \tilde{\mu}\left(t,f,n,N\right),
\]
with weights $|\tilde{\mu}\left(t,f,n,N\right)| = O_N\left(1\right)$ (for the precise definition, see Lemma \ref{selbergnew}).

The upper bound (Theorem \ref{selbergmain1}) then follows by applying a standard upper bound for the class numbers of imaginary quadratic fields.

Note that inputting the sharp lower bound for the class numbers of imaginary quadratic fields,
\[
h_w\left(-d\right) \gg_\epsilon d^{\frac{1}{2}-\epsilon},
\]
to \eqref{suff} is not sufficient to prove the lower bound in Theorem \ref{selbergmain2}. Therefore we relate the problem of estimating the sparse sum of sums of class numbers
\[
\sum_{t^2<4n}|D_N\left(t,n\right)|^2
\]
to the problem of counting integral lattice points on $3$-spheres, under certain congruence conditions on the coordinates. This can be done by following the circle method developed by Kloosterman \cite{Kloosterman}, and we are able to show that
\[
\sum_{t^2<4n}|D_N\left(t,n\right)|^2 \gg_N \sqrt{n},
\]
under the assumption that $n$ is odd (Theorem \ref{arith}). Now if $n=p^m$ for a fixed odd prime $p$, and if $T > A \sqrt{n}$ for some large $A$, we see that
\[
2\sum_{k\in 2\mathbb{Z}} \phi\left(\frac{k-1}{T}\right) \sum_{t^2<4n}|D_N\left(t,n\right)|^2
\]
is larger than $\phi\left(\frac{1}{T}\right)\frac{\sigma_1\left(n\right)^2}{n} \gg n$, from which Theorem \ref{selbergmain2} follows. These steps are carried out in Section \ref{selberg}.

\subsubsection{Large discrepancy for the measure with harmonic weights}

Next, we give our results on the error term of the Weyl law associated to the measures $\nu_{k,N}^*$ as $k \to \infty$.

\begin{theorem}\label{weylpeter} Let $N\geq 1$ be a fixed square-free integer.
There exists an infinite sequence of weights $\{ k_n\}$ with $k_n\to \infty$ such that
\begin{equation}
D\left(\nu_{k_n,N}^*, \mu_{\infty}\right) \gg \frac{1}{k_n^{\frac{1}{3}}(\log k_n)^2}.
\end{equation}
\end{theorem}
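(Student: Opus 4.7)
The plan is to mirror the passage from Theorem~\ref{noweight} to Theorem~\ref{rmwe}, but play the Petersson trace formula directly against the harmonic weights that define $\nu_{k,N}^*$. Because $\omega_f := \Gamma(k-1)(4\pi)^{1-k}|\rho_f(1)|^2$ is precisely the weight produced by Petersson, this bypasses the Eichler--Selberg identity and the Poisson/Kloosterman averaging of Section~\ref{secwe} entirely; the only real work is to select the right test polynomial, locate the Bessel transition range, and invoke Erd\H{o}s--Tur\'an. As test functions I take Chebyshev polynomials of the second kind: fix a large integer $j$, set $n=p^j$, and recall that $\lambda_f(p^j) = U_j(\lambda_f(p)/2)$ by Hecke multiplicativity, while $\int U_j(\lambda/2)\, d\mu_\infty = 0$ for $j\geq 1$ because $\mu_\infty$ is the Sato--Tate measure. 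Hence
\[
\int U_j(\lambda/2)\, d\bigl(\nu_{k,N}^*-\mu_\infty\bigr)(\lambda) \;=\; \sum_{f\in B_{k,N}^*}\omega_f\,\lambda_f(p^j).
\]

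Applying the newform Petersson trace formula (the M\"obius-inverted version derived in Section~\ref{peters}) to the right-hand side, for square-free $N$ coprime to $p$ one obtains
\[
\sum_{f\in B_{k,N}^*}\omega_f\,\lambda_f(p^j) \;=\; \alpha(N,p)\, i^{-k}\, J_{k-1}\!\bigl(4\pi p^{j/2}\bigr) + \mathcal{E}_{k,j,N},
\]
where $\alpha(N,p)\neq 0$ collects the M\"obius/Atkin--Lehner factors at level $N$ and $\mathcal{E}_{k,j,N}$ is the tail over moduli $c\geq 2$ with $N\mid c$. Now select $k$ with $|(k-1)-4\pi p^{j/2}| = O(1)$, so that the leading Bessel sits in the transition regime; by the asymptotic recalled in Remark~\ref{tranrem}, $|J_{k-1}(4\pi p^{j/2})| \gg k^{-1/3}$. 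For $c\geq 2$ the argument $4\pi p^{j/2}/c$ lies well below the index $k-1$, i.e.\ in the exponentially decaying range of $J_{k-1}$, so $|\mathcal{E}_{k,j,N}| = O(e^{-c'k})$ and does not compete. After shifting $k$ by $2$ or $4$ if necessary so that $k\bmod 4$ matches the sign of the real value $J_{k-1}(4\pi p^{j/2})$, I obtain an infinite sequence $k_n\to\infty$ with exponents $j_n = O(\log k_n)$ and
\[
\biggl|\int U_{j_n}(\lambda/2)\, d\bigl(\nu_{k_n,N}^*-\mu_\infty\bigr)(\lambda)\biggr| \;\gg\; \frac{1}{k_n^{1/3}}.
\]

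Since $\|U_{j_n}\|_\infty \leq j_n+1 \ll \log k_n$, the Erd\H{o}s--Tur\'an inequality (as invoked in \cite{MS} and in the proof of Theorem~\ref{rmwe}) converts this moment lower bound into
\[
D\bigl(\nu_{k_n,N}^*,\mu_\infty\bigr) \;\gg\; \frac{1}{k_n^{1/3}(\log k_n)^2},
\]
one logarithm coming from the degree of the test polynomial and the other from the standard Erd\H{o}s--Tur\'an constant. The only subtle point, and thus the expected main obstacle, is ensuring that the leading Bessel term does not accidentally cancel: $J_{k-1}(4\pi p^{j/2})$ is real in the transition regime but $i^{-k}$ is only a fourth root of unity, so one must match $k \bmod 4$ to the sign of this transition value in order to read off a genuine lower bound. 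If this parity bookkeeping turns out to be fragile, one can instead average $k$ over a short interval of length $k^{\delta}$ exactly as in Theorem~\ref{noweight} and pigeonhole out a single good $k_n$, which costs nothing in the final exponent.
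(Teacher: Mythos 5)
Your route is the same as the paper's: test $\nu_{k,N}^*-\mu_\infty$ against $U_j(x/2)$ with $n=p^j$, identify the moment with $\Delta_{k,N}^*(1,p^j)$, place $k$ at the Bessel transition point $4\pi p^{j/2}$ so the leading term is $\gg k^{-1/3}$, and divide by the size of the test polynomial to lose $(\log k)^2$. The paper derives this as an immediate consequence of Theorem~\ref{maint}, and your proposal is essentially a re-derivation of that theorem in the special case $m=1$, $n=p^j$.

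The genuine gap is your error analysis for $N>1$. You assert that the tail $\mathcal{E}_{k,j,N}$ consists of moduli $c\ge 2$ whose Bessel argument $4\pi p^{j/2}/c$ lies below the index, hence is $O(e^{-c'k})$. That is correct only for $N=1$. The M\"obius-inverted newform formula \eqref{newpeter} carries an additional sum over $l\mid L^{\infty}$, and the resulting Kloosterman terms have Bessel argument $4\pi l p^{j/2}/c$: for $c<l$ this argument \emph{exceeds} $k-1$ (oscillatory range, where each term is only $O(k^{-1/2})$ by \eqref{uj}), and for $l<c<2l$ it sits at or near the transition. These contributions are polynomially, not exponentially, small; controlling them is exactly the content of the proof of Theorem~\ref{maint} (the decomposition into $S_{2,\delta}$, $S_1(\delta)$, $S_2(\delta)$, $S_3(\delta)$, Weil's bound, and the uniform bounds \eqref{uj0}--\eqref{uj}), which gives $O_N(k^{-1/2})$ — still dominated by the $k^{-1/3}$ main term, so the theorem survives, but your stated bound does not. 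Two smaller points: the $i^{-k}$ sign-matching you flag as the "main obstacle" is a non-issue, since a discrepancy lower bound only requires the \emph{absolute value} of the moment to be large ($|\int U_j\,d(\nu-\mu)|\le \mathrm{Var}(U_j)\,D(\nu,\mu)$ after integration by parts); and it is this integration by parts with $\|U_j'\|_\infty\ll j^2$ — not Erd\H{o}s--Tur\'an, which bounds discrepancy from above — that correctly accounts for the $(\log k)^2$.
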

\begin{remark} The above exceptional sequence of weights  is  very explicit  and is given by
$k_n=\lfloor4\pi p^n \rfloor$.
Based on heuristics stemming from arithmetic quantum chaos, numerical evidence \cite[Figure 5 and Figure 6]{Gamburd}, and the random model described in the introduction  for the eigenvalues of the Hecke operator, it is expected that
\begin{equation}\label{upb}
D\left(\mu_{k,N}^*,\mu_p\right)=O_{\epsilon,N}\left(k^{-\frac{1}{2}+\epsilon}\right) \text{ and } D\left(\nu_{k,N}^*,\mu_{\infty}\right)=O_{\epsilon,N}\left(k^{-\frac{1}{2}+\epsilon}\right)
\end{equation}
for a density 1 set of $k$. In this context, the exponent $\frac{1}{3}$ in Theorem~\ref{weylpeter} (and Theorem~\ref{rmwe}) shows that one can not achieve \eqref{upb} for every even  weight $k$.
\end{remark}

Theorem \ref{weylpeter} is an immediate consequence of an explicit asymptotic formula for the Petersson trace formula. More precisely, let $B_{k,N}^*$ be the orthonormal basis of $S_{k}(N)^*$ consists of holomorphic Hecke cusp forms, and let
\[
 \Delta_{k,N}^*\left(m,n\right):= \frac{\Gamma(k-1)}{(4\pi)^{k-1}} \sum_{f\in B_{k,N}^*} \rho_f\left(m\right) \overline{\rho_f\left(n\right)}.
\]

\begin{theorem}\label{maint} Let $N\geq 1$ be a fixed square-free integer.
Assume that $|4\pi \sqrt{mn}-k|<2k^{\frac{1}{3}}$ and $\gcd\left(mn,N\right)=1$.  Then
$$
\Delta_{k,N}^*\left(m,n\right)=\frac{\varphi\left(N\right)}{N}\delta\left(m,n\right)+2\pi i^{-k} \frac{\mu\left(N\right)}{N} \prod_{p|N} \left(1-\frac{1}{p^2}\right)J_{k-1}\left(4\pi  \sqrt{mn}\right)+O_N\left(k^{-\frac{1}{2}}\right),
$$
where $\delta\left(m,n\right)=1$ if $m=n$ and $\delta\left(m,n\right)=0$ otherwise.
\end{theorem}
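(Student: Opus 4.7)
The strategy is to express $\Delta^{*}_{k,N}(m,n)$ via a Möbius-style newform sieve as a linear combination of the classical Petersson sums $\Delta_{k,M}(m,n)$ for $M\mid N$, apply the Petersson trace formula to each, and then extract the diagonal and $c=1$ contributions as main terms, estimating the remainder by Bessel function decay.

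For squarefree $N$ and $(mn,N)=1$, Atkin--Lehner theory produces an identity of the shape
\[
\Delta^{*}_{k,N}(m,n)=\sum_{LM=N}w(L)\,\Delta_{k,M}(m,n),
\]
with multiplicative weights $w(L)$ obtained by inverting the Gram matrix of the oldform lifts $f(dz)$, $d\mid L$. The two algebraic identities I will need are
\[
\sum_{LM=N}w(L)=\frac{\varphi(N)}{N},\qquad w(N)=\frac{\mu(N)}{N}\prod_{p\mid N}\!\left(1-\frac{1}{p^{2}}\right),
\]
which are multiplicative in $N$ and thus reduce to a prime-by-prime inner-product computation for $f(z),f(pz)$ in $S_{k}(p)$. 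Substituting the classical Petersson formula
\[
\Delta_{k,M}(m,n)=\delta(m,n)+2\pi i^{-k}\sum_{c>0,\ M\mid c}\frac{S(m,n;c)}{c}J_{k-1}\!\left(\frac{4\pi\sqrt{mn}}{c}\right)
\]
into the decomposition immediately yields the claimed diagonal piece (by the first identity) and the claimed $c=1$ off-diagonal piece, which can only arise from $M=1$, $L=N$ (by the second identity).

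It remains to bound the tail, i.e.\ the sum over $c\ge 2$. In the range $|4\pi\sqrt{mn}-k|<2k^{1/3}$, every such $c$ forces the Bessel argument to satisfy $4\pi\sqrt{mn}/c\le k/2+O(k^{1/3})$, which sits well inside the exponentially-decaying regime of $J_{k-1}$. Debye's uniform asymptotic expansion gives $J_{k-1}(x)\ll e^{-c_{0}k}$ for $x\le (1-\eta)(k-1)$ with some $\eta>0$, and together with Weil's bound $|S(m,n;c)|\ll c^{1/2}\sigma_{0}(c)(m,n,c)^{1/2}$ and the crude small-argument estimate $J_{\nu}(x)\ll (ex/(2\nu))^{\nu}$ (used to handle very large $c$), the entire tail is $O_{N}(e^{-c_{0}k})\ll_{N}k^{-1/2}$.

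The principal obstacle is the algebraic bookkeeping in the newform sieve: verifying that the coefficient $w(N)$ in front of $J_{k-1}(4\pi\sqrt{mn})$ is exactly $\frac{\mu(N)}{N}\prod_{p\mid N}(1-p^{-2})$ rather than a nearby but different multiplicative quantity. The analytic part (Bessel decay plus Weil) is comparatively routine once this identity is pinned down, precisely because the coprimality assumption $(mn,N)=1$ and the transition-range hypothesis conspire to leave $c=1$ as the only Bessel factor not forced into the exponentially-small regime.
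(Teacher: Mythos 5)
There is a genuine gap, and it sits exactly where you flagged the ``principal obstacle.'' The clean scalar-weighted sieve $\Delta^{*}_{k,N}(m,n)=\sum_{LM=N}w(L)\,\Delta_{k,M}(m,n)$ does not exist for $N>1$. Expanding both sides spectrally, the weight with which an oldform class of a level-$M'$ newform $g$ enters $\Delta_{k,M}(m,n)$ depends on the Hecke eigenvalues $\lambda_g(p)$ for $p\mid M/M'$ (this comes from the non-diagonal Gram matrix of the lifts $g(dz)$), so no choice of scalars $w(L)$ can kill all old classes simultaneously. The correct identity, which the paper takes from Luo (Proposition 2.9 there, equation \eqref{newpeter}), is
\[
\Delta_{k,N}^{*}\left(m,n\right)=\sum_{LM=N}\frac{\mu\left(L\right)}{L}\sum_{l\mid L^{\infty}}\frac{1}{l}\,\Delta_{k,M}\left(ml^{2},n\right),
\]
with an additional sum over $l$ and, crucially, the index shift $m\mapsto ml^{2}$. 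The diagonal still gives $\frac{\varphi(N)}{N}\delta(m,n)$ (only $l=1$ survives since $\gcd(mn,N)=1$), but the Bessel main term now comes from the terms $c=l$ for \emph{all} $l\mid N^{\infty}$, not just $c=1$: one uses $S(ml^{2},n;l)=S(0,n;l)=\mu(l)$ and $\sum_{l\mid N^{\infty}}\mu(l)/l^{2}=\prod_{p\mid N}(1-p^{-2})$ to produce the stated coefficient. Your value of $w(N)$ is reverse-engineered from the answer rather than derived.

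This algebraic error propagates into the analytic part and invalidates it. With the correct identity the Bessel argument is $4\pi l\sqrt{mn}/c$, and since $l$ ranges over arbitrarily large divisors of $N^{\infty}$, the terms with $c\asymp l$ (in particular $l<c<2l$ and $c<l$) put the Bessel function in its transition and oscillatory ranges, where it is only $O(k^{-1/2})$ or $O(k^{-1/3})$ --- not exponentially small. These terms are precisely the source of the $O_{N}(k^{-1/2})$ error in the statement (note that if your exponential-decay argument were valid, the theorem would have an exponentially small error term). The paper handles them by truncating $l>k^{\delta}$ via Luo's Corollary 2.2, treating $c>2l$ by exponential Bessel decay as you do, and bounding the remaining ranges with Weil's bound together with the uniform estimates $|J_{\alpha}(\alpha x)|\ll (|1-x^{2}|)^{-1/4}\alpha^{-1/2}$; this is the main analytic content of the proof and is entirely absent from your sketch. (For $N=1$ your argument is essentially correct, since then there is no $l$-sum and every $c\ge 2$ term is exponentially small.)
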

\begin{remark}
Since $|4\pi \sqrt{mn}-k|<2k^{\frac{1}{3}}$, by the asymptotic behavior of the $J$-Bessel function in the transition range \eqref{trans}, we have $|J_{k-1}\left(4\pi  \sqrt{mn}\right)| \gg \frac{1}{k^{\frac{1}{3}}}$.  It follows that
\[
2\pi i^{-k} \frac{\mu\left(N\right)}{N} \prod_{p|N} \left(1-\frac{1}{p^2}\right)J_{k-1}\left(4\pi  \sqrt{mn}\right)
\]
is the main term, and
$$
|\Delta_{k,N}^*\left(m,n\right)-\delta\left(m,n\right)|\gg \frac{1}{k^{\frac{1}{3}}}.
$$
The above lower bound violates the naive expected square root cancelation in the sum of the normalized Fourier coefficients of the newforms in this range. More generally, one can generalize Theorem~\ref{maint}  if $\left|\frac{4\pi \sqrt{mn}}{q}-k\right|<2k^{\frac{1}{3}}$ for any fixed integer $q>0$.  In the appendix by Simon Marshall, the existence of this asymptotic trace formula is explained via the geometric side of the Petersson trace formula.
\end{remark}
We prove Theorem~\ref{maint} in Section~\ref{peters} by  applying the Petersson trace formula and  partitioning the geometric side of this formula into three parts according to the various behavior of the $J$-Bessel function in different ranges. This partition is explained in the appendix according to the incidence of the associated pairs of horocycles.
% The main term comes from the $J$-Bessel function in the transition range where the associated  horocycles are tangent to each other, and the error term stays  in the ranges where  the $J$-Bessel function  decays rapidly.

Theorem~\ref{noweight} follows from Theorem~\ref{maint} upon averaging over the parameters $m$ and $k$. In fact  we expect that a stronger version of Theorem~\ref{noweight} to be true, namely
 $$
  \mathrm{Tr}~\mathcal{T}_n\left(k,N\right)^*=  (-1)^{k/2} \frac{\mu\left(N\right)k}{\pi}\frac{\sigma\left(n\right)}{n} J_k\left(4\pi\sqrt{n}\right)  \left(1+o\left(1\right)\right),
 $$
where $k=4\pi\sqrt{n}+o\left(n^{\frac{1}{6}}\right)$. However, removing the harmonic weights in Kuznetsov's formula  by only averaging over $m$ in our context is equivalent to a very strong unproven bound for the $L$-functions, namely:
\begin{hyp}\label{twistedsym}
Let $n=O\left(k^2\right)$ and $N$ be a fixed square free integer.  Then
\begin{equation}
   \frac{\Gamma(k-1)}{(4\pi)^{k-1}}\sum_{f\in B_{k,N}^*} \left|\rho_f(1)\right|^2 \lambda_f\left(n\right)L\left(\frac{1}{2}+it,\mathrm{sym}^2f\right)  =O_{\delta}\left(k^{-\frac{1}{6}-\delta}\right),
\end{equation}
where $t=O\left(\log k^A\right)$ for some $A>0$ and $\delta>0$.
\end{hyp}
We overcome  this problem by averaging  over $k$ in a very short interval.

%\section{Preliminaries}

%\subsection{Notations}

%We write   $\nu\left(N\right)=[SL_2\left(Z\right):\Gamma_0\left(N\right)]$, and  we have $\nu\left(N\right)=N\prod_{p|N}\left(1+\frac{1}{p}\right)$.

\section{Large discrepancy}\label{peters}
In this section, we deal with the lower bounds for the discrepancies
\[
D\left(\mu_{k_,N}^*, \mu_p\right),
\]
and
\[
D\left(\nu_{k_,N}^*, \mu_\infty\right).
\]
The main technical input for the lower bounds that we prove is an explicit asymptotic formula for the Petersson trace formula, Theorem \ref{maint}. Before we go into the details, we review some preliminary facts that are going to be used in the subsequent sections.
\subsection{Preliminary}
\subsubsection{$J$-Bessel function}\label{Bessel}
We first collect here estimates for the $J$-Bessel function from \cite{NIST:DLMF}.

When $\alpha \geq 0$ and $0<x \leq 1$, we have \cite[10.14.7]{NIST:DLMF}
\begin{equation}\label{large}
1\leq \frac{J_{\alpha}\left(\alpha x\right)}{x^{\alpha}J_{\alpha}\left(\alpha\right)}  \leq e^{\alpha\left(1-x\right)}.
\end{equation}
Note that $xe^{1-x}<1$ for $0<x<1,$ and  $0<J_{\alpha}\left(\alpha\right) \ll \frac{1}{\alpha^{\frac{1}{3}}}$ as $\alpha \to +\infty$; see~\eqref{trans}. Hence, \eqref{large} implies that $J_\alpha(\alpha x)$ is positive and exponentially small in $\alpha$ for any fixed $0<x <1$ as $\alpha \to +\infty$.

The transition range of the $J$-Bessel function $J_\alpha(y)$ is the range where $y$ is close to $\alpha$, i.e.,
\[
\alpha - c \alpha^{\frac{1}{3}} < y < \alpha + c \alpha^{\frac{1}{3}}
\]
is satisfied for some fixed constant $c>0$. In this range, we write $y=\alpha+a\alpha^{\frac{1}{3}}$, and the $J$-Bessel function has an asymptotic  in terms of the Airy function $Ai$~\cite[Theorem 1]{krasikov}
\begin{equation}\label{tranairy}
J_{\alpha}\left(\alpha+a\alpha^{\frac{1}{3}}\right)=  \frac{2^{\frac{1}{3}}}{\alpha^{\frac{1}{3}}}Ai\left(-2^{\frac{1}{3}}a\right) \left(1+O\left(\frac{1}{\alpha^{\frac{2}{3}}}\right)\right),
\end{equation}
where $a=O(1)$ and $\alpha\to \infty$. (See \cite[10.19.8]{NIST:DLMF} for the full asymptotic expansion of $J_\alpha(x)$ in this range.) Note that all zeros of Airy function $Ai(x)$ are negative, and the first zero is approximately $-2.33811\ldots $ \cite{MR0167642}. This implies that for $|a| <1$, we have
\begin{equation}\label{trans}
\frac{1}{\alpha^{\frac{1}{3}}} \ll J_{\alpha}\left(\alpha+a\alpha^{\frac{1}{3}}\right) \ll \frac{1}{\alpha^{\frac{1}{3}}}.
\end{equation}
We also have the following uniform upper bound for $ \frac{1}{2} \leq x < 1$,
\begin{equation}\label{uj0}
|J_\alpha \left(\alpha x\right)| \ll \frac{1}{\left(1-x^2\right)^{\frac{1}{4}}\alpha^{\frac{1}{2}}},
\end{equation}
and for $x\geq 1$,
\begin{equation}\label{uj}
|J_\alpha \left(\alpha x\right)| \ll \frac{1}{\left(x^2-1\right)^{\frac{1}{4}}\alpha^{\frac{1}{2}}}.
\end{equation}
If we combine \eqref{large}, \eqref{trans}, \eqref{uj0}, and \eqref{uj}, we have
\begin{equation}\label{wellknown}
|J_\alpha(y)| \ll \frac{1}{\alpha^\frac{1}{3}}.
\end{equation}

\subsubsection{Kloosterman sum}
For integers $m$, $n$, and $c\geq 1$, the Kloosterman sum $S(m,n;c)$ %\cite[Proposition 2.1]{Luo}
is defined by
\[
S(m,n;c) = \sum_{\substack{x \pmod{c}\\ \gcd(x,c)=1}} e\left(\frac{mx+nx^*}{c}\right),
\]
where $e(x) = \exp(2\pi i x)$, and $x^*$ is the multiplicative inverse of $x$ modulo $c$. We frequently use the following bound for the Kloosterman sum
\begin{equation}\label{weil}
|S\left(m,n;c\right)| \leq \sigma\left(c\right) \sqrt{\gcd\left(m,n,c\right)} \sqrt{c},
\end{equation}
which is often referred as Weil's bound.

\subsubsection{Petersson trace formula}
The Petersson trace formula \cite{Hans} is given by
\begin{multline}\label{peter}
\Delta_{k,N}\left(m,n\right):=\frac{\Gamma\left(k-1\right)}{\left(4\pi\right)^{k-1}}\sum_{f\in B_{k,N}}\rho_{f}\left(m\right) \overline{\rho_{f}\left(n\right)}\\
=\delta\left(m,n\right)+2\pi i^{-k}\sum_{c\equiv 0 \tpmod{N} }
\frac{S\left(m,n;c\right)}{c}J_{k-1}\left(\frac{4\pi \sqrt{mn}}{c}\right).
\end{multline}
For $M|N$, each newform $f$ of level $M$ gives rise to $\sigma\left(\frac{N}{M}\right)$ old forms in $S_k\left(N\right)$ \cite{Atkin}. By choosing a special orthonormal basis of $S_k\left(N\right)$, one may deduce the Petersson trace formula only for the newforms of squarefree level $N$ \cite[Proposition 2.9]{Luo}
\begin{multline}\label{newpeter}
\Delta_{k,N}^*\left(m,n\right):= \frac{\Gamma\left(k-1\right)}{\left(4\pi\right)^{k-1}}\sum_{f\in B_{k,N}^*}\rho_{f}\left(m\right) \overline{\rho_{f}\left(n\right)}\\
= \sum_{LM=N} \frac{\mu\left(L\right)}{L}\sum_{l|L^{\infty}} \frac{1}{l}\Delta_{k,M}\left(ml^2,n\right),
\end{multline}
where $\gcd(mn,N)=1.$
Henceforth, we assume that  $\gcd\left(mn,N\right)=1$  and
\begin{equation}\label{kn}
|4\pi\sqrt{mn}-k|<2k^{\frac{1}{3}}.
\end{equation}

\subsection{ Proof of Theorem~\ref{maint}.}
\begin{proof}
We apply the identity \eqref{newpeter} and obtain
\[
\Delta_{k,N}^*(m,n)= \sum_{LM=N} \frac{\mu(L)}{L}\sum_{l|L^{\infty}} \frac{1}{l}\Delta_{k,M}(ml^2,n).
\]
First, we analyze the contribution from $\delta\left(ml^2,n\right)$ which occurs when we apply the Petersson trace formula~\eqref{peter} to \eqref{newpeter}. Since $l|N^{\infty}$ and $\gcd\left(N,mn\right)=1$, the condition $ml^2=n$ can only be met if $l=1$ and $m=n$. By summing over $l$, we obtain
\[
 \sum_{LM=N} \frac{\mu\left(L\right)}{L}\sum_{l|L^{\infty}} \frac{1}{l}\delta\left(ml^2,n\right)= \sum_{LM=N} \frac{\mu\left(L\right)}{L}\delta\left(m,n\right)= \frac{\varphi\left(N\right)}{N}\delta\left(m,n\right).
\]
Therefore
\begin{equation*}
\Delta_{k,N}^*\left(m,n\right)=\frac{\varphi\left(N\right)}{N}\delta\left(m,n\right) + S_1+S_2,
\end{equation*}
where
\begin{equation}\label{S1}
S_1:=2\pi i^{-k} \frac{\mu\left(N\right)}{N} \prod_{p|N} \left(1-\frac{1}{p^2}\right) J_{k-1}\left(4\pi  \sqrt{mn}\right),
\end{equation}
\begin{equation*}
 S_2:=2\pi i^{-k} \sum_{LM=N} \frac{\mu\left(L\right)}{L}\sum_{l|L^{\infty}} \frac{1}{l}\sum_{\substack{ c\equiv 0 \pmod{M}\\ c\neq l}} \frac{S\left(ml^2,n;c\right)}{c}J_{k-1}\left(\frac{4\pi l \sqrt{mn}}{c}\right).
\end{equation*}
We have broken up the sum over $c\equiv 0 \pmod{N}$ into the terms $S_1$, for
which $c=l$, and $S_2$, for which $c\neq l$. The term $S_1$ has been simplified using the fact that the condition $c=l$ restricts the summation over $LM=N$ to $L=N$ and $M = 1,$ since $M|c, l|L,$ and $\gcd\left(L,M\right) = 1$, together with the fact that $S\left(ml^2; n; l\right) = S\left(0; n; l\right)=\mu\left(l\right)$.

By the estimate \eqref{trans} and the assumption   \eqref{kn}, we have $|S_1| \gg_N \frac{1}{k^{\frac{1}{3}}}$. Next, we give an upper bound for $S_2$. For $\delta>0$ to be chosen, let
$$
S_{2,\delta}:= 2\pi i^{-k} \sum_{LM=N} \frac{\mu\left(L\right)}{L}
\sum_{\substack{l>k^{\delta}\\l|L^{\infty}}}
 \frac{1}{l}\sum_{ \substack{ c\equiv 0 \pmod{M}\\ c\neq l}} \frac{S\left(ml^2,n;c\right)}{c}J_{k-1}\left(\frac{4\pi l \sqrt{mn}}{c}\right).
$$
For $k^{\delta}>N$, it follows from \eqref{newpeter} and \eqref{S1} that
$$
S_{2,\delta}=\sum_{LM=N} \frac{\mu\left(L\right)}{L}\sum_{\substack{l>k^{\delta}\\l|L^{\infty}}}\frac{1}{l} \left( \Delta_{k,M}\left(ml^2,n\right)-\delta\left(ml^2,n\right)\right).
$$
By \cite[Corollary 2.2]{Luo}, we have
$$
\Delta_{k,M}\left(ml^2,n\right)-\delta\left(ml^2,n\right)=O_{N,\epsilon}\left(\frac{\left(mn\right)^{\frac{1}{4}+\epsilon} l^{\frac{1}{2}+\epsilon}}{k^{\frac{5}{6}}} \right).
$$
Therefore
$$
|S_{2,\delta}| \ll_{N,\epsilon} \sum_{\substack{l>k^{\delta}\\l|N^{\infty}}} \frac{1}{l}\frac{\left(mn\right)^{\frac{1}{4}+\epsilon} l^{\frac{1}{2}+\epsilon}}{k^{\frac{5}{6}}}.
$$
By \eqref{kn}, we have
\begin{equation}\label{S2d}
|S_{2,\delta}| \ll_{N,\epsilon} {k^{-\frac{1}{3}+\epsilon}}  \sum_{\substack{l>k^{\delta}\\l|N^{\infty}}}  l^{-\frac{1}{2}+\epsilon} =O_{N,\epsilon}\left({k^{-\frac{1}{3}-\frac{\delta}{2}+2\epsilon}}\right).
\end{equation}
Finally, we give an upper bound for $S\left(\delta\right):=S_2-S_{2,\delta}$. We split $S\left(\delta\right)$ into three parts, each of which has a restriction on the sum over $c\equiv 0 \pmod{M}$.  We write $S_i\left(\delta\right)$ for the sum $S\left(\delta\right)$ subjected to the $i$th condition listed below.
\begin{enumerate}
\item $2l<c$
\item $l<c<2l$
\item $c<l$
\end{enumerate}
By  \eqref{large}, \eqref{trans} and \eqref{weil}, we first have
 \begin{align*}
 |S_1\left(\delta\right)| &\ll \left| \sum_{LM=N} \frac{\mu\left(L\right)}{L}\sum_{\substack{l<k^{\delta}\\l|L^{\infty}}} \frac{1}{l}\sum_{\substack{c \equiv 0 \pmod{M}\\c>2l}} \frac{S\left(ml^2,n;c\right)}{c}J_{k-1}\left(\frac{4\pi l \sqrt{mn}}{c}\right) \right|\\
 &\ll_N  \sum_{\substack{l|N^{\infty}\\ l<k^{\delta}}} \frac{1}{l}\sum_{c>2l } \left|\frac{S\left(ml^2,n;c\right)}{c} J_{k-1}\left(\frac{4\pi l \sqrt{mn}}{c}\right)\right|\\
  &\ll_N  \sum_{\substack{l|N^{\infty}\\ l<k^{\delta}}} \frac{1}{l}\sum_{c>2l }  \frac{e^{k\left(1-\frac{l}{c}+\log\left(\frac{l}{c}\right)\right)}}{k^{\frac{1}{3}}}\\
 & \ll_N \sum_{\substack{l|N^{\infty}\\ l<k^{\delta}}} \frac{ e^{k\left(1-\frac{1}{2}-\log\left(2\right)\right)} }{k^{\frac{1}{3}}}\ll_{N,\delta}  e^{-\left(0.19\right) k}. \numberthis \label{tail}
 \end{align*}
Next, we give an upper bound for $S_2\left(\delta\right)$ and  $S_3\left(\delta\right)$. Assume that $l<c<2l<2k^{\delta}$. By the  inequality \eqref{uj0}, \eqref{kn} and \eqref{weil}
\begin{align*}
|S_2\left(\delta\right)| &\ll \left| \sum_{LM=N} \frac{\mu\left(L\right)}{L}\sum_{\substack{l<k^{\delta}\\l|L^{\infty}}} \frac{1}{l}\sum_{\substack{ c\equiv 0 \pmod{M}\\ c<2l} } \frac{S\left(ml^2,n;c\right)}{c}J_{k-1}\left(\frac{4\pi l \sqrt{mn}}{c}\right) \right|
\\
& \ll_{N,\epsilon} \sum_{\substack{l|N^{\infty}\\ l<k^{\delta}}} \frac{1}{l}\sum_{l<c<2l } \sqrt{\gcd\left(m,n,c\right)} c^{-\frac{1}{2}+\epsilon} \left| J_{k-1}\left(\frac{4\pi l \sqrt{mn}}{c}\right) \right|
\\
& \ll_{N,\epsilon} \sum_{\substack{l|N^{\infty}\\ l<k^{\delta}}} \frac{1}{l}\sum_{l<c<2l } \sqrt{\gcd\left(m,n,c\right)} c^{-\frac{1}{2}+\epsilon} k^{-\frac{1}{2}}  \frac{1}{\left(1-\frac{l^2}{c^2}\right)^{\frac{1}{4}}}
\\
& \ll_{N,\epsilon}   k^{-\frac{1}{2}} \sum_{\substack{l|N^{\infty}\\ l<k^{\delta}}} l^{-\frac{5}{4}+\epsilon}\sum_{l<c<2l } \frac{\sqrt{\gcd\left(m,n,c\right)} }{\left(c-l\right)^{\frac{1}{4}}}
\\
& \ll_{N,\epsilon} k^{-\frac{1}{2}}  \sum_{\substack{l|N^{\infty}\\ l<k^{\delta}}} l^{-\frac{1}{2}+\epsilon} \ll_N k^{-\frac{1}{2}}, \numberthis \label{eq11}
\end{align*}
where we let $\epsilon=1/100$ in the last estimate. Finally,  assume that $c<l<k^{\delta}.$ Then by \eqref{uj} and \eqref{weil}
\begin{align*}
|S_3\left(\delta\right)| &\ll \left| \sum_{LM=N} \frac{\mu\left(L\right)}{L}\sum_{\substack{l<k^{\delta}\\l|L^{\infty}}} \frac{1}{l}\sum_{\substack{ c \equiv 0 \pmod{M}\\ c<l }} \frac{S\left(ml^2,n;c\right)}{c}J_{k-1}\left(\frac{4\pi l \sqrt{mn}}{c}\right) \right|
\\
& \ll_{N,\epsilon} \sum_{\substack{l|N^{\infty}\\ l<k^{\delta}}} \frac{1}{l}\sum_{c<l } \sqrt{\gcd\left(m,n,c\right)} c^{-\frac{1}{2}+\epsilon} \left| J_{k-1}\left(\frac{4\pi l \sqrt{mn}}{c}\right) \right|
\\
& \ll_{N,\epsilon} \sum_{\substack{l|N^{\infty}\\ l<k^{\delta}}} \frac{1}{l}\sum_{c<l } \sqrt{\gcd\left(m,n,c\right)} c^{-\frac{1}{2}+\epsilon} k^{-\frac{1}{2}}  \frac{1}{\left(\frac{l^2}{c^2}-1\right)^{\frac{1}{4}}}
\\
& \ll_{N,\epsilon}   k^{-\frac{1}{2}} \sum_{\substack{l|N^{\infty}\\ l<k^{\delta}}} l^{-\frac{5}{4}}\sum_{c<l } \sqrt{\gcd\left(m,n,c\right)} c^{-\frac{1}{2}+\epsilon} \frac{c^{\frac{1}{2}}}{\left(l-c\right)^{\frac{1}{4}}}
\\
& \ll_{N,\epsilon} k^{-\frac{1}{2}}  \sum_{\substack{l|N^{\infty}\\ l<k^{\delta}}} l^{-\frac{1}{2}+\epsilon} \ll_N k^{-\frac{1}{2}}, \numberthis \label{eq22}
\end{align*}
again, where we let  $\epsilon=1/100$ in the last estimate.

Now let $\delta=1$ and combine \eqref{S2d}, \eqref{tail},  \eqref{eq11} and \eqref{eq22} to obtain
\begin{multline*}
\Delta_{k,N}^*\left(m,n\right)=
\frac{\varphi\left(N\right)}{N}\delta\left(m,n\right)+2\pi i^{-k} \frac{\mu\left(N\right)}{N} \prod_{p|N} \left(1-\frac{1}{p^2}\right)J_{k-1}\left(4\pi  \sqrt{mn}\right)+O_N\left(k^{-\frac{1}{2}}\right).\qedhere
\end{multline*}
\end{proof}
\subsection{ Proof of Theorem~\ref{weylpeter}.}
\begin{proof}
Recall that
\[
 \nu_{k,N}^*:= \frac{\left(4\pi\right)^{k-1}}{\Gamma\left(k-1\right)}\sum_{f\in B_{k,N}^*} |\rho_f(1)|^2\delta_{\lambda_f\left(p\right)}.
\]
Since $|\lambda_f\left(p\right)| \leq 2$, we can write   $\lambda_f\left(p\right)=2\cos\left(\theta_f\left(p\right)\right)$ for a unique  $0 \leq \theta_f\left(p\right) \leq \pi$. Let $U_n\left(\cos \theta\right)=\frac{\sin\left(n+1\right)\theta}{\sin\theta}$ for $n\geq 0$ be the $n$th Chebyshev polynomial of the second
kind. Recall from \cite[Lemma 3]{Conrey} that $\lambda_{f}\left(p^n\right)=U_n\left(\frac{\lambda_f\left(p\right)}{2}\right)$.  In order to give a lower bound for the discrepancy between  $\nu_{k_n,N}^*$ and $\mu_{\infty}$  for $k_n:= \lfloor 4\pi \sqrt{p^n} \rfloor$, we compute the difference between  the expected value of $U_n\left(\frac{x}{2}\right)$ with respect to these measures. Note that $\{U_n(\frac{x}{2})\}_{n=0}^\infty$ is an orthogonal set of polynomials with respect to $\mu_{\infty}$ \cite[7.343.2]{GR}. Hence for $n \geq 1$,
\[
\int_{-2}^{2} U_n\left(\frac{x}{2}\right) d\mu_{\infty}\left(x\right)=0.
\]
On the other hand, by Theorem~\ref{maint}, since $|k_n-4\pi\sqrt{p^n}|<1$  we have
\begin{multline*}
 \int_{-2}^{2} U_n\left(\frac{x}{2}\right)  d\nu_{k,N}^*\left(x\right)=\Delta_{k_n,N}^*\left(1,p^n\right)\\
 =2\pi i^{-k}\frac{\mu\left(N\right)}{N} \prod_{p|N}\left(1-\frac{1}{p^2}\right)J_{k_n-1}\left(4\pi  \sqrt{p^n}\right) +O_N\left(k^{-\frac{1}{2}}\right).
\end{multline*}
As pointed out in Remark~\ref{tranrem}, since $|k_n-4\pi\sqrt{p^n}|<1$ then by \eqref{trans}, we have
$$
 \int_{-2}^{2} U_n\left(\frac{x}{2}\right)  d\nu_{k,N}^*\left(x\right)\gg_N k_n^{-\frac{1}{3}}.
$$
By integration by parts and the upper bound $|U_n^{\prime}\left(\frac{x}{2}\right)| \ll n^2$, it follows that
 \begin{equation}\label{Erdus}
D\left(\nu_{k_n,N}^*, \mu_{\infty}\right) \gg \frac{1}{n^2k_n^{\frac{1}{3}}} .
\end{equation}
%\begin{equation}\label{tranlo}
% {\sum}^h_{f\in B_{k_n,N}^*} \lambda\left(p^n\right)\gg_N \frac{1}{k_n^{\frac{1}{3}}}.
%\end{equation}
Since $k_n=\lfloor 4\pi\sqrt{p^n} \rfloor$, we conclude that
\[
D\left(\nu_{k_n,N}^*, \mu_{\infty}\right) \gg  \frac{1}{k_n^{\frac{1}{3}}(\log k_n)^2}. \qedhere
\]
\end{proof}

\section{Removing the weights}\label{rmweight}
In this section we give the proof of Theorem~\ref{noweight}, from which Theorem~\ref{rmwe} follows as a corollary. Note that the trace of the Hecke operator $\mathcal{T}_n\left(N,k\right)^*$ is obtained by  removing the arithmetic weights $\left|\rho_f(1)\right|^2$ from the Petersson trace formula~\eqref{newpeter} with $m=1$. The usual trick  for removing these weights is to average the Petersson trace formula~\eqref{newpeter} smoothly over $m^2$ where $\gcd\left(m,N\right)=1$. Unfortunately, once summed over $m^2$, it is difficult to prove a bound for the contribution coming from $S_2\left(\delta\right)$ and $S_3\left(\delta\right)$ that is smaller than the contribution from the main term. We therefore sum the trace formula as $k$ varies inside a short interval of size  $\sim k^{\delta}$ for some $\frac{1}{5}<\delta <\frac{1}{3}$ and exploit the cancellation coming from the summation of $J$-Bessel function over the order $k$ (Lemma~\ref{avk}). We note here that $\delta<\frac{1}{5}$ is not large enough to bound the error term and  $\delta>\frac{1}{3}$ makes the main term smaller than the error term! Theorem~\ref{noweight} then follows from  Weil's bound for the Kloosterman sum and Lemma~\ref{avk}.
\subsection{Averaging over the weight}\label{secwe}
Recall that $\psi$ is a positive smooth function supported in $\left\lbrack-1,1\right\rbrack$ and $\int_{-1}^{1}\psi\left(t\right)dt=1$.   Let $K>0$ be a positive real number.
 \begin{lemma}\label{avk}
Fix $0<\delta<\frac{1}{3}$ and $\eta >1-3\delta>0$. Let $x>0$. If $|x-K| > K^{\eta+\delta}$,  then
\begin{equation}\label{sta1}
\sum_{l\equiv 1 \mod 2}\psi\left(\frac{l-K}{K^{\delta}}\right) J_l\left(x\right) \ll_{A,\psi,\eta,\delta}  K^{-A}.
 \end{equation}
If $|x-K| < K^{\eta+\delta}$, we have
 \begin{equation}\label{sta2}
\frac{1}{K^{\delta}}\sum_{l\equiv 1 \mod 2}\psi\left(\frac{l-K}{K^{\delta}}\right) J_l\left(x\right) \ll   K^{-\frac{1}{3}}.
\end{equation}
Moreover, if $x=K+o\left(K^{\frac{1}{3}}\right)$ then
\begin{equation}\label{sta3}
\frac{1}{K^{\delta}} \sum_{l\equiv 1 \mod 2}\psi\left(\frac{l-K}{K^{\delta}}\right) J_l\left(x\right) = J_K\left(x\right)\left(\frac{1}{2}+o_\psi(1)\right) \gg_{\psi} K^{-\frac{1}{3}}.
\end{equation}
 \end{lemma}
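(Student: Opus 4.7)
The plan is to dispose of the three estimates in increasing order of difficulty, namely \eqref{sta2}, then \eqref{sta3}, and finally \eqref{sta1}, which is the heart of the lemma. The bound \eqref{sta2} is essentially trivial: the weight $\psi((l-K)/K^{\delta})$ has support of length $2K^{\delta}$ centered at $K$ and so contains $O(K^{\delta})$ odd integers, while for each such $l$ the universal estimate \eqref{wellknown} gives $|J_{l}(x)|\ll l^{-1/3}\ll K^{-1/3}$; the triangle inequality then finishes the job.

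For \eqref{sta3} I would apply the Airy asymptotic \eqref{tranairy} uniformly for $l$ in the support. Writing $l=K+sK^{\delta}$ with $|s|\leq 1$ and $x=K+\sigma K^{1/3}$ with $\sigma=o(1)$, the Airy parameter attached to $J_{l}(x)$ is $a_{l}=(x-l)/l^{1/3}=\sigma-sK^{\delta-1/3}+O(K^{\delta-1})=\sigma+o(1)$ uniformly in $s$ (using $\delta<1/3$). Combining this with $l^{1/3}=K^{1/3}(1+o(1))$ and the continuity of $Ai$ at the non-vanishing value $-2^{1/3}\sigma$, one obtains $J_{l}(x)=J_{K}(x)(1+o_{\psi}(1))$ uniformly, so $J_{K}(x)$ factors out of the sum. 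A Poisson summation (using $\mathbf{1}[l \text{ odd}]=(1-(-1)^{l})/2$, where the $(-1)^{l}$-part evaluates via Poisson to $O_{A}(K^{-A})$ by the Schwartz decay of $\hat\psi$ at arguments of size $\gg K^{\delta}$) counts $\sum_{l\text{ odd}}\psi((l-K)/K^{\delta})=\tfrac{1}{2}K^{\delta}+O_{A}(K^{-A})$, and the lower bound $|J_{K}(x)|\gg K^{-1/3}$ from \eqref{trans} closes the proof.

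For \eqref{sta1} I would use $\mathbf{1}[l \text{ odd}]=(1-(-1)^{l})/2$ together with $(-1)^{l}J_{l}(x)=J_{l}(-x)$ to reduce to bounding $S(y):=\sum_{l\in\mathbb{Z}}\psi((l-K)/K^{\delta})J_{l}(y)$ for $y=\pm x$ (the support lies in positive $l$ for large $K$). Inserting the integer integral representation $J_{l}(y)=(2\pi)^{-1}\int_{-\pi}^{\pi}e^{i(y\sin\theta-l\theta)}d\theta$ and applying Poisson summation in $l$ to the resulting exponential sum yields, up to an $O_{A}(K^{-A})$ error,
\[
S(y)=\frac{K^{\delta}}{2\pi}\int_{-\pi}^{\pi}e^{i\Phi_{y}(\theta)}\hat{\psi}\!\left(\frac{K^{\delta}\theta}{2\pi}\right)d\theta,\qquad \Phi_{y}(\theta)=y\sin\theta-K\theta.
\]
Schwartz decay of $\hat\psi$ confines the effective range of $\theta$ to $|\theta|\lesssim K^{-\delta}\log K$. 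On that range, $\Phi_{x}'(\theta)=(x-K)-x(1-\cos\theta)=(x-K)+O(K^{1-2\delta}(\log K)^{2})$, and the assumption $\eta>1-3\delta$ (equivalently $K^{\eta+\delta}\gg K^{1-2\delta}(\log K)^{2}$) combined with $|x-K|>K^{\eta+\delta}$ forces $|\Phi_{x}'(\theta)|\asymp|x-K|\geq K^{\eta+\delta}$. Repeated integration by parts in $\theta$, each step gaining a factor $K^{\delta}/|\Phi_{x}'|\leq K^{-\eta}$ (differentiation of $\hat\psi(K^{\delta}\theta/(2\pi))$ costs at most $K^{\delta}$, which is cancelled by $1/|\Phi_{x}'|$), yields $|S(x)|\ll_{A}K^{-A}$. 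The companion sum $S(-x)$ is even smaller, since $\Phi_{-x}'(\theta)=-x\cos\theta-K\asymp -K$ has no small zero.

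The main obstacle is the IBP bookkeeping in the proof of \eqref{sta1}: after $N$ iterations of the operator $L\colon h\mapsto \partial_{\theta}(h/(i\Phi_{x}'))$, the integrand becomes a sum of terms of the form $\hat\psi^{(j)}(K^{\delta}\theta/(2\pi))$ times a rational expression in the $\Phi_{x}^{(j)}$, and one must verify inductively that each such term is bounded by $K^{-N\eta}$ times a Schwartz-decaying factor uniformly on the effective range. The positive powers of $K^{\delta}$ produced by differentiating $\hat\psi(K^{\delta}\theta/(2\pi))$ must be absorbed by $1/|\Phi_{x}'|\leq K^{-\eta-\delta}$; the threshold $\eta+\delta>1-2\delta$ is sharp, matching precisely the scale at which the stationary point $\theta^{\ast}\asymp\sqrt{(x-K)/K}$ of $\Phi_{x}$ enters the effective range $|\theta|\lesssim K^{-\delta}$.
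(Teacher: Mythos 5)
Your proposal is correct and follows essentially the same route as the paper: the trivial bound \eqref{wellknown} for \eqref{sta2}, the Airy/transition-range asymptotic for \eqref{sta3}, and for \eqref{sta1} the integral representation of $J_l$ plus Poisson summation in $l$ followed by non-stationary phase, with the hypothesis $\eta>1-3\delta$ playing exactly the same role (the paper simply Poisson-sums directly over odd $l$, which packages your $\tfrac12(S(x)-S(-x))$ splitting into the difference of the two phases $f_\pm$). One cosmetic point: truncating the $\theta$-integral at $|\theta|\lesssim K^{-\delta}\log K$ only yields $(\log K)^{-A}$ tails from the Schwartz decay of $\hat\psi$, so to reach $O_A(K^{-A})$ you should truncate at $K^{-\delta+\kappa}$ for a small $\kappa>0$ (as the paper does, with $\kappa<\min\{\delta,\tfrac12(\eta-(1-3\delta))\}$), which your margin $\eta+\delta>1-2\delta$ still comfortably absorbs.
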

 \begin{proof}
As done in \cite[\S5.5]{Iwaniecb}, we use the integral representation of the $J$-Bessel function
$$
J_l\left(x\right)=\int_{-\frac{1}{2}}^{\frac{1}{2}} e^{-2\pi i lt}e^{-ix\sin2\pi t}dt,
$$
from \cite[8.411.1]{GR}. By the  Poisson summation formula, it follows that
\begin{equation*}
\begin{split}
\sum_{l\equiv 1 \mod 2}\psi\left(\frac{l-K}{K^{\delta}}\right)J_l\left(x\right)=\int_{-\infty}^{\infty}\hat{\psi}\left(u\right) e^{-2\pi iuK^{1-\delta}} \left( e^{-ix\sin\left(\frac{2\pi u}{K^{\delta}}\right)}-e^{ix\sin\left(\frac{2\pi u}{K^{\delta}}\right)}  \right) du.
\end{split}
\end{equation*}
Because $\hat{\psi}(u) \ll_A |u|^{-A}$, we may assume that the integral is taken over $|u|<K^\kappa$ with some $\kappa$ that satisfies
\[
0<\kappa<\min\left\{\delta,~ \frac{1}{2}(\eta-(1-3\delta)) \right\}.
\]
Since the remaining portion of the integral contributes a negligible
amount.
Let
\[
f_\pm (u) = -uK^{1-\delta}\pm \frac{x}{2\pi} \sin\left(\frac{2\pi u}{K^{\delta}}\right),
\]
and then we have
\[
f_\pm(u)' = -K^{1-\delta}\pm \frac{x}{K^{\delta}} \cos\left(\frac{2\pi u}{K^{\delta}}\right) =   \frac{-K\pm x}{K^{\delta}} + O \left(\frac{xu^2}{K^{3\delta}}\right).
\]
Now assume that $x>0$ and that $|K-x|> K^{\delta+\eta}$. If $x>2K$, then
\[
\left|f_\pm(u)'\right| \gg xK^{-\delta} + O(x K^{-\delta-2(\delta-\eta)}) \gg K^{1-\delta}.
\]
If $0<x<2K$, then
\[
\left|f_\pm(u)'\right| \gg K^{\eta} + O( K^{1+2\kappa - 3\delta}),
\]
and because $1+2\kappa-3\delta < 1-3\delta +(\eta-(1-3\delta)) =\eta$, we have
\[
\left|f_\pm(u)'\right| \gg K^{\eta}.
\]

Therefore, by repeated integration by parts, we have
$$
\int_{-K^{\kappa}}^{K^{\kappa}} \hat{\psi}\left(u\right) e^{-2\pi iuK^{1-\delta}} \left( e^{-ix\sin\left(\frac{2\pi u}{K^{\delta}}\right)}-e^{ix\sin\left(\frac{2\pi u}{K^{\delta}}\right)}  \right) du \ll_{A,\psi,\eta,\delta} K^{-A},
$$
for any $A>0$. This completes the proof of \eqref{sta1}.

The inequality~\eqref{sta2} follows from the upper bound \eqref{wellknown}, and the fact that $\psi$ is supported in $[-1,1]$. Finally, \eqref{sta3} follows from  the asymptotic of the $J$-Bessel function in the transition range. More precisely, recall \eqref{tranairy} and ~\eqref{trans}
\[
J_{\alpha}\left(\alpha+a\alpha^{\frac{1}{3}}\right)=  \frac{2^{\frac{1}{3}}}{\alpha^{\frac{1}{3}}}Ai\left(-2^{\frac{1}{3}}a\right) \left(1+O_{\delta}\left(\frac{1}{\alpha^{\frac{2}{3}}}\right)\right)\gg  \frac{1}{\alpha^{\frac{1}{3}}},\]
where $|a|<2.$ Hence, for $x=K+o\left(K^{\frac{1}{3}}\right)$ and $0<\delta<\frac{1}{3}$
\[
\frac{1}{K^{\delta}} \sum_{l\equiv 1 \mod 2}\psi\left(\frac{l-K}{K^{\delta}}\right) J_l\left(x\right) = J_K\left(x\right)\left(\frac{1}{2}+o_\psi(1)\right) \gg_{\psi} K^{-\frac{1}{3}}. \qedhere
\]
\end{proof}
First, we cite some identities from \cite{Luo} that we use in the proof. Let $f$ be a newform of $S_k\left(N\right)$ of level $M$. Then by  \cite[Lemma 2.5]{Luo}, we have
\begin{equation}\label{harm}
\rho_f(m)\overline{\rho_f(n)} = \frac{(4\pi)^{k-1}}{\Gamma(k)} \frac{12M \lambda_f(m)\lambda_f(n)}{\nu(N)\varphi(M) Z(1,f)},
\end{equation}
where $Z\left(s,f\right):=\sum_{n=1}^\infty \lambda_f\left(n^2\right)n^{-s}$. Note that $Z\left(s,f\right)$ is related to $L\left(s,\mathrm{sym}^2 f\right)$ by
$$
L\left(s,\mathrm{sym}^2 f\right)=\frac{\zeta\left(2s\right)}{\zeta_N\left(2s\right)}Z\left(s,f\right),
$$
where $\zeta_N\left(2s\right)=\prod_{p|N}\left(1-p^{-2s}\right)^{-1}$ \cite[(3.14)]{Luo}. Let
\[
Z^N\left(s,f\right):=\sum_{\substack{m=1\\ \gcd\left(m,N\right)=1}}^\infty \frac{\lambda_f\left(m^2\right)}{m^s}.
\]
Then by  \cite[(3.16)]{Luo},
\begin{equation}\label{Z^N} Z^N\left(s,f\right) = L\left(s,\mathrm{sym}^2 f\right)   \frac{\zeta_N\left(2s\right)}{\zeta\left(2s\right)\zeta_N\left(s+1\right)}.
\end{equation} By the celebrated result of Shimura~\cite{Shimura},  $L\left(s,\mathrm{sym}^2 f\right) $ is an entire function. Hence  $Z^N\left(s,f\right)$ is holomorphic  for $\Re\left(s\right)>\frac{1}{2}$ and has a meromorphic continuation to the complex plane.   Let $w\left(x\right)=\exp\left(-x\right)$. Note that the Mellin transform of $w$ is the Gamma function
\[
\hat{w}\left(s\right):=\int_{0}^{\infty} x^{s-1}w\left(x\right)dx=\Gamma\left(s\right).
\]

\subsection{Proof of Theorem~\ref{noweight}}
\begin{proof}
 Assume that $k\in [K-K^{\delta},K+K^{\delta}]$ where $\delta<\frac{1}{3}$.  By the Petersson formula \eqref{newpeter},
 \begin{equation}\label{rw}
\frac{\Gamma(k-1)}{(4\pi)^{k-1}}\sum_{f\in B_{k,N}^*}\rho_f\left(m^2\right) \overline{\rho_{f}\left(n\right)}= \sum_{LM=N} \frac{\mu\left(L\right)}{L}\sum_{l|L^{\infty}} \frac{1}{l}\Delta_{k,M}\left(m^2l^2,n\right).
\end{equation}
Let $T=K^{\alpha}$ for some fixed $0<\alpha<1$ that we choose at the end of the proof.  We average the left-hand side of the above by the smooth function $\frac{1}{x}w\left(\frac{x}{T}\right)$ and use \eqref{harm} to obtain
\begin{multline}\label{rhs}
\frac{\Gamma(k-1)}{(4\pi)^{k-1}}\sum_{\substack{m\geq 1\\ \gcd\left(m,N\right)=1}}\frac{1}{m}w\left(\frac{m}{T}\right)\sum_{f\in B_{k,N}^* }\rho_f\left(m^2\right) \overline{\rho_{f}\left(n\right)}\\
=\sum_{f\in B_{k,N}^* }\sum_{\substack{m\geq 1\\ \gcd\left(m,N\right)=1}}w\left(\frac{m}{T}\right) \frac{12 \lambda_f\left(n\right)\lambda_{f}\left(m^2\right)\zeta_N\left(2\right)}{m\left(k-1\right)NZ\left(1,f\right)} \\
= \frac{12 }{\left(k-1\right)N} \sum_{f\in B_{k,N}^* } \lambda_f\left(n\right)   \frac{\zeta_N\left(2\right)}{Z\left(1,f\right)}\sum_{\substack{m\geq 1\\ \gcd\left(m,N\right)=1}} w\left(\frac{m}{T}\right) \frac{\lambda_{f}\left(m^2\right)}{m }.
\end{multline}
By the  Mellin inversion formula, we have $w\left(\frac{x}{T}\right)=\frac{1}{2\pi i} \int_{2-i\infty}^{2+i\infty}\Gamma\left(s\right)T^s x^{-s}ds$ and this implies
\begin{equation*}
\sum_{\substack{m\geq 1\\ \gcd\left(m,N\right)=1}} w\left(\frac{m}{T}\right) \frac{\lambda_{f}\left(m^2\right)}{m}=\frac{1}{2\pi i}\int_{2-i\infty}^{2+i\infty} Z^N\left(s+1,f\right)T^s \Gamma\left(s\right)ds.
\end{equation*}
We shift the contour to the line $\Re\left(s\right)=-\frac{1}{2}$  and pick up the pole of $\Gamma\left(s\right)$ at $s=0$ with residue $Z^N\left(1,f\right)= \frac{Z\left(1,f\right)}{\zeta_N\left(2\right)}$, and hence
\begin{equation}\label{rezvan}
\sum_{\substack{m\geq 1\\ \gcd\left(m,N\right)=1}}w\left(\frac{m}{T}\right) \frac{\lambda_{f}\left(m^2\right)}{m}=\frac{Z\left(1,f\right)}{\zeta_N\left(2\right)}+\frac{1}{2\pi i}\int_{-\frac{1}{2}-i\infty}^{-\frac{1}{2}+i\infty}Z^N\left(s+1,f\right)T^s \Gamma\left(s\right)ds.
\end{equation}
By \eqref{Z^N},
\begin{multline*}
\frac{1}{2\pi i}\int_{-\frac{1}{2}-i\infty}^{-\frac{1}{2}+i\infty}Z^N\left(s+1,f\right)T^s \Gamma\left(s\right)ds\\
=  \frac{1}{2\pi i}\int_{-\infty}^{\infty} L\left(\frac{1}{2}+it,\mathrm{sym}^2 f\right)   \frac{\zeta_N\left(1+2it\right)}{ \zeta\left(1+2it\right) \zeta_N\left(\frac{3}{2}+it\right)}T^{-\frac{1}{2}+it} \Gamma\left(-\frac{1}{2}+it\right)dt.
 \end{multline*}
 First, we bound the portion of the integral for which $|t|>\left(\log k\right)^2$. By Stirling's formula \cite[5.11.9]{NIST:DLMF},
$$
\Gamma\left(-\frac{1}{2}+it\right)=O\left(\left(1+|t|\right)^{-1}e^{-\frac{\pi |t|}{2}}\right).
$$
By using the above bound,  the convexity bound \cite[(34)]{MR1826269}
\[
L\left(\frac{1}{2}+it,\mathrm{sym}^2 f\right) \ll_{\epsilon,N} k^{\frac{1}{2}+\epsilon}(|t|+1)^{\frac{3}{4}+\epsilon},
\]
the well-known bound $ \zeta\left(1+2it\right)^{-1}=O\left(\log (|t|+1)\right) $, the fact that $\zeta_N\left(2s\right) \zeta_N\left(s+1\right)^{-1}$ is bounded on $\Re\left(s\right)=\frac{1}{2}$ and  $|T^{-\frac{1}{2}+it}|\leq T^{-\frac{1}{2}}\leq k^{-\frac{\alpha}{2}},$ it follows that
\begin{multline*}
\left(\int_{-\infty}^{-(\log k)^2}+\int_{(\log k)^2}^{\infty}\right)     \frac{L\left(\frac{1}{2}+it,\mathrm{sym}^2 f\right)\zeta_N\left(1+2it\right) }{\zeta\left(1+2it\right)\zeta_N\left(\frac{3}{2}+it\right)}T^{-\frac{1}{2}+it} \Gamma\left(-\frac{1}{2}+it\right)dt\\
=O_A\left(k^{-A}\right)
\end{multline*}
for any $A>0$. By the above,  \eqref{rhs} and \eqref{rezvan}, we obtain
\begin{multline}\label{new}
\frac{\Gamma(k-1)}{(4\pi)^{k-1}}\sum_{\substack{m\geq 1\\ \gcd\left(m,N\right)=1}}\frac{1}{m}w\left(\frac{m}{T}\right)\sum_{f\in B_{k,N}^* }\rho_f\left(m^2\right) \overline{\rho_{f}\left(n\right)}=  \frac{12 }{\left(k-1\right)N}  \mathrm{Tr}~\mathcal{T}_n\left(N,k\right)^*
\\
+\frac{1}{2\pi i}\int_{-(\log k)^2}^{(\log k)^2} \left( \sum_{f\in B_{k,N}^* }\frac{12 \zeta_N(2)}{(k-1)NZ(1,f)}  \lambda_f\left(n\right)  L\left(\frac{1}{2}+it,\mathrm{sym}^2 f \right) \right)  \\
\times \frac{\zeta_N\left(1+2it\right)T^{-\frac{1}{2}+it} \Gamma\left(-\frac{1}{2}+it\right)}{\zeta\left(1+2it\right)\zeta_N\left(\frac{3}{2}+it\right)} dt+O\left(k^{-A}\right).
\end{multline}
From \eqref{deligne}, we have $ |\lambda_f\left(n\right)| \leq \sigma(n) \ll_\epsilon n^{\epsilon} \ll_\epsilon k^\epsilon$, and we also have $Z(1,f)^{-1} \ll_\epsilon k^\epsilon$ \cite[Theorem 0.2]{Hoffstein}. Therefore we have
\begin{multline*}
\frac{1}{2\pi i}\int_{-(\log k)^2}^{(\log k)^2} \left( \sum_{f\in B_{k,N}^* }\frac{12 \zeta_N(2)}{(k-1)NZ(1,f)}  \lambda_f\left(n\right)  L\left(\frac{1}{2}+it,\mathrm{sym}^2 f \right) \right)  \\
\times \frac{\zeta_N\left(1+2it\right)T^{-\frac{1}{2}+it} \Gamma\left(-\frac{1}{2}+it\right)}{\zeta\left(1+2it\right)\zeta_N\left(\frac{3}{2}+it\right)} dt\\
\ll_\epsilon k^{-1+\epsilon} T^{-\frac{1}{2}} \int_{-(\log k)^2}^{(\log k)^2}  \sum_{f\in B_{k,N}^* } \left| L\left(\frac{1}{2}+it,\mathrm{sym}^2 f \right)\right| dt.
\end{multline*}
To simplify the notation, we let
\[
\mathcal{M}_1(k) = \int_{-(\log k)^2}^{(\log k)^2}  \sum_{f\in B_{k,N}^* } \left| L\left(\frac{1}{2}+it,\mathrm{sym}^2 f \right)\right| dt.
\]
By the above  and \eqref{new}, we have
\begin{multline}\label{lhsp}
\frac{\Gamma(k-1)}{(4\pi)^{k-1}}\sum_{\substack{m\geq 1\\ \gcd\left(m,N\right)=1}}\frac{1}{m}w\left(\frac{m}{T}\right)\sum_{f\in B_{k,N}^* }\rho_f\left(m^2\right) \overline{\rho_{f}\left(n\right)}\\
=\frac{12 }{\left(k-1\right)N}  \mathrm{Tr}~\mathcal{T}_n\left(N,k\right)^*+O_\epsilon\left(T^{-\frac{1}{2}}k^{-1+\epsilon}\mathcal{M}_1(k)\right).
\end{multline}

Finally, we average  the right-hand side of \eqref{rw} with the same weights $\frac{1}{m}w\left(\frac{m}{T}\right)$. Let
$$
S:=\sum_{\substack{m\geq 1\\ \gcd\left(m,N\right)=1}} \frac{1}{m}w\left(\frac{m}{T}\right) \sum_{LM=N} \frac{\mu\left(L\right)}{L}\sum_{l|L^{\infty},} \frac{1}{l}\Delta_{k,M}\left(m^2l^2,n\right).
$$
We analyze the contribution of $\delta\left(m^2l^2,n\right)$ by applying the Petersson formula~\eqref{peter}. Since $l|N^{\infty}$ and $\gcd\left(N,mn\right)=1$, then the condition $m^2l^2=n$ can only be met when $l=1$ and $m^2=n$. Therefore,
\begin{multline*}
 \sum_{\substack{m\geq 1\\ \gcd\left(m,N\right)=1}} \frac{1}{m}w\left(\frac{m}{T}\right) \sum_{LM=N} \frac{\mu\left(L\right)}{L}\sum_{l|L^{\infty}} \frac{1}{l}\delta\left(m^2l^2,n\right)\\
 =  \frac{1}{\sqrt{n}}w\left(\frac{\sqrt{n}}{T}\right) \sum_{LM=N} \frac{\mu\left(L\right)}{L}\delta\left(n,\square\right)= \frac{\varphi\left(N\right)w\left(\frac{\sqrt{n}}{T}\right)}{N\sqrt{n}}\delta\left(n,\square\right).
\end{multline*}
Note that by our choice of $w$, if $T \ll n^{\frac{1}{2}-\epsilon}$, then
\begin{equation}\label{sqcont}
\frac{\varphi\left(N\right)w\left(\frac{\sqrt{n}}{T}\right)}{N\sqrt{n}}\delta\left(n,\square\right)=O_A\left(k^{-A}\right).
\end{equation}
Let
\[
S^T:=\sum_{\substack{m> T^{1+\epsilon}\\ \gcd\left(m,N\right)=1}}\frac{1}{m} w\left(\frac{m}{T}\right) \sum_{LM=N} \frac{\mu\left(L\right)}{L}\sum_{l|L^{\infty},} \frac{1}{l} \left(\Delta_{k,M}\left(m^2l^2,n\right)-\delta\left(m^2l^2,n\right)\right).
\]
By \cite[Corollary 2.2]{Luo}, we have
\[
\Delta_{k,M}\left(m^2l^2,n\right)-\delta\left(m^2l^2,n\right)=O_{N,\epsilon}\left(\frac{n^{\frac{1}{4}+\epsilon} \left(ml\right)^{\frac{1}{2}+\epsilon}}{k^{\frac{5}{6}}} \right).
\]
It follows from the above and the choice of $w$ and $T$ that
$S^T=O_A\left(k^{-A}\right)$. Hence
\begin{equation}\label{splits12}
S=S_1+S_2+O_A\left(k^{-A}\right),
\end{equation} where
\begin{multline*}
S_1:= 2\pi i^{-k}\sum_{LM=N} \frac{\mu\left(L\right)}{L}\sum_{l|L^{\infty}}  \sum_{\substack{m<T^{1+\epsilon}\\\gcd\left(m,N\right)=1}} \frac{1}{ml}w\left(\frac{m}{T}\right) \\
\times \sum_{c\equiv 0\pmod{M} }\delta(c,ml) \frac{S\left(m^2l^2,n;c\right)}{c}J_{k-1}\left(\frac{4\pi ml \sqrt{n}}{c}\right),
\end{multline*}
and
\begin{multline*}
S_2:= 2\pi i^{-k} \sum_{LM=N} \frac{\mu\left(L\right)}{L}\sum_{l|L^{\infty}}  \sum_{\substack{m<T^{1+\epsilon}\\\gcd\left(m,N\right)=1}} \frac{1}{ml} w\left(\frac{m}{T}\right) \\
\times \sum_{\substack{c\equiv 0\pmod{M}\\ c\neq ml}} \frac{S\left(m^2l^2,n;c\right)}{c}J_{k-1}\left(\frac{4\pi ml \sqrt{n}}{c}\right).
\end{multline*}
In what follows, we give an asymptotic formula for $S_1$, which is  the sum over the diagonal terms  $ml=c$ where $\gcd\left(m,N\right)=1$ and $l|L^{\infty}$.  Observe that the condition $ml = c$ can only be met if $M=1$ and  $L=N$ and so we have
\[
S\left(m^2l^2,n;c\right)=S\left(0,n;c\right)=\sum_{d|\gcd\left(c,n\right)} \mu\left(\frac{c}{d}\right)d.
\]
Hence,
\begin{align*}
S_1&=2\pi i^{-k}  J_{k-1}\left(4\pi\sqrt{n}\right) \frac{\mu\left(N\right)}{N}\sum_{l|N^{\infty}}  \sum_{\substack{m<T^{1+\epsilon}\\ \gcd\left(m,N\right)=1}} \frac{1}{\left(ml\right)^2}w\left(\frac{m}{T}\right) \sum_{d|\gcd\left(ml,n\right)} \mu\left(\frac{ml}{d}\right)d\\
&=2\pi i^{-k}  J_{k-1}\left(4\pi\sqrt{n}\right) \frac{\mu\left(N\right)}{N}  \sum_{l|N^{\infty}}\frac{\mu\left(l\right)}{l^2}   \left( \sum_{\substack{m<T^{1+\epsilon}\\ \gcd\left(m,N\right)=1}} \frac{1}{m^2}w\left(\frac{m}{T}\right) \sum_{d|\gcd\left(m,n\right)} \mu\left(\frac{m}{d}\right)d   \right)
\\
&=2\pi i^{-k} J_{k-1}\left(4\pi\sqrt{n}\right) \frac{\mu\left(N\right)}{N} \zeta_N\left(2\right)^{-1}\left( \sum_{d|n} \frac{1}{d} \sum_{ \substack{h<T^{1+\epsilon}/d \\ \gcd\left(h,N\right)=1}} \frac{1}{h^2} w\left(\frac{hd}{T}\right)\mu\left(h\right)    \right)
\\
&=2\pi i^{-k} J_{k-1}\left(4\pi\sqrt{n}\right) \frac{\mu\left(N\right)}{N} \frac{1}{\zeta\left(2\right)} \left(\frac{\sigma\left(n\right)}{n}+O\left(\frac{\sigma(n)}{T}\right)\right). \numberthis \label{main}
\end{align*}
 Next, we give an upper bound for $S_2$.  Let $\beta>0$ be some positive real number and  $S_{2,\beta}$ be  the same sum as $S_2$ but subjected to $K^{\beta}<l$, namely
\begin{multline*}
S_{2,\beta}:= 2\pi i^{-k}  \sum_{LM=N} \frac{\mu\left(L\right)}{L}\sum_{\substack{l>K^{\beta} \\ l|L^{\infty}}}  \sum_{\substack{m<T^{1+\epsilon} \\ \gcd\left(m,N\right)=1} } \frac{1}{ml}w\left(\frac{m}{T}\right) \\
 \times \sum_{\substack{c\equiv 0\pmod{M}\\ c\neq ml}} \frac{S\left(m^2l^2,n;c\right)}{c}J_{k-1}\left(\frac{4\pi ml \sqrt{n}}{c}\right).
\end{multline*}
Since $N$ is fixed and $S_1$  is supported on $l|N^{\infty}$ and $\mu\left(l\right)\neq 0$, it follows from \eqref{newpeter} that for sufficiently large $k$ (e.g., $K^{\beta}>N$),
$$
S_{2,\beta}=2\pi i^{-k}  \sum_{\substack{m<T^{1+\epsilon} \\ \gcd\left(m,N\right)=1}} \frac{1}{m}w\left(\frac{m}{T}\right) \sum_{LM=N} \frac{\mu\left(L\right)}{L}\sum_{\substack{l>K^{\beta} \\ l|L^{\infty}}} \frac{1}{l} \left( \Delta_{k,M}\left(m^2l^2,n\right)-\delta\left(ml^2,n\right)\right).
$$
By \cite[Corollary 2.2]{Luo}, we have
$$
\Delta_{k,M}\left(m^2l^2,n\right)-\delta\left(m^2l^2,n\right)=O_{N,\epsilon}\left(\frac{n^{\frac{1}{4}+\epsilon} \left(ml\right)^{\frac{1}{2}+\epsilon}}{k^{\frac{5}{6}}} \right).
$$
Therefore,
\[
S_{2,\beta} \ll_{N,\epsilon}  \sum_{\substack{m<T^{1+\epsilon} \\ \gcd\left(m,N\right)=1}} \frac{1}{m}w\left(\frac{m}{T}\right)\sum_{\substack{l>K^{\beta} \\ l|N^{\infty}}} \frac{1}{l}\frac{n^{\frac{1}{4}+\epsilon} \left(ml\right)^{\frac{1}{2}+\epsilon}}{k^{\frac{5}{6}}} .
\]
By \eqref{kn}, we have
\begin{equation}\label{S2b}
S_{2,\beta} \ll_{N,\epsilon} {k^{-\frac{1}{3}+\epsilon}} \sum_{m<T^{1+\epsilon}}   \sum_{\substack{l>K^{\beta} \\ l|N^{\infty}}}  \left(ml\right)^{-\frac{1}{2}+\epsilon} =O_{N,\epsilon}\left({T^{\frac{1}{2}}k^{-\frac{1}{3}-\frac{\beta}{2}+\epsilon}}\right).
\end{equation}
Finally, we give an upper bound for $S\left(\beta\right):=S_2-S_{2,\beta}$. We split $S\left(\beta\right)$ into two ranges, each of which has a restriction on the sum over $c\equiv 0 \pmod{M}$:
\begin{enumerate}
\item $2ml<c$,
\item $c< 2ml$ and $c\neq ml$,
\end{enumerate}
and we write $S_i\left(\beta\right)$ for the sum $S\left(\beta\right)$ subjected to the $i$th condition listed above. First, we give an upper bound for $S_1\left(\beta\right)$. Assume that $2ml<c$. Then by  \eqref{large}, \eqref{trans}, and \eqref{weil}, we have
\begin{align*}
&S_1\left(\beta\right) \\
&\ll \left| \sum_{LM=N} \frac{\mu\left(L\right)}{L} \sum_{\substack{l<K^{\beta}\\ l|L^{\infty}} } \sum_{\substack{m<T^{1+\epsilon}\\ \gcd\left(m,N\right)=1}} \frac{1}{ml}w\left(\frac{m}{T}\right)\sum_{ \substack{c>2ml\\  M|c}  } \frac{S\left(m^2l^2,n;c\right)}{c}J_{k-1}\left(\frac{4\pi ml \sqrt{n}}{c}\right) \right|\\
&\ll  \sum_{\substack{l<K^{\beta}\\ l|L^{\infty}}} \sum_{\substack{m<T^{1+\epsilon}\\ \gcd\left(m,N\right)=1}} \frac{1}{m l}w\left(\frac{m}{T}\right)\sum_{c>2ml } \left|\frac{S\left(m^2l^2,n;c\right)}{c}\right| \left|J_{k-1}\left(\frac{4\pi ml \sqrt{n}}{c}\right)\right|\\
&\ll  \sum_{h<K^{\beta}M^{1+\epsilon}} \frac{1}{h}\sum_{c>2h }  \left|\frac{e^{k\left(1-\frac{h}{c}+\log\left(\frac{h}{c}\right)\right)}}{k^{\frac{1}{3}}}\right| \ll  e^{-\left(0.19\right) k}. \numberthis \label{tailb}
\end{align*}
By inequalities~\eqref{lhsp},~\eqref{sqcont}, \eqref{splits12},  \eqref{main}, \eqref{S2b}, and \eqref{tailb}, we have
\begin{multline*}
\frac{12}{(k-1)N} \mathrm{Tr}~\mathcal{T}_n\left(N,k\right)^*=
 2\pi i^{-k} J_{k-1}\left(4\pi\sqrt{n}\right) \frac{\mu\left(N\right)}{N} \frac{1}{\zeta\left(2\right)}\frac{\sigma\left(n\right)}{n} + S_2\left(\beta\right)\\
 +O\left(\sigma(n)k^{-\frac{1}{3}} T^{-1}\right)+O_\epsilon\left(T^{-\frac{1}{2}}k^{-1+\epsilon}\mathcal{M}_1(k)+{T^{\frac{1}{2}}k^{-\frac{1}{3}-\frac{\beta}{2}+\epsilon}} \right).
\end{multline*}
We use $\sigma(n) \ll_\epsilon k^\epsilon$ to make the first error term $O_\epsilon \left(k^{-\frac{1}{3}+\epsilon} T^{-1}\right)$. We then multiply the above identity by $i^k=(-1)^{\frac{k}{2}}$ and take a smooth average by
\[
\frac{1}{K^{\delta}} \sum_{k>0, k\in 2 \mathbb{Z}} \psi \left(\frac{k-1-K}{K^{\delta}}\right),
\]
 yielding
\begin{multline}\label{finalform}
 \frac{1}{K^{\delta}} \sum_{k>0, k\in 2\mathbb{Z}} \psi \left(\frac{k-1-K}{K^{\delta}}\right)\frac{12 (-1)^{\frac{k}{2}}}{(k-1)N}\mathrm{Tr}~\mathcal{T}_n\left(N,k\right)^*\\
 =\pi J_{K}\left(4\pi\sqrt{n}\right) \frac{\mu\left(N\right)}{N} \frac{1}{\zeta\left(2\right)}\frac{\sigma\left(n\right)}{n}\left(1+o(1)\right)
 + \frac{1}{K^{\delta}} \sum_{k>0, k\in 2\mathbb{Z}} \psi \left(\frac{k-1-K}{K^{\delta}}\right) i^k S_2\left(\beta\right) \\ +O_\epsilon\left(T^{-\frac{1}{2}}K^{-1-\delta+\epsilon}\sum_{|k-1-K|<K^\delta} \mathcal{M}_1(k)\right)+O_\epsilon\left({T^{\frac{1}{2}}K^{-\frac{1}{3}-\frac{\beta}{2}+\epsilon}}+  T^{-1}K^{-\frac{1}{3}+\epsilon}\right),
\end{multline}
where we applied \eqref{sta3} in Lemma~\ref{avk} to the main term.

Next, we give an upper bound for the average of  $i^k S_2\left(\beta\right)$. We first have
\begin{multline}\label{S2sum}
  \frac{1}{K^{\delta}} \sum_{k>0, k\in 2\mathbb{Z}} \psi \left(\frac{k-1-K}{K^{\delta}}\right) i^k S_2\left(\beta\right)   =2\pi \sum_{LM=N} \frac{\mu\left(L\right)}{L}\sum_{\substack{l< K^{\beta}\\ l|L^{\infty}}}  \sum_{\substack{m<T^{1+\epsilon}\\ \gcd\left(m,N\right)=1}} \frac{1}{ml}w\left(\frac{m}{T}\right)  \\
\times \sum_{\substack{c<2ml \\ M|c,~c\neq ml}  } \frac{S\left(m^2l^2,n;c\right)}{c} \frac{1}{K^{\delta}} \sum_{k>0, k\in 2\mathbb{Z}} \psi \left(\frac{k-1-K}{K^{\delta}}\right)J_{k-1}\left(\frac{4\pi ml \sqrt{n}}{c}\right).
\end{multline}
Let $x:= \frac{4\pi ml \sqrt{n}}{c}$. Then we have $x > 2\pi \sqrt{n} \gg K$, because we assumed that $K-4\pi\sqrt{n}=o\left(n^{\frac{1}{6}}\right)$. Let $\eta>1-3\delta>0$ be a constant to be chosen later. Note that
\[
|x-K|<K^{\eta+\delta}
\]
implies that
\[
\left|\frac{ml}{c} - 1\right| < 2K^{\eta+\delta-1}.
\]
We assume that $\eta$ is chosen sufficiently close to $1-3\delta$ so that the exponent $\eta+\delta-1$ is negative. In order to apply Lemma \ref{avk}, we now split the sum \eqref{S2sum} into two ranges
 \begin{enumerate}
\item $c< 2ml$ and  $ |\frac{ml}{c}-1| > 2K^{\eta+\delta-1}$, and
\item $c< 2ml$ and $ |\frac{ml}{c}-1| < 2K^{\eta+\delta-1}$,
\end{enumerate}
each of which has a restriction on the sum over $c\equiv 0 \pmod{M}$. We denote the sums by $S_{2,1}$ and $S_{2,2}$ respectively, so that \eqref{S2sum} is equal to $S_{2,1}+S_{2,2}$. By \eqref{sta1}, \eqref{S2sum}, and  \eqref{weil}, we have
 \begin{multline}\label{S21}
S_{2,1}  \ll_{A,\eta,\delta}  \sum_{\substack{l<K^{\beta}\\ l|N^{\infty}}}  \sum_{\substack{m<T^{1+\epsilon}\\ \gcd\left(m,N\right)=1}} \frac{1}{ml}  \sum_{ c<2ml } \left|\frac{S\left(m^2l^2,n;c\right)}{c}\right|
 K^{-A}
 \\
  \ll_{A,\eta,\delta,\epsilon} K^{-A} \sum_{\substack{l<K^{\beta}\\ l|N^{\infty}}}  \sum_{\substack{m<T^{1+\epsilon} \\ \gcd\left(m,N\right)=1}} \frac{1}{ml}  \sum_{ c<2ml }  \sqrt{\gcd\left(m,n,c\right)} c^{-\frac{1}{2}+\epsilon}
 \\
 \ll_{A,\eta,\delta,\epsilon} T^{\frac{1}{2}+\epsilon}K^{-A}.
\end{multline}
For $S_{2,2}$, we apply \eqref{weil} and \eqref{sta2}, yielding
 \begin{align*}
S_{2,2}&\ll  \sum_{\substack{l<K^{\beta}\\ l|N^{\infty}}}  \sum_{\substack{m<T^{1+\epsilon} \\ \gcd\left(m,N\right)=1}} \frac{1}{ml}  \sum_{  \substack{|\frac{ml}{c}-1| < 2K^{\eta+\delta-1}\\c\neq ml} } \left|\frac{S\left(m^2l^2,n;c\right)}{c}\right| K^{-\frac{1}{3}}
\\
&\ll_{\epsilon,N} K^{-\frac{1}{3}}  \sum_{\substack{l<K^{\beta}\\ l|N^{\infty}}}  \sum_{m<T^{1+\epsilon}} \frac{1}{ml} \sqrt{\gcd\left(m,n\right)} \sum_{  \substack{|\frac{ml}{c}-1| < 2K^{\eta+\delta-1}\\c\neq ml} }  c^{-\frac{1}{2}+\epsilon}
\\
&\ll_{\epsilon,N} K^{-\frac{1}{3}}  \sum_{\substack{l<K^{\beta}\\ l|N^{\infty}}}  \sum_{m<T^{1+\epsilon}} \frac{1}{ml} \sqrt{\gcd\left(m,n\right)} \frac{\left(ml\right)^{\frac{1}{2}+\epsilon}}{K^{1-\eta-\delta}} \\
&\ll_{\epsilon,N} K^{-\frac{1}{3}}  \sum_{\substack{l<K^{\beta}\\ l|N^{\infty}}}  \sum_{m<T^{1+\epsilon}}  \sqrt{\gcd\left(m,n\right)} \frac{\left(ml\right)^{-\frac{1}{2}+\epsilon}}{K^{1-\eta-\delta}} \\
 &\ll_{\epsilon,N} T^{\frac{1}{2}+\epsilon}K^{-\frac{1}{3}-1+\eta+\delta}. \numberthis \label{S22}
\end{align*}
Therefore, by inequalities \eqref{finalform}, \eqref{S21}, and \eqref{S22}, we have
\begin{multline*}
 \frac{1}{K^{\delta}} \sum_{k>0, k\in 2\mathbb{Z}} \psi \left(\frac{k-1-K}{K^{\delta}}\right)\frac{12 (-1)^{\frac{k}{2}}}{(k-1)N}\mathrm{Tr}~\mathcal{T}_n\left(N,k\right)^*\\
 =\pi J_{K}\left(4\pi\sqrt{n}\right) \frac{\mu\left(N\right)}{N} \frac{1}{\zeta\left(2\right)}\frac{\sigma\left(n\right)}{n}\left(1+o(1)\right)\\ +O_{A,\eta,\delta,\epsilon}\left(T^{-\frac{1}{2}}K^{-1-\delta+\epsilon}\sum_{|k-K|<K^\delta} \mathcal{M}_1(k)+{T^{\frac{1}{2}}K^{-\frac{1}{3}-\frac{\beta}{2}+\epsilon}}+T^{\frac{1}{2}+\epsilon}K^{-\frac{1}{3}-1+\eta+\delta}+T^{-1}K^{-\frac{1}{3}+\epsilon} \right).
 \end{multline*}
 In order to bound the contribution from $\sum \mathcal{M}_1(k)$, we recall from \cite{MR1990480} that
\[
\sum_{|k-K|<K^\theta} \int_{-(\log k)^2}^{(\log k)^2}  \sum_{f\in B_{k,N}^* } \left| L\left(\frac{1}{2}+it,\mathrm{sym}^2 f \right)\right|^2 dt  \ll_{\epsilon,\theta} K^{1+\theta+\epsilon}
\]
 provided that $\theta>1/3$. This in particular implies that
\begin{multline*}
\left(\sum_{|k-K|<K^\delta} \mathcal{M}_1(k)\right)^2 \\
\leq \sum_{|k-K|<K^\delta}\sum_{f\in B_{k,N}^* } \int_{-(\log k)^2}^{(\log k)^2}   dt\sum_{|k-K|<K^\theta} \int_{-(\log k)^2}^{(\log k)^2}  \sum_{f\in B_{k,N}^* } \left| L\left(\frac{1}{2}+it,\mathrm{sym}^2 f \right)\right|^2 dt\\
\ll_{\epsilon,\theta} K^{2+\theta+\delta+\epsilon},
\end{multline*}
by the Cauchy--Schwarz inequality. We therefore have
\[
\sum_{|k-K|<K^\delta} \mathcal{M}_1(k) \ll_\epsilon K^{\frac{7}{6}+\frac{\delta}{2}+\epsilon},
\]
and so by choosing $\beta$ large enough, $T = K^{\frac{1}{2}+\frac{3}{2}\delta}$, and $\eta=1-3\delta+\epsilon$, we conclude that
\begin{multline*}
 \frac{1}{K^{\delta}} \sum_{k>0, k\in 2\mathbb{Z}}\frac{1}{k-1} \psi \left(\frac{k-1-K}{K^{\delta}}\right)\frac{12 (-1)^{\frac{k}{2}}}{N}\mathrm{Tr}~\mathcal{T}_n\left(N,k\right)^*\\
 =\pi J_{K}\left(4\pi\sqrt{n}\right) \frac{\mu\left(N\right)}{N} \frac{1}{\zeta\left(2\right)}\frac{\sigma\left(n\right)}{n}\left(1+o(1)\right) +O_{\epsilon,\delta}(K^{-\frac{1}{12}-\frac{5}{4}\delta+\epsilon}).
 \end{multline*}
In order to complete the proof, note that
\[
\frac{1}{k-1} - \frac{1}{K} = \frac{K-k+1}{(k-1)K} = O(K^{\delta-2}),
\]
and that
\[
\mathrm{Tr}~\mathcal{T}_n\left(N,k\right)^* \ll \sigma(n) K.
\]
So the error that occurs when replacing $\frac{1}{k-1}$ by $\frac{1}{K}$ in the left hand side of the equation is
\[
\ll \sigma(n)K^{\delta-1}.
\]
Assuming that $\frac{1}{5}<\delta <\frac{1}{3}$ and  rearranging lead to the final expression in Theorem \ref{noweight}.
\end{proof}

\subsection{Proof of Theorem~\ref{rmwe}}
\begin{proof}
The method of the proof is similar to the proof of Theorem~\ref{weylpeter}. Let $U_{n}\left(x\right)$ be the $n$th Chebyshev polynomial of the second
kind. A quick computation shows that
\[
\int_{-2}^{2} U_n\left(\frac{x}{2}\right) d\mu_p\left(x\right)=\left\{\begin{array}{cl}  \frac{1}{p^{\frac{n}{2}}}& \text{if }n \text{ is  even}\\ 0 & \text{otherwise.}\end{array}\right.
\]
By Theorem~\ref{noweight}, there exists  $k_n\in  [\lfloor 4\pi \sqrt{p^n} \rfloor - p^{\frac{n}{6}}, \lfloor 4\pi \sqrt{p^n} \rfloor +p^{\frac{n}{6}} ] $ such that
\[
  \int_{-2}^{2} U_n\left(\frac{x}{2}\right) d\mu_p\left(x\right)-\int_{-2}^{2} U_n\left(\frac{x}{2}\right) d\mu_{k_n,N}^* \gg k_n^{-\frac{1}{3}}.
\]
By the above inequality  and by integration by parts with the upper bound $|U_n'(x)|\ll n^2$, we have
\[
D\left(\mu_{k_n,N}^*, \mu_{p}\right)  \gg  \frac{1}{n^2 k_n^{\frac{1}{3}}}.
\]
We complete the proof of Theorem \ref{rmwe} by observing that $n \ll \log k_n  $.
\end{proof}

\section{The Eichler--Selberg trace formula}\label{selberg}
The main purpose of this section is to prove Theorem~\ref{selbergmain1} and Theorem~\ref{selbergmain2}. We first recall the Eichler--Selberg trace formula. We use the version from  \cite[Theorem 10]{MS} (see also \cite{serre}).
\begin{theorem}[The Eichler--Selberg trace formula] \label{ES}
For every positive integer $n \geq 1$, the trace $\mathrm{Tr}$ of $\mathcal{T}_n=\mathcal{T}_n\left(k,N\right)$ acting on $S_k\left(N\right)$ is given by
\[
\mathrm{Tr}~ \mathcal{T}_n = A_1\left(n,k,N\right) + A_2\left(n,k,N\right) + A_3\left(n,k,N\right) + A_4\left(n,k\right),
\]
where $A_i\left(n,k\right)$'s are as follows:
\[
A_1\left(n,k,N\right) = \left\{\begin{array}{cl}  \frac{k-1}{12} \nu \left(N\right)\frac{1}{\sqrt{n}}& \text{if }n \text{ is a square}\\ 0 & \text{otherwise}\end{array}\right. \text{ where } \nu\left(N\right) = N\prod_{p|N} \left(1+\frac{1}{p}\right).
\]
\[
A_2\left(n,k,N\right)= -\frac{1}{2}n^{-\frac{k-1}{2}} \sum_{t\in \mathbb{Z}, ~ t^2<4n} \frac{\rho_{t,n}^{k-1} - \bar{\rho}_{t,n}^{k-1}}{\rho_{t,n} - \bar{\rho}_{t,n}} \sum_{f} h_w \left(\frac{t^2-4n}{f^2}\right) \mu\left(t,f,n,N\right),
\]
where $\rho_{t,n}$ and $\bar{\rho}_{t,n}$ are zeros of $x^2-tx+n$, and the inner sum runs over all positive divisors $f$ of $t^2-4n$ such that $\left(t^2-4n\right)/f^2 \in \mathbb{Z}$ is congruent to $0$ or $1 \pmod{4}$. The function $\mu\left(t,f,n,N\right)$ is given by
\[
\mu\left(t,f,n,N\right) = \frac{\nu\left(N\right)}{\nu\left(N/N_f\right)} M\left(t,n,NN_f\right),
\]
where $N_f = \gcd\left(N,f\right)$ and $M\left(t,n,K\right)$ denotes the number of solutions of the congruence $x^2 -tx+n\equiv 0 \pmod{K}$. Next,
\[
A_3\left(n,k,N\right) = - n^{-\frac{k-1}{2}} \sum_{d|n,~0 <d \leq \sqrt{n}} d^{k-1} \sum_{c|N, \gcd\left(c,\frac{N}{c}\right)| \gcd\left(N, \frac{n}{d}-d\right)} \varphi \left(\gcd\left(c,\frac{N}{c}\right)\right).
\]
Here, $\varphi$ is Euler's totient function, and in the first summation, if there is a contribution from the term $d=\sqrt{n}$, it should be multiplied by $\frac{1}{2}$. Finally,
\[
A_4\left(n,k\right) = \left\{\begin{array}{cl}n^{-\frac{1}{2}} \sum_{t|n} t & \text{if } k=2, \\0 &\text{otherwise.}\end{array}\right.
\]
\end{theorem}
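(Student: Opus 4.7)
The plan is to follow the classical Eichler--Selberg approach: construct an automorphic kernel $K_n(z,w)$ whose action on $S_k(N)$ coincides with $\mathcal{T}_n$, and then compute $\mathrm{Tr}\,\mathcal{T}_n = \int_{\Gamma_0(N)\backslash\mathbb{H}} K_n(z,z)\, y^{k-2}\,dx\,dy$. Concretely, one takes $K_n(z,w)$ to be the sum over integer $2\times 2$ matrices $\gamma$ of determinant $n$ (with entries $a,b,c,d$) of a weight-$k$ point-pair invariant, essentially $(cz\bar w + dz - a\bar w - b)^{-k}$, normalized to be $\Gamma_0(N)$-invariant in each variable. The series is absolutely convergent for $k\geq 4$, and the weight-two case is handled separately at the end.

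The bulk of the argument is the decomposition of the diagonal integral into contributions from $\Gamma_0(N)$-conjugacy classes of matrices $\gamma$, indexed by the trace $t$ and the discriminant $t^2-4n$. The scalar matrices $\gamma=\pm\sqrt n\,I$, which exist only when $n$ is a perfect square, produce $A_1$: the integrand is constant, so the result is proportional to the covolume of $\Gamma_0(N)\backslash\mathbb{H}$, namely $\nu(N)/12$, together with the weight factor $k-1$ coming from the pairing. The elliptic matrices ($t^2<4n$) produce $A_2$: each such $\gamma$ fixes a unique point in $\mathbb{H}$, and local coordinates there yield the factor $(\rho_{t,n}^{k-1}-\bar\rho_{t,n}^{k-1})/(\rho_{t,n}-\bar\rho_{t,n})$; the number of $\Gamma_0(N)$-equivalence classes with data $(t,f)$ is the Hurwitz-type class number $h_w((t^2-4n)/f^2)$ multiplied by the local correction $\mu(t,f,n,N)=\nu(N)/\nu(N/N_f)\cdot M(t,n,NN_f)$ arising from the reduction of the fixed point modulo $NN_f$.

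The hyperbolic and parabolic classes ($t^2\geq 4n$, so $\gamma$ has rational eigenvalues $d$ and $n/d$) produce $A_3$. Here the stabilizer is infinite cyclic or unipotent, so the orbital integral must be computed after truncation to a fundamental domain of the stabilizer; grouping by the eigenvalue $d\leq\sqrt n$ and tracking the local conditions at primes $p\mid N$ yields the divisor sum with the $\gcd$-restricted Euler factor $\sum_{c\mid N,\ \gcd(c,N/c)\mid \gcd(N,n/d-d)}\varphi(\gcd(c,N/c))$. The weight-two correction $A_4$ arises because in this case the defining kernel series is only conditionally convergent: following Hecke, one inserts a factor $y^s$, analytically continues in $s$, and extracts the residue, producing the extra $n^{-1/2}\sum_{t\mid n}t$ from the Eisenstein component of the spectral decomposition.

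The principal obstacle is the bookkeeping at primes dividing $N$: each $SL_2(\mathbb{Z})$-conjugacy class of integer matrices of determinant $n$ splits into several $\Gamma_0(N)$-classes, and establishing the precise form of $\mu(t,f,n,N)$ requires a local analysis (via the Chinese Remainder Theorem at primes $p\mid NN_f$) of the fixed-point scheme $x^2-tx+n\equiv 0 \pmod{NN_f}$. Additional care is needed to match the weight conventions of $h_w$ (with its $1/2$ and $1/3$ at discriminants $-4$ and $-3$) to the orbifold stabilizer counts at the elliptic points of $\Gamma_0(N)\backslash\mathbb{H}$, and to justify the interchange of summation and integration in the elliptic and hyperbolic contributions, where absolute convergence is lost.
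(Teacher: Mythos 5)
The paper does not prove this statement at all: Theorem~\ref{ES} is quoted verbatim from Murty and Sinha \cite[Theorem 10]{MS} (see also \cite{serre}), and its proof is due to Eichler, Selberg, Hijikata and Oesterl\'e. So there is no in-paper argument to compare against; what you have written is an outline of the classical kernel-function proof (sum the weight-$k$ point-pair invariant over integral matrices of determinant $n$, integrate on the diagonal, decompose by $\Gamma_0(N)$-conjugacy classes into scalar, elliptic, hyperbolic/parabolic pieces, and regularize the weight-two case \`a la Hecke). That is indeed the correct strategy and matches the structure of the cited sources, so as a roadmap it is sound.

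Two caveats if this were to stand as a proof rather than a pointer to the literature. First, your parenthetical ``$t^2\geq 4n$, so $\gamma$ has rational eigenvalues $d$ and $n/d$'' is false as stated: for $t^2>4n$ with $t^2-4n$ not a perfect square the eigenvalues are irrational, and a genuine (and somewhat miraculous) step of the holomorphic trace formula is that these classes contribute exactly zero; this vanishing must be proved, not assumed, and it is why $A_3$ ranges only over divisors $d\mid n$. Second, the content of the theorem in the form used here --- the precise local factor $\mu(t,f,n,N)=\frac{\nu(N)}{\nu(N/N_f)}M(t,n,NN_f)$, the matching of $h_w$ to orbifold stabilizers, and the $\gcd$-restricted Euler sum in $A_3$ --- lives entirely in the ``bookkeeping'' you defer to a final paragraph. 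Flagging these computations as obstacles is honest but leaves the proof incomplete; the paper sidesteps all of this by citation, which for a recalled classical formula is the appropriate move.
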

To relate the trace of $\mathcal{T}_n$ acting on $S_k\left(N\right)$ and the trace of its restriction $\mathcal{T}_n^*$ to $S_k\left(N\right)^*$, one may use Atkin--Lehner decomposition for squarefree integers $N$ to derive (see for instance, \cite[Equation (2)]{MR1604056})
\[
\mathrm{Tr}~\mathcal{T}_n\left(k,N\right) = \sum_{d|N} \sigma\left(N/d\right) \mathrm{Tr}~\mathcal{T}_n^*\left(d,k\right),
\]
and by M\"obius inversion, this implies that
\begin{equation}\label{mobius}
\mathrm{Tr}~\mathcal{T}_n^*\left(N,k\right) = \sum_{d|N} \sigma\left(N/d\right)\mu\left(N/d\right)\mathrm{Tr}~\mathcal{T}_n\left(d,k\right).
\end{equation}
Therefore we have the following.
\begin{lemma}\label{selbergnew}
Assume that $N$ is a squarefree integer. For every positive integer $n \geq 1$, the trace $\mathrm{Tr}$ of $\mathcal{T}_n=\mathcal{T}_n\left(k,N\right)$ restricted to $S_k\left(N\right)^*$ is given by
\[
\mathrm{Tr}~ \mathcal{T}_n^* = B_1\left(n,k,N\right) + B_2\left(n,k,N\right) + B_3\left(n,k,N\right) + B_4\left(n,k,N\right),
\]
where $B_i\left(n,k\right)$'s are as follows:
\[
B_1\left(n,k,N\right) = \left\{\begin{array}{cl}  \frac{k-1}{12} \varphi \left(N\right)\frac{1}{\sqrt{n}}& \text{if }n \text{ is a square,}\\ 0 & \text{otherwise.}\end{array}\right.
\]
\[
B_2\left(n,k,N\right)= -\frac{1}{2}n^{-\frac{k-1}{2}} \sum_{t\in \mathbb{Z}, ~ t^2<4n} \frac{\rho_{t,n}^{k-1} - \bar{\rho}_{t,n}^{k-1}}{\rho_{t,n} - \bar{\rho}_{t,n}} \sum_{f} h_w \left(\frac{t^2-4n}{f^2}\right) \tilde{\mu}\left(t,f,n,N\right),
\]
where $\rho_{t,n}$ and $\bar{\rho}_{t,n}$ are zeros of $x^2-tx+n$, and the inner sum runs over all positive divisors of $t^2-4n$ such that $\left(t^2-4n\right)/f^2 \in \mathbb{Z}$ is congruent to $0$ or $1 \pmod{4}$. The function $\tilde{\mu}\left(t,f,n,N\right)$ is given by
\[
\tilde{\mu}\left(t,f,n,N\right) = \sum_{d|N} \sigma\left(N/d\right)\mu\left(N/d\right)\mu\left(t,f,n,d\right).
\]
\[
B_3\left(n,k,N\right) =  \left\{\begin{array}{cl}  - n^{-\frac{k-1}{2}} \sum_{d|n,~0 <d \leq \sqrt{n}} d^{k-1} & \text{if }N=1,\\ 0 & \text{otherwise.}\end{array}\right.
\]
In the first summation, if there is a contribution from the term $d=\sqrt{n}$, it should be multiplied by $\frac{1}{2}$.
\[
B_4\left(n,k,N\right) = \left\{\begin{array}{cl}\mu\left(N\right)n^{-\frac{1}{2}} \sum_{t|n} t & \text{if } k=2, \\0 &\text{otherwise.}\end{array}\right.
\]
\end{lemma}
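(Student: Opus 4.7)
The plan is to substitute the Eichler--Selberg trace formula of Theorem~\ref{ES} directly into the M\"obius inversion identity~\eqref{mobius}, and then simplify the four resulting sums. Setting
\[
B_i(n,k,N) := \sum_{d\mid N}\sigma(N/d)\mu(N/d)\,A_i(n,k,d),\qquad i=1,2,3,4,
\]
one has $\mathrm{Tr}~\mathcal{T}_n^*(k,N)=\sum_{i=1}^4 B_i(n,k,N)$, and the whole task reduces to evaluating, for squarefree $N$, a handful of elementary arithmetic convolutions. The definition of $\tilde\mu(t,f,n,N)$ is arranged precisely so that $B_2$ requires no computation beyond interchanging the outer $d\mid N$ sum with the sums over $t$ and $f$.

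For $B_1$, the only non-trivial ingredient is the multiplicative sum
\[
S_1(N) := \sum_{d\mid N}\sigma(N/d)\mu(N/d)\,\nu(d).
\]
Both $\sigma$, $\mu$ and $\nu$ are multiplicative and $N$ is squarefree, so $S_1$ is multiplicative, and a one-line check at a prime $p$ gives $S_1(p)=\sigma(p)\mu(p)\nu(1)+\nu(p)=-2+(p+1)=p-1=\varphi(p)$; hence $S_1(N)=\varphi(N)$, producing the stated $B_1$.

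For $B_3$, the key observation is that since $d\mid N$ is squarefree, every decomposition $d=c\cdot(d/c)$ is coprime, so $\gcd(c,d/c)=1$ and the divisibility condition in the inner sum of $A_3(n,k,d)$ is automatic, while $\varphi(\gcd(c,d/c))=1$. Thus the inner $c$-sum collapses to $\sum_{c\mid d}1=\sigma(d)$ and
\[
B_3(n,k,N)=-n^{-\frac{k-1}{2}}\!\!\sum_{\substack{d_0\mid n\\ 0<d_0\le\sqrt n}}\!d_0^{k-1}\sum_{d\mid N}\sigma(N/d)\mu(N/d)\sigma(d).
\]
The inner sum is multiplicative in $N$ and equals $\sigma(p)\mu(p)+\sigma(p)=-2+2=0$ at every prime, hence vanishes for $N>1$ and equals $1$ for $N=1$, matching the claim. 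The $B_4$ case is analogous but even simpler: $A_4(n,k)$ is independent of the level, so $B_4(n,k,N)=A_4(n,k)\sum_{d\mid N}\sigma(N/d)\mu(N/d)=\mu(N)A_4(n,k)$, where the sum at each prime contributes $\sigma(p)\mu(p)+1=-1$ and hence produces $\mu(N)$ in the squarefree case.

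There is essentially no technical obstacle: the whole argument is bookkeeping once~\eqref{mobius} is in hand. If anything, the mildly delicate point is $B_3$, where one must notice that squarefreeness of $d$ trivializes the $\gcd$ condition inside $A_3$ and reduces the inner sum to the divisor function; all other computations are one-line verifications of multiplicative identities on the squarefree lattice of divisors of~$N$.
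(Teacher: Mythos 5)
Your proposal is correct and follows exactly the route the paper takes: the paper's proof is the one-line remark that the lemma "follows from Theorem~\ref{ES} and \eqref{mobius}," and your argument simply supplies the omitted bookkeeping (the convolution identities $\sum_{d\mid N}\sigma(N/d)\mu(N/d)\nu(d)=\varphi(N)$, $\sum_{d\mid N}\sigma(N/d)\mu(N/d)\sigma(d)=\delta_{N=1}$, and $\sum_{d\mid N}\sigma(N/d)\mu(N/d)=\mu(N)$, all verified prime-by-prime on the squarefree divisor lattice), each of which checks out.
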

\begin{proof}
This follows from Theorem \ref{ES} and \eqref{mobius}.
\end{proof}

\subsection{Analytic setup}
Let $\phi$ be a positive even rapidly decaying function whose Fourier transform $\hat{\phi}$ is supported in $\left\lbrack-\frac{1}{100},\frac{1}{100}\right\rbrack$. In this section, we study the second moment of $B_2$:
\begin{equation}\label{sufficient}
\sum_{k>0, k\in 2\mathbb{Z}} \phi\left(\frac{k-1}{T}\right) \left|B_2\left(n,k,N\right)\right|^2 = \frac{1}{2}\sum_{ k\in 2\mathbb{Z}} \phi\left(\frac{k-1}{T}\right) \left|B_2\left(n,k,N\right)\right|^2,
\end{equation}
where we used $B_2\left(n,k,N\right) = -B_2\left(n,2-k,N\right)$.

We first collect some preliminary estimates.
\begin{lemma}\label{estimates}
We have
\begin{equation}\label{dim}
|S_k\left(N\right)^*|= \frac{k-1}{12}\varphi\left(N\right) + O_N\left(1\right),
\end{equation}
%\begin{equation}\label{A_3}
%B_3\left(n,k,N\right) \ll \sigma_0\left(n\right),
%\end{equation}
and
\begin{equation}\label{A_2}
B_2\left(n,k,N\right) \ll_N \sigma_1\left(n\right).
\end{equation}
\end{lemma}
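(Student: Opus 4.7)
My plan is to prove (\ref{dim}) by specializing the Eichler--Selberg newform trace formula of Lemma \ref{selbergnew} at $n = 1$, and to prove (\ref{A_2}) by uniformly bounding each factor in the definition of $B_2$.

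For (\ref{dim}), since $\mathcal{T}_1^*$ acts as the identity on $S_k(N)^*$, one has $|S_k(N)^*| = \mathrm{Tr}~\mathcal{T}_1^* = B_1(1,k,N) + B_2(1,k,N) + B_3(1,k,N) + B_4(1,k,N)$. The only term growing with $k$ is $B_1(1,k,N) = \frac{k-1}{12}\varphi(N)$, because $1$ is a square. The elliptic sum $B_2(1,k,N)$ runs over $t^2 < 4$, i.e.\ $t \in \{0,\pm 1\}$, with only finitely many admissible $f$; since $\rho_{t,1}$ has modulus $1$, one has
\[
\left|\frac{\rho_{t,1}^{k-1}-\bar\rho_{t,1}^{k-1}}{\rho_{t,1}-\bar\rho_{t,1}}\right| \le \frac{2}{\sqrt{4-t^2}} = O(1)
\]
uniformly in $k$, so $B_2(1,k,N) = O_N(1)$. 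The hyperbolic term $B_3(1,k,N)$ vanishes for $N > 1$ and equals $-1/2$ for $N = 1$, while $B_4(1,k,N)$ contributes only at $k=2$. Summing these gives (\ref{dim}).

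For (\ref{A_2}), I bound each ingredient in the formula for $B_2(n,k,N)$. Since $\rho_{t,n}\bar\rho_{t,n} = n$ and $t^2 < 4n$, both roots have modulus $\sqrt{n}$ and $|\rho_{t,n}-\bar\rho_{t,n}| = \sqrt{4n-t^2}$, yielding
\[
\left| n^{-(k-1)/2}\,\frac{\rho_{t,n}^{k-1}-\bar\rho_{t,n}^{k-1}}{\rho_{t,n}-\bar\rho_{t,n}} \right| \le \frac{2}{\sqrt{4n-t^2}}.
\]
Dirichlet's class number formula combined with the standard bound $L(1,\chi_{-d}) \ll \log d$ gives $h_w(-d) \ll \sqrt{d}\log d$. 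Using this together with the estimate $|\tilde\mu(t,f,n,N)| = O_N(1)$ recorded in the discussion following the definition of $D_N(t,n)$, and noting that the number of admissible $f$ is $O_\epsilon(n^\epsilon)$, the inner sum over $f$ is $\ll_{N,\epsilon} \sqrt{4n-t^2}\,n^\epsilon$. Summing over the $O(\sqrt{n})$ integers $t$ with $t^2 < 4n$ then yields $|B_2(n,k,N)| \ll_{N,\epsilon} n^{1/2+\epsilon}$, which is absorbed by $\sigma_1(n) \ge n$.

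Neither part presents a serious obstacle. The only subtlety is that $B_2$ involves the $(k-1)$-st power of the roots, but the normalization $n^{-(k-1)/2}$ exactly cancels this growth since $|\rho_{t,n}| = \sqrt{n}$, leaving an absolutely bounded ratio times the spectral gap $1/\sqrt{4n-t^2}$; once this is observed, the remaining estimates are standard.
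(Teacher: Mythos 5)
Your proof is correct, and for the bound \eqref{A_2} it takes a genuinely different route from the paper. For \eqref{dim} you and the paper do essentially the same thing: the paper cites the dimension formula of Murty--Sinha (their Theorem 13) together with the M\"obius inversion \eqref{mobius}, which is exactly the trace formula at $n=1$ that you work out by hand; your direct computation of $B_1,\dots,B_4$ at $n=1$ is a self-contained version of that citation. For \eqref{A_2} the paper bounds the oscillating ratio by $\frac{2}{\sqrt{4n-t^2}}\le 2$, \emph{discards} the factor $1/\sqrt{4n-t^2}$, and then invokes the aggregate estimate $\sum_{t^2<4n}\sum_f h_w\left(\frac{t^2-4n}{f^2}\right)\ll\sigma_1(n)$ from Murty--Sinha's Lemma~16, which is in essence the Hurwitz--Kronecker class number relation; no analytic input on individual class numbers is needed. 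You instead retain the factor $1/\sqrt{4n-t^2}$ and bound each class number individually by $\ll_\epsilon d^{1/2+\epsilon}$, obtaining $B_2(n,k,N)\ll_{N,\epsilon} n^{1/2+\epsilon}$, which is strictly stronger than $\sigma_1(n)\geq n$ and is the same pointwise-bound technique the paper itself deploys later for the second moment in Theorem~\ref{arith}. The one point to state carefully is that for non-fundamental discriminants Dirichlet's formula must be replaced by the class number formula for orders (as in the paper's use of \eqref{hur} plus Gr\"onwall), which costs an extra $\log\log$ factor but still gives $h_w(-d)\ll_\epsilon d^{1/2+\epsilon}$; with that caveat your argument is complete.
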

\begin{proof}
The asymptotic \eqref{dim} follows from  \cite[Theorem 13]{MS} and \eqref{mobius}.

To prove \eqref{A_2}, note that
\[
\left|n^{-\frac{k-1}{2}} \frac{\rho_{t,n}^{k-1} - \bar{\rho}_{t,n}^{k-1}}{\rho_{t,n} - \bar{\rho}_{t,n}}\right| \leq \frac{2}{|\rho_{t,n} - \bar{\rho}_{t,n}|} =\frac{2}{\sqrt{4n-t^2}} \leq 2.
\]
Therefore
\[
|B_2\left(n,k,N\right)| \leq 2\sum_{t^2<4n}\sum_{f} h_w \left(\frac{t^2-4n}{f^2}\right) \tilde{\mu}\left(t,f,n,N\right) \ll_N \sigma_1\left(n\right),
\]
where we combined Lemma 16 \cite{MS} and the trivial upper bound $\tilde{\mu}\left(t,f,n,N\right) \ll_N 1$ in the last estimate.
\end{proof}

For $t\in \mathbb{Z}$ such that $t^2<4n$, define $0<\theta_{t,n}<\pi$ by
\[
\sqrt{n}e^{i\theta_{t,n}} = \frac{1}{2}\left(t+i\sqrt{4n-t^2}\right).
\]
We record some trivial estimates regarding $\theta_{t,n}$'s.
\begin{lemma}\label{theta}
For an integer $t$ such that $t^2 <n$, we have
\[
\pi-\frac{1}{2\sqrt{n}}\theta_{t,n} \geq \frac{1}{2\sqrt{n}},
\]
and
\[
\theta_{t,n} - \theta_{t+1,n} \geq \frac{1}{2\sqrt{n}}.
\]
\end{lemma}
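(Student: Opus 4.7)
The plan is to read off everything from the defining identity
\[
\sqrt{n}e^{i\theta_{t,n}} = \tfrac{1}{2}\bigl(t+i\sqrt{4n-t^2}\bigr),
\]
which gives $\cos\theta_{t,n}=t/(2\sqrt{n})$ and $\sin\theta_{t,n}=\sqrt{4n-t^2}/(2\sqrt{n})$. (I read the first display of the lemma as asserting $\theta_{t,n}\geq\frac{1}{2\sqrt{n}}$ and $\pi-\theta_{t,n}\geq\frac{1}{2\sqrt{n}}$, since the hypothesis $t^2<n$ immediately forces $\theta_{t,n}$ to stay in a short sub-arc around $\pi/2$.)

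For the first pair of inequalities, I would use that $t^2<n$ yields $|\cos\theta_{t,n}|=|t|/(2\sqrt{n})<1/2$, hence $\theta_{t,n}\in(\pi/3,2\pi/3)$. In particular both $\theta_{t,n}$ and $\pi-\theta_{t,n}$ are at least $\pi/3$, and since $\pi/3>1/(2\sqrt{n})$ holds for every $n\geq 1$ (indeed $\sqrt{n}\geq 1\geq 3/(2\pi)$), both lower bounds follow.

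For the spacing inequality I would apply the mean value theorem to the cosine function. Since $\cos$ is strictly increasing on $(\pi/3,2\pi/3)$ going from right to left, and $\cos\theta_{t+1,n}=\frac{t+1}{2\sqrt{n}}>\frac{t}{2\sqrt{n}}=\cos\theta_{t,n}$, we have $\theta_{t+1,n}<\theta_{t,n}$. By the mean value theorem there is $\xi$ between $\theta_{t+1,n}$ and $\theta_{t,n}$ with
\[
\frac{1}{2\sqrt{n}} \;=\; \cos\theta_{t+1,n}-\cos\theta_{t,n} \;=\; \sin\xi\,\bigl(\theta_{t,n}-\theta_{t+1,n}\bigr),
\]
and since $\sin\xi\leq 1$ this rearranges to $\theta_{t,n}-\theta_{t+1,n}\geq\frac{1}{2\sqrt{n}}$.

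There is no real obstacle here: the only thing to check carefully is that $\theta_{t+1,n}$ is actually defined, i.e. $(t+1)^2<4n$. That follows from $t^2<n$ and $n\geq 1$, which gives $(t+1)^2\leq t^2+2|t|+1<n+2\sqrt{n}+1\leq 4n$. Everything else is trigonometry and a one-line application of the mean value theorem.
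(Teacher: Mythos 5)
Your proof is correct, and your reading of the garbled first display (as the two-sided bound $\frac{1}{2\sqrt{n}}\leq\theta_{t,n}\leq\pi-\frac{1}{2\sqrt{n}}$) matches how the lemma is used later. Your route differs from the paper's in both halves. For the first inequality the paper simply observes $\sin\theta_{t,n}=\frac{\sqrt{4n-t^2}}{2\sqrt{n}}\geq\frac{1}{2\sqrt{n}}$ and uses $\theta\geq\sin\theta$ together with $\pi-\theta\geq\sin\theta$; for the spacing it computes $\sin\left(\theta_{t,n}-\theta_{t+1,n}\right)$ exactly by expanding $e^{i\left(\theta_{t,n}-\theta_{t+1,n}\right)}$ and rationalizing, arriving at $\frac{2t+1}{\left(t+1\right)\sqrt{4n-t^2}+t\sqrt{4n-\left(t+1\right)^2}}\geq\frac{1}{2\sqrt{n}}$. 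Your mean-value-theorem argument for the spacing is cleaner, avoids that computation, and is equally valid on the whole range $t^2<4n$. The one substantive difference is in the first part: your bound $|\cos\theta_{t,n}|<\frac{1}{2}$ uses the hypothesis $t^2<n$ in an essential way, so it proves the lemma only as literally stated, whereas the paper's $\sin$-based argument yields $\frac{1}{2\sqrt{n}}\leq\theta_{t,n}\leq\pi-\frac{1}{2\sqrt{n}}$ for every integer $t$ with $t^2<4n$ --- and that larger range is where the lemma is actually invoked in the estimate \eqref{OD2} (the hypothesis $t^2<n$ in the statement appears to be a typo for $t^2<4n$). So if you want your argument to support the application downstream, replace the cosine estimate in the first part by the paper's one-line sine estimate; everything else in your write-up can stand as is.
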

\begin{proof}
We have
\[
\sin \theta_{t,n} = \frac{\sqrt{4n-t^2}}{2\sqrt{n}} \geq \frac{1}{2\sqrt{n}}.
\]
Also,
\[
e^{i\left(\theta_{t,n} - \theta_{t+1,n}\right)} = \frac{1}{4n} \left(t+i\sqrt{4n-t^2}\right)\left(t+1-i\sqrt{4n-\left(t+1\right)^2}\right),
\]
so
\begin{align*}
\sin \left(\theta_{t,n} - \theta_{t+1,n}\right) &= \frac{1}{4n} \left(\left(t+1\right)\sqrt{4n-t^2}-t\sqrt{4n-\left(t+1\right)^2}\right)\\
&=\frac{1}{4n} \frac{\left(t+1\right)^2\left(4n-t^2\right)-t^2\left(4n-\left(t+1\right)^2\right)}{\left(t+1\right)\sqrt{4n-t^2}+t\sqrt{4n-\left(t+1\right)^2}}\\
&= \frac{2t+1}{\left(t+1\right)\sqrt{4n-t^2}+t\sqrt{4n-\left(t+1\right)^2}}\\
&\geq \frac{1}{\sqrt{4n}}. \qedhere
\end{align*}
\end{proof}
We define $D_N\left(t,n\right)$ by
\[
D_N\left(t,n\right) = \frac{i}{2\sqrt{4n-t^2}} \sum_{f} h_w \left(\frac{t^2-4n}{f^2}\right) \tilde{\mu}\left(t,f,n,N\right),
\]
where the inner sum runs over all positive divisors $f$ of $t^2-4n$ such that $\left(t^2-4n\right)/f^2 \in \mathbb{Z}$ is congruent to $0$ or $1 \pmod{4}$. Then we may write $B_2\left(n,k,N\right)$ as
\[
B_2\left(n,k,N\right) = \sum_{t\in \mathbb{Z}, ~ t^2<4n} \left(e^{i\left(k-1\right)\theta_{t,n}}-e^{-i\left(k-1\right)\theta_{t,n}}\right)D_N\left(t,n\right).
\]
Then expanding \eqref{sufficient} and using $D_N\left(t,n\right)=-D_N\left(-t,n\right)$, we get
\begin{align*}
&\sum_{k\in 2\mathbb{Z}} \phi\left(\frac{k-1}{T}\right) \left|B_2\left(n,k,N\right)\right|^2 \\
=& 4\sum_{k\in 2\mathbb{Z}} \phi\left(\frac{k-1}{T}\right) \sum_{t^2<4n}|D_N\left(t,n\right)|^2  \\
+& \sum_{t_1\neq t_2}\sum_{k\in 2\mathbb{Z}} \phi\left(\frac{k-1}{T}\right) e^{\pm i\left(k-1\right)\left(\theta_{t_1,n}- \theta_{t_2,n}\right)} D_N\left(t_1,n\right) D_N\left(t_2,n\right)\\
-&\sum_{t_1\neq -t_2}\sum_{k\in 2\mathbb{Z}} \phi\left(\frac{k-1}{T}\right) e^{\pm i\left(k-1\right)\left(\theta_{t_1,n}+ \theta_{t_2,n}\right)} D_N\left(t_1,n\right) D_N\left(t_2,n\right)\\
=&D+OD, \numberthis \label{OD1}
\end{align*}
where the diagonal part $D$ comes from $\theta_{t_1,n}+\theta_{t_2,n} = \pi$ and from $\theta_{t_1,n}=\theta_{t_2,n}$, and the off diagonal part $OD$ amounts to remaining terms. Note from Lemma \ref{theta} that, unless it is an integer multiple of $\pi$, $\theta_{t_1,n}\pm \theta_{t_2,n}$ are contained in $\left\lbrack \frac{1}{2\sqrt{n}},\pi -\frac{1}{2\sqrt{n}}\right\rbrack$ modulo $\pi$. Therefore we have
\begin{align*}
OD&\ll \sup_{\theta \in \left\lbrack \frac{1}{2\sqrt{n}},\pi -\frac{1}{2\sqrt{n}}\right\rbrack}\left|\sum_{k\in 2\mathbb{Z}} \phi\left(\frac{k-1}{T}\right) e^{i\left(k-1\right)\theta} \right| \sum_{t_1,t_2} |D_N\left(t_1,n\right) D_N\left(t_2,n\right)|\\
&\ll_N  \sup_{\theta \in \left\lbrack \frac{1}{2\sqrt{n}},\pi -\frac{1}{2\sqrt{n}}\right\rbrack}\left|\sum_{k\in 2\mathbb{Z}} \phi\left(\frac{k-1}{T}\right) e^{ i\left(k-1\right)\theta}\right| \sigma_1\left(n\right)^2. \numberthis \label{OD2}
\end{align*}

\begin{lemma}\label{idphi}
Let $T\geq \sqrt{n}$. Then for any $\theta$ that satisfies
$
\theta \in \left\lbrack \frac{1}{2\sqrt{n}},\pi -\frac{1}{2\sqrt{n}}\right\rbrack,
$
we have
\[
\sum_{k\in 2\mathbb{Z}} \phi\left(\frac{k-1}{T}\right) e^{i\left(k-1\right)\theta}= 0,
\]
and as a result
\[
\sum_{k\in 2\mathbb{Z}} \phi\left(\frac{k-1}{T}\right) \left|B_2\left(n,k,N\right)\right|^2
= 4\sum_{k\in 2\mathbb{Z}} \phi\left(\frac{k-1}{T}\right) \sum_{t^2<4n}|D_N\left(t,n\right)|^2.
\]
\end{lemma}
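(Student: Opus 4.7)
The plan is to establish the exponential sum identity via Poisson summation together with the support of $\hat{\phi}$, and then feed this into the diagonal/off-diagonal decomposition of $|B_2|^2$ recorded just above the lemma.

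For the first identity, the plan is to set $k = 2j$ and apply Poisson summation in $j \in \mathbb{Z}$ to $h(x) = \phi\!\left(\tfrac{2x-1}{T}\right) e^{i(2x-1)\theta}$. Substituting $u = (2x-1)/T$ and using that $\phi$ is even (so $\hat{\phi}$ is even), one computes
\[
\hat{h}(m) = \frac{T}{2}\, e^{-\pi i m}\, \hat{\phi}\!\left(\frac{T(\theta - m\pi)}{2\pi}\right).
\]
Since $\operatorname{supp}\hat{\phi} \subseteq [-\tfrac{1}{100}, \tfrac{1}{100}]$, the Fourier coefficient $\hat{h}(m)$ vanishes unless $|\theta - m\pi| \leq \pi/(50T)$. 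Under $T \geq \sqrt{n}$ and $\theta \in [1/(2\sqrt{n}), \pi - 1/(2\sqrt{n})]$, the distance from $\theta$ to the nearest integer multiple of $\pi$ is at least $1/(2\sqrt{n})$, which strictly exceeds $\pi/(50\sqrt{n}) \geq \pi/(50T)$ since $25 > \pi$. Hence every $\hat{h}(m)$ vanishes, and the Poisson dual of the original sum is identically zero.

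For the second identity, the plan is to return to the expansion $\sum_k \phi((k-1)/T)|B_2(n,k,N)|^2 = D + OD$ displayed just before the lemma. The diagonal term $D$ already equals $4 \sum_k \phi((k-1)/T)\sum_{t^2<4n}|D_N(t,n)|^2$ by construction. Each off-diagonal summand is the product of two $D_N$ factors and a sum of the form $\sum_k \phi((k-1)/T) e^{i(k-1)\vartheta}$ with $\vartheta = \pm\theta_{t_1,n} \pm \theta_{t_2,n}$, where the index pairs $(t_1,t_2)$ are precisely those for which $\vartheta \notin \pi\mathbb{Z}$. By Lemma~\ref{theta}, the reduction of such $\vartheta$ modulo $\pi$ lies in $[1/(2\sqrt{n}), \pi - 1/(2\sqrt{n})]$, so the first part of the proof applies and each off-diagonal exponential sum vanishes identically; only $D$ survives.

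No genuine obstacle appears: the argument reduces to the two quantitative facts $1/(2\sqrt{n}) > \pi/(50\sqrt{n}) \geq \pi/(50T)$ and the compact support of $\hat{\phi}$, combined with the structural spacing of the angles $\theta_{t,n}$ supplied by Lemma~\ref{theta}. The mild bookkeeping point to watch is that the off-diagonal exponents come in the four sign combinations $\pm \theta_{t_1,n}\pm\theta_{t_2,n}$ and must be reduced modulo $\pi$ before invoking the interval hypothesis, but this is exactly what the parenthetical remark preceding the lemma records.
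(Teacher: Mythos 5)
Your proof is correct and follows essentially the same route as the paper: Poisson summation in $k$ moves the sum to the dual side, where every term is killed because the support condition $|\theta-m\pi|\le \pi/(50T)$ forced by $\operatorname{supp}\hat\phi\subseteq[-\tfrac{1}{100},\tfrac{1}{100}]$ is incompatible with $|\theta-m\pi|\ge \tfrac{1}{2\sqrt n}\ge \tfrac{1}{2T}$, and the second identity then follows by applying this vanishing to the off-diagonal part of the decomposition \eqref{OD1}--\eqref{OD2} recorded before the lemma. The only cosmetic difference is that you phrase the numerical check as $\tfrac{1}{2\sqrt n}>\tfrac{\pi}{50T}$ while the paper writes $\bigl|\tfrac{T(\pi m-\theta)}{2\pi}\bigr|\ge\tfrac{1}{4\pi}>\tfrac{1}{100}$; these are the same estimate.
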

\begin{proof}
From the Poisson summation formula we have
\begin{equation}\label{lemeq1}
\sum_{k\in 2\mathbb{Z}} \phi\left(\frac{k-1}{T}\right) e^{i\left(k-1\right)\theta} =\sum_{n\in\mathbb{Z}} \phi\left(\frac{2n-1}{T}\right) e^{i\left(2n-1\right)\theta}  = \sum_{m\in \mathbb{Z}} \Phi \left(m\right),
\end{equation}
where
\[
\Phi\left( y \right) =\frac{T}{2}e^{-\pi i y}\hat{\phi}\left(\frac{T\left(\pi y-\theta\right)}{2\pi}\right).
\]
In the last expression, for any $m\in \mathbb{Z}$, we have
\[
\left|\frac{T\left(\pi m-\theta\right)}{2\pi}\right| \geq \frac{1}{4\pi},
\]
and since $\hat{\phi}$ is assumed to be supported in $\left\lbrack -\frac{1}{100}, \frac{1}{100}\right\rbrack$, the right-hand side of \eqref{lemeq1} vanishes.
\end{proof}
We are ready to prove the following.
\begin{lemma}\label{sufficient11} Let $N> 1$ be a fixed square-free integer.
Let $\phi$ be a positive even rapidly decaying function whose Fourier transform $\hat{\phi}$ is supported in $\left\lbrack-\frac{1}{100},\frac{1}{100}\right\rbrack$. Let $T\geq \sqrt{n}$. Then we have
\begin{multline}\label{sufficient1}
\sum_{k>0, k\in 2\mathbb{Z}} \phi\left(\frac{k-1}{T}\right) \left|\mathrm{Tr}~ \mathcal{T}_n^*- \frac{k-1}{12} \varphi \left(N\right)\frac{\delta\left(n,\square\right)}{\sqrt{n}}\right|^2\\
= 2\sum_{k\in 2\mathbb{Z}} \phi\left(\frac{k-1}{T}\right) \sum_{t^2<4n}|D_N\left(t,n\right)|^2 - \phi\left(\frac{1}{T}\right)\frac{\sigma_1\left(n\right)^2}{n}   + O_{\epsilon}\left(n^{\frac{1}{2}+\epsilon}\right).
\end{multline}
\end{lemma}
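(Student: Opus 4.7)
My plan is to invoke the Eichler--Selberg decomposition of Lemma \ref{selbergnew}. Since $N > 1$ is squarefree, $B_3(n,k,N) \equiv 0$, and $B_4(n,k,N)$ is supported only at $k=2$ with value $\mu(N)\sigma_1(n)/\sqrt n$. Therefore, for each $k \in 2\mathbb{Z}_{>0}$,
\[
\mathrm{Tr}\,\mathcal{T}_n^* - \frac{k-1}{12}\varphi(N)\frac{\delta(n,\square)}{\sqrt n} \;=\; B_2(n,k,N) + B_4(n,k,N),
\]
and opening the square (using that $B_2$ and $B_4$ are real) decomposes the left-hand side as
\[
\sum_{k>0,\,k\in 2\mathbb{Z}}\phi\!\left(\tfrac{k-1}{T}\right) B_2(n,k,N)^2 \;+\; \phi\!\left(\tfrac{1}{T}\right)\bigl[\,2B_2(n,2,N)B_4(n,2,N) + B_4(n,2,N)^2\,\bigr],
\]
since $B_4$ vanishes for $k \neq 2$.

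For the first sum I would use the involution $k \mapsto 2-k$ on $2\mathbb{Z}$, which is fixed-point free (its only fixed point $k=1$ is odd), preserves $\phi((k-1)/T)$ by evenness of $\phi$, and satisfies $B_2(n,2-k,N) = -B_2(n,k,N)$ directly from the formula. The involution bijects $\{k\in 2\mathbb{Z}:k>0\}$ with $\{k\in 2\mathbb{Z}:k\le 0\}$ while preserving the summand, yielding
\[
\sum_{k>0,\,k\in 2\mathbb{Z}}\phi\!\left(\tfrac{k-1}{T}\right) B_2^2 \;=\; \tfrac12\sum_{k\in 2\mathbb{Z}}\phi\!\left(\tfrac{k-1}{T}\right) B_2^2 \;=\; 2\sum_{k\in 2\mathbb{Z}}\phi\!\left(\tfrac{k-1}{T}\right)\sum_{t^2<4n}|D_N(t,n)|^2,
\]
where the second equality is exactly Lemma \ref{idphi}.

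For the boundary $k=2$ contribution I would rewrite $2B_2B_4 + B_4^2 = B_4(2B_2 + B_4)$ and extract the leading constant via the Eichler--Selberg identity at $k=2$, namely $\mathrm{Tr}\,\mathcal{T}_n^*(N,2) = B_1(n,2,N) + B_2(n,2,N) + B_4(n,2,N)$. Deligne's pointwise bound \eqref{deligne} combined with $\dim S_2(N)^* = O_N(1)$ gives $|\mathrm{Tr}\,\mathcal{T}_n^*(N,2)| \ll_N \sigma(n) \ll n^\epsilon$, and $|B_1(n,2,N)| \ll_N n^{-1/2}$, so
\[
B_2(n,2,N) + B_4(n,2,N) \;=\; O_{N,\epsilon}(n^\epsilon).
\]
Consequently $2B_2 + B_4 = -B_4 + O_\epsilon(n^\epsilon)$, and multiplying by $B_4(n,2,N) = \mu(N)\sigma_1(n)/\sqrt n$ (using $\mu(N)^2 = 1$ and $|B_4(n,2,N)| \ll \sigma_1(n)/\sqrt n \ll n^{1/2+\epsilon}$) yields
\[
2B_2(n,2,N)B_4(n,2,N) + B_4(n,2,N)^2 \;=\; -\frac{\sigma_1(n)^2}{n} + O_\epsilon\!\left(n^{1/2+\epsilon}\right).
\]
Assembling the two pieces produces the claimed identity.

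The main obstacle is the precise extraction of the $-\sigma_1(n)^2/n$ correction at $k=2$. A direct bound $|B_2(n,2,N)| \ll_N \sigma_1(n)$ from Lemma \ref{estimates}, combined with $|B_4| \ll \sigma_1(n)/\sqrt n$, would produce a cross-term of size $\sigma_1(n)^2/\sqrt n \gg n^{3/2+\epsilon}$, which is far larger than the target error $O(n^{1/2+\epsilon})$ and would make the identity meaningless. It is essential to apply the Eichler--Selberg trace identity at $k=2$ together with Deligne's bound on Hecke eigenvalues to exploit the near-cancellation $B_2(n,2,N) \approx -B_4(n,2,N)$ at weight two.
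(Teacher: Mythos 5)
Your proposal is correct and follows essentially the same route as the paper: both reduce to the second moment of $B_2$ via the antisymmetry $B_2(n,2-k,N)=-B_2(n,k,N)$ and Lemma \ref{idphi}, and both extract the $-\sigma_1(n)^2/n$ correction at $k=2$ from the near-cancellation $B_2(n,2,N)+B_4(n,2,N)=\mathrm{Tr}\,\mathcal{T}_n^*-B_1=O_\epsilon(n^\epsilon)$ furnished by Deligne's bound in weight $2$. Your closing remark about why the naive bound $|B_2(n,2,N)|\ll\sigma_1(n)$ would be fatal is exactly the point the paper's computation is designed to avoid.
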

\begin{proof}
By Lemma~\ref{selbergnew}, for $N> 1$ we have
\[
B_3\left(n,k,N\right)=0.
\]
The summand of the left-hand side of \eqref{sufficient1} agrees with $B_2\left(n,k,N\right)$ unless $k=2$, so from \eqref{sufficient}, \eqref{OD1}, \eqref{OD2}, and Lemma \ref{idphi}, we have
\begin{multline*}
\sum_{k>0, k\in 2\mathbb{Z}} \phi\left(\frac{k-1}{T}\right) \left|\mathrm{Tr}~ \mathcal{T}_n^*- \frac{k-1}{12} \varphi \left(N\right)\frac{\delta\left(n,\square\right)}{\sqrt{n}}\right|^2\\
= 2\sum_{k\in 2\mathbb{Z}} \phi\left(\frac{k-1}{T}\right) \sum_{t^2<4n}|D_N\left(t,n\right)|^2\\
+  \phi\left(\frac{1}{T}\right)\left( \left|\mathrm{Tr}~ \mathcal{T}_n^*- \frac{1}{12} \varphi \left(N\right)\frac{\delta\left(n,\square\right)}{\sqrt{n}}\right|^2 - |B_2\left(n,2,N\right)|^2\right) + O_{\epsilon}\left(n^{\frac{1}{2}+\epsilon}\right).
\end{multline*}
By Lemma~\ref{selbergnew}, for $k=2$ and $N>1 $ we have
\[
B_2\left(n,2,N\right)=\mathrm{Tr}~ \mathcal{T}_n^*- \frac{1}{12} \varphi \left(N\right)\frac{\delta\left(n,\square\right)}{\sqrt{n}} - \mu\left(N\right)\frac{\sigma_1\left(n\right)}{\sqrt{n}}.
\]
By  the Ramanujan bound for weight $2$ modular forms, we have
\[
\mathrm{Tr}~ \mathcal{T}_n^* \ll_{\epsilon,N} n^\epsilon.
\]
Hence,
\[
 \left|\mathrm{Tr}~ \mathcal{T}_n^*- \frac{1}{12} \varphi \left(N\right)\frac{\delta\left(n,\square\right)}{\sqrt{n}}\right|^2 - |B_2\left(n,2,N\right)|^2=-\frac{\sigma_1\left(n\right)^2}{n}   + O_{\epsilon}\left(n^{\frac{1}{2}+\epsilon}\right).\qedhere
\]
\end{proof}

\subsection{Arithmetic sum}
In this section, we estimate the arithmetic part of \eqref{sufficient1}:
\[
\sum_{t^2<4n}|D_N\left(t,n\right)|^2.
\]
\begin{theorem}\label{arith}
Assume that $n$ is odd. Then we have
\[
\sqrt{n}\ll_N \sum_{t^2<4n}|D_N\left(t,n\right)|^2\ll_N \sqrt{n} \left(\log n\right)^2 \left(\log \log n\right)^4.
\]
\end{theorem}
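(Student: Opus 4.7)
\textbf{Proof plan for Theorem \ref{arith}.} The plan is to establish the upper and lower bounds separately, the upper bound via direct class number estimates and the lower bound via a Cauchy--Schwarz lower bound combined with the Kloosterman circle method applied to a ternary representation problem.

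For the upper bound, I would apply the trivial estimate $|\tilde\mu(t,f,n,N)| \ll_N 1$ (as used in Lemma \ref{estimates}) to get
\[
|D_N(t,n)| \ll_N \frac{H(4n-t^2)}{\sqrt{4n-t^2}},
\]
where $H(d) = \sum_{f^2 \mid d} h_w(-d/f^2)$ is the Hurwitz class number with the usual summation convention on $f$. Using the unconditional bound $h_w(-d) \ll \sqrt{d}\log d \,(\log\log d)^2$ (which follows from Dirichlet's class number formula together with $L(1,\chi_d)\ll \log d (\log\log d)^2$ due to Littlewood) and summing over $f^2 \mid 4n-t^2$ via the squareful divisor structure, one obtains $H(4n-t^2) \ll \sqrt{4n-t^2}\,(\log n)(\log\log n)^2$, and hence $|D_N(t,n)|^2 \ll_N (\log n)^2(\log\log n)^4$. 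Summing over the $O(\sqrt{n})$ values of $t$ with $t^2<4n$ yields the upper bound claimed in the theorem.

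For the lower bound, the plan is to convert the sum over class numbers into a lattice-point counting problem. Using the classical Gauss--Hurwitz identity
\[
\sum_{\substack{f^2\mid d\\ -d/f^2\equiv 0,1\,(4)}} h_w(-d/f^2) \;=\; \tfrac{1}{12}\, r_3(d) \quad (\text{up to explicit small-$d$ corrections}),
\]
the inner $f$-sum defining $D_N(t,n)$ becomes, after unfolding $\tilde\mu(t,f,n,N)$, a weighted count $r_{3,N}(4n-t^2)$ of representations of $4n-t^2$ as $x^2+y^2+z^2$ subject to congruence conditions at primes dividing $N$ (coming from the factor $M(t,n,NN_f)$ inside $\mu(t,f,n,d)$). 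The upshot is
\[
|D_N(t,n)| \;\gg_N\; \frac{r_{3,N}(4n-t^2)}{\sqrt{4n-t^2}},
\]
so $\sum_{t^2<4n}|D_N(t,n)|^2$ is bounded below by a weighted count of integral points on the 3-sphere $t^2+x^2+y^2+z^2 = 4n$ under congruence conditions modulo $N$.

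To extract the lower bound $\gg_N \sqrt n$, I would apply the Kloosterman circle method to the quaternary counting problem $t^2+x^2+y^2+z^2 = 4n$ with the prescribed congruence conditions. Since $n$ is odd and $\gcd(n,N)=1$, all local densities (in particular at $2$ and at primes dividing $N$) are positive and bounded below uniformly, so the singular series contributes a main term of order $n$; the error from the Kloosterman fractions is a power saving via Weil's bound. This gives
\[
\sum_{t^2<4n} r_{3,N}(4n-t^2) \;\gg_N\; n.
\]
Restricting to $|t|<\sqrt n$ so $4n-t^2 \asymp n$ and applying Cauchy--Schwarz with the $O(\sqrt n)$ admissible values of $t$,
\[
\sum_{t^2<4n}|D_N(t,n)|^2 \;\gg_N\; \frac{1}{n}\sum_{|t|<\sqrt n} r_{3,N}(4n-t^2)^2 \;\gg_N\; \frac{1}{n}\cdot\frac{n^2}{\sqrt n} \;=\; \sqrt n,
\]
as required.

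The main obstacle is the lower bound, and within it two technical points. First, the weights $\tilde\mu(t,f,n,N) = \sum_{d \mid N}\sigma(N/d)\mu(N/d)\mu(t,f,n,d)$ involve signed M\"obius contributions, so showing they combine into a nonnegative local weight (equivalently, that the singular series at each prime $p\mid N$ is strictly positive) requires a careful local analysis of $M(t,n,K)$ modulo $N$. Second, executing Kloosterman's method on this inhomogeneous ternary problem while tracking the congruence conditions, and verifying the power-saving error term so that the singular-series main term dominates, is the heart of the argument; I would follow the format of \cite{Kloosterman} with the archimedean smoothing supported in a bulk region $|t|<\sqrt n$ chosen to avoid the boundary where $4n-t^2$ is atypically small.
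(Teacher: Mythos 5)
Your architecture coincides with the paper's on both sides. For the upper bound: the trivial bound $|\tilde\mu(t,f,n,N)|\ll_N 1$, a class-number estimate giving $H(4n-t^2)\ll\sqrt{n}\,\log n\,(\log\log n)^2$, and summation over the $O(\sqrt n)$ values of $t$ (the paper extracts the $(\log\log n)^2$ from the divisor sum over $f$ via Gr\"onwall rather than from the $L(1,\chi)$ bound, and your attribution of $L(1,\chi_d)\ll\log d(\log\log d)^2$ to Littlewood is garbled, but the bound you use is weaker than the unconditional $L(1,\chi_d)\ll\log d$, so this is harmless). For the lower bound: Cauchy--Schwarz, Gauss's identity in the form $r_3(m)\le 48\,H(-m)$, and Kloosterman's circle method for $t^2+x^2+y^2+z^2=4n$ with a congruence condition on $t$, which is exactly how the paper invokes the second author's Theorem 1.6 to get $\gg_N n$ lattice points and hence $\gg_N\sqrt n$ after Cauchy--Schwarz.

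The one step that would fail as stated is the pointwise bound $|D_N(t,n)|\gg_N r_{3,N}(4n-t^2)/\sqrt{4n-t^2}$ for \emph{all} $t$, together with your proposed fix of showing that the signed M\"obius contributions ``combine into a nonnegative local weight.'' They do not: $\tilde\mu(t,f,n,N)$ depends on $t$ through $M(t,n,\cdot)$ and genuinely vanishes for a positive proportion of $t$. For instance, for $N=p$ an odd prime and $f$ coprime to $p$ one computes $\tilde\mu(t,f,n,p)=M(t,n,p)-2\in\{-2,-1,0\}$, which equals $0$ exactly when $t^2-4n$ is a nonzero square mod $p$; so there is no uniform lower bound on $|D_N(t,n)|$ in terms of an unrestricted representation count, and ``the singular series is positive at every $p\mid N$'' is not the right assertion. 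The repair --- and what the paper does --- is to use the nonnegativity of the summands $|D_N(t,n)|^2$ to discard the bad $t$ entirely: fix an odd $n_0$ with $\bigl(\frac{n_0^2-4n}{p}\bigr)=-1$ for all odd $p\mid N$, so that $\gcd(N,f)=1$ for every admissible $f$ and $\tilde\mu(t,f,n,N)\equiv\sigma(N)\mu(N)$ for all $t\equiv n_0\pmod{2N}$ (oddness of $n$ and $t$ disposes of $p=2$). The circle method is then applied only to $\{(x,y,z,t):t^2+x^2+y^2+z^2=4n,\ t\equiv n_0\pmod{2N}\}$, where positivity of the local densities is immediate; with this restriction your Cauchy--Schwarz step goes through verbatim.
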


Recall that
\[
D_N\left(t,n\right) = \frac{i}{2\sqrt{4n-t^2}} \sum_{f} h_w \left(\frac{t^2-4n}{f^2}\right) \tilde{\mu}\left(t,f,n,N\right),
\]
where the inner sum runs over all positive divisors $f$ of $t^2-4n$ such that $\left(t^2-4n\right)/f^2 \in \mathbb{Z}$ is congruent to $0$ or $1 \pmod{4}$. $\tilde{\mu}\left(t,f,n,N\right)$ is given by
\[
\tilde{\mu}\left(t,f,n,N\right) = \sum_{d|N} \sigma\left(N/d\right)\mu\left(N/d\right)\mu\left(t,f,n,d\right),
\]
and $\mu\left(t,f,n,N\right)$ is given by
\[
\mu\left(t,f,n,N\right) = \frac{\nu\left(N\right)}{\nu\left(N/N_f\right)} M\left(t,n,NN_f\right),
\]
where $N_f = \gcd\left(N,f\right)$ and $M\left(t,n,K\right)$ denotes the number of solutions of the congruence $x^2 -tx+n\equiv 0 \pmod{K}$.

Denote by $H\left(n\right) = \sum_{f^2|n} h_w\left(-n/f^2\right)$ the Hurwitz class number. For the upper bound for the arithmetic sum, we write
\begin{equation}\label{upper}
\sum_{t^2<4n}D_N\left(t,n\right)^2 \ll_N \sum_{t^2<4n}\frac{1}{4n-t^2} H^2 \left(t^2-4n\right),
\end{equation}
using the estimate $\mu\left(t,f,n,N\right) \ll_N 1$.

For the lower bound, we first prove the following.
\begin{lemma}
Assume that $n$ is odd. Fix an odd integer $0<n_0 <2N$ such that $\left(\frac{n_0^2-4n}{p}\right)=-1$ for all odd primes $p|N$. Then $\tilde{\mu}\left(t,f,n,N\right)=\sigma\left(N\right)\mu\left(N\right)$ for any $t \equiv n_0\pmod{2N}$.
\end{lemma}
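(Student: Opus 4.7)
The plan is to reduce the alternating sum defining $\tilde\mu(t,f,n,N)$ to a single term by proving that $\mu(t,f,n,d) = 0$ for every divisor $d$ of $N$ with $d>1$. Together with the trivial evaluation $\mu(t,f,n,1) = M(t,n,1) = 1$, this leaves only the $d=1$ summand, which carries coefficient $\sigma(N)\mu(N)$, yielding the claimed identity $\tilde\mu(t,f,n,N) = \sigma(N)\mu(N)$.

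First, I would exploit the squarefreeness of $N$ together with the Chinese Remainder Theorem. Writing $d_f := \gcd(d,f)$, for $d \mid N$ and any $f$, the $p$-part of $d \cdot d_f$ equals $p$ if $p \nmid f$ and $p^2$ if $p \mid f$, so
\[
M(t,n,d\cdot d_f) = \prod_{p\mid d} M\left(t,n,p^{a_p}\right), \qquad a_p \in \{1,2\}.
\]
Moreover, any solution of $x^2 - tx + n \equiv 0 \pmod{p^2}$ reduces to a solution modulo $p$, so $M(t,n,p^2) = 0$ whenever $M(t,n,p) = 0$. It therefore suffices to prove $M(t,n,p) = 0$ for every prime $p \mid N$.

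For each odd prime $p \mid N$, completing the square (permissible because $2$ is invertible modulo $p$) yields $M(t,n,p) = 1 + \left(\frac{t^2-4n}{p}\right)$. Since $t \equiv n_0 \pmod{2N}$, we have $t^2 - 4n \equiv n_0^2 - 4n \pmod p$, and the hypothesis $\left(\frac{n_0^2-4n}{p}\right) = -1$ forces $M(t,n,p) = 0$. For $p = 2$ (relevant only when $2 \mid N$), the oddness of $n_0$ makes $t$ odd, and combined with $n$ odd this reduces the congruence to $x^2 + x + 1 \equiv 0 \pmod 2$; since $x(x+1)$ is always even, this has no solutions, so $M(t,n,2) = 0$.

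I anticipate no substantive obstacle: the argument is essentially bookkeeping once one observes that the Legendre symbol hypothesis simultaneously switches off every odd prime dividing $N$, while the joint oddness of $n$ and $n_0$ (together with the factor $2N$ in the modulus for $t$) handles the prime $2$. The only delicate point is the passage from $M(t,n,p) = 0$ to $M(t,n,p^2) = 0$, which is immediate via reduction modulo $p$.
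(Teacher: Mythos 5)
Your proof is correct and follows essentially the same route as the paper: both arguments kill every summand with $d>1$ by showing $M(t,n,p)=0$ for odd $p\mid N$ via the Legendre symbol hypothesis and $M(t,n,2)=0$ from the joint oddness of $t$ and $n$, leaving only the $d=1$ term with coefficient $\sigma(N)\mu(N)$. The only (immaterial) difference is bookkeeping: the paper first reduces to $d\in\{1,2\}$ and then absorbs the $d=2$ term through the identity $\sigma(N)\mu(N)=-2\sigma(N/2)\mu(N/2)$, whereas you eliminate all $d>1$ uniformly, taking care of the possible $p^2$ in the modulus $d\cdot\gcd(d,f)$ by reduction modulo $p$.
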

\begin{proof}
For such $t$, we have $\mu\left(t,f,n,d\right)=0$ unless $d=1$ or $2$. So for an odd $N$,
\[
\tilde{\mu}\left(t,f,n,N\right) = \sigma\left(N\right)\mu\left(N\right).
\]
When $N$ is even, we have
\begin{multline*}
\tilde{\mu}\left(t,f,n,N\right) = \sigma\left(N\right)\mu\left(N\right) + \sigma\left(N/2\right)\mu\left(N/2\right)\mu\left(t,f,n,2\right) \\
= \sigma\left(N/2\right)\mu\left(N/2\right)\left(\mu\left(t,f,n,2\right) - 2\right),
\end{multline*}
where
\[
\mu\left(t,f,n,2\right) =  M\left(t,n,2\right),
\]
because $\gcd\left(N,f\right)|\gcd\left(N,t^2-4n\right) =1$. Then $M\left(t,n,2\right) = 0$ since both $n$ and $t$ are assumed to be odd, and therefore
\[
\tilde{\mu}\left(t,f,n,N\right)=\sigma\left(N/2\right)\mu\left(N/2\right)\times \left(- 2\right) = \sigma\left(N\right)\mu\left(N\right).\qedhere
\]
\end{proof}
Using this lemma, we bound the arithmetic sum from the below under the assumption that $n$ is odd as follows:
\begin{multline}\label{lower}
\sum_{t^2<4n}D_N\left(t,n\right)^2 \geq \sum_{\substack{t^2<4n\\~t\equiv n_0 \pmod{2N}}} D_N\left(t,n\right)^2 =\sum_{\substack{t^2<4n\\~t\equiv n_0 \pmod{2N}}}\frac{\sigma\left(N\right)^2}{4n-t^2} H^2 \left(t^2-4n\right)\\
 \geq \sum_{\substack{t^2<4n\\~t\equiv n_0 \pmod{2N}}}\frac{1}{4n-t^2} H^2 \left(t^2-4n\right).
\end{multline}
We now handle the right-hand sides of \eqref{upper} and \eqref{lower} separately.
\subsubsection{Upper bound}
We first recall from \cite[p.273, c)]{Cohen} that for $n=Df^2<0$,
\begin{equation}\label{hur}
H\left(n\right) = \frac{h\left(D\right)}{w\left(D\right)} \sum_{d|f} \mu \left(d\right) \chi_D\left(d\right) \sigma_1 \left(\frac{f}{d}\right),
\end{equation}
where $2w\left(D\right)$ is the number of units in $\mathbb{Q}\left(\sqrt{-D}\right)$. Note that
\[
\sum_{d|f} \mu \left(d\right) \chi_D\left(d\right) \sigma_1 \left(\frac{f}{d}\right)
\]
is multiplicative in $f$, and
\begin{multline*}
\sum_{d|p^k} \mu \left(d\right) \chi_D\left(d\right) \sigma_1 \left(\frac{p^k}{d}\right) = \sigma_1 \left(p^k\right) - \chi_D\left(p\right)\sigma_1 \left(p^{k-1}\right) \\
\leq \sigma_1 \left(p^k\right)+\sigma_1 \left(p^{k-1}\right)< \left(1+\frac{1}{p}\right) \sigma_1 \left(p^k\right).
\end{multline*}
Therefore
\[
\sum_{d|f} \mu \left(d\right) \chi_D\left(d\right) \sigma_1 \left(\frac{f}{d}\right) < \sigma_1 \left(f\right) \prod_{p|f}\left(1+\frac{1}{p}\right) \ll f  \left(\log \log f\right)^2,
\]
where we used Gr\"onwall's theorem in the last inequality. Using a standard upper bound $h\left(D\right)\ll \sqrt{D} \log D$ yields
\[
H\left(n\right) \ll \sqrt{D}f \log D  \left(\log\log f\right)^2  \ll \sqrt n \log n \left(\log \log n\right)^2.
\]
Now we apply this to \eqref{upper} to conclude that
\[
\sum_{t^2<4n}D_N\left(t,n\right)^2 \ll_N \sqrt n \left(\log n\right)^2\left(\log \log n\right)^4.
\]
\subsubsection{Lower bound}
From the Cauchy--Schwarz inequality,
\begin{multline*}
\sum_{\substack{t^2<4n\\ t\equiv n_0 \pmod{2N}}}\frac{1}{4n-t^2} H^2 \left(t^2-4n\right) \sum_{\substack{t^2<4n\\ t\equiv n_0 \pmod{2N}}}\left(4n-t^2\right) \\
\geq \left(\sum_{\substack{t^2<4n\\ t\equiv n_0 \pmod{2N}}} H \left(t^2-4n\right)\right)^2,
\end{multline*}
and so we have
\[
\sum_{\substack{t^2<4n\\t\equiv n_0 \pmod{2N}}}\frac{1}{4n-t^2} H^2 \left(t^2-4n\right) \gg  n^{-\frac{3}{2}} \left(\sum_{t^2<4n,~t\equiv n_0 \pmod{2N}} H \left(t^2-4n\right)\right)^2.
\]
Let $r_3\left(n\right)$ be the number of ways of representing $n$ as a sum of three squares. Then Gauss' formula (see for instance, \cite[Equation (1)]{KO}) asserts that
\[
r_3\left(n\right) =
\begin{cases}
12 H\left(-4n\right) & n\equiv 1,2 \pmod 4\\
 24 H\left(-n\right) & n\equiv 3 \pmod 8\\
r\left(\frac{n}{4}\right) & n\equiv 0 \pmod 4\\
0 & n\equiv 7 \pmod 8
\end{cases}.
\]
Observe from \eqref{hur} that if $4\nmid m$, then
\[
H\left(4^k m\right) = H\left(m\right) \left(\sigma_1\left(2^k\right) - \chi_D\left(2\right) \sigma_1\left(2^{k-1}\right)\right),
\]
and so
\[
2^kH\left(m\right) \leq H\left(4^k m\right) \leq  \left(2^{k+1}+2^k-2\right)H\left(m\right).
\]
Combining all these, we conclude that
\[
r_3\left(n\right) \leq 48 H\left(-n\right).
\]
Therefore we have
\[
48 \sum_{t^2<4n,~t\equiv n_0 \pmod{2N}} H \left(t^2-4n\right) \geq \sum_{t^2<4n,~t\equiv n_0 \pmod{2N}} r_3\left(4n-t^2\right),
\]
and observe that the last sum is equal to the number of elements in the following set:
\begin{equation}\label{class}
A_{2N}\left(n\right):=\{(x,y,z,t) \in \mathbb{Z}^4~:~4n=t^2+x^2+y^2+z^2,~ t\equiv n_0 \pmod{2N}\}.
\end{equation}
Note that we assume that $n$ is odd and $N$ is fixed. Kloosterman~\cite{Kloosterman} developed a version of the classical circle method with no minor arcs for quadratic forms in four variables. Based on the work of Kloosterman, we have~\cite[Theorem 1.6]{Sardari}
$$
A_{N}\left(n\right)\gg_N n.
$$
The work of the second author \cite[Theorem 1.6]{Sardari} gives the optimal exponent for  strong approximation for quadratic forms in five and more variables. For quadratic forms in four variables, it implies the above lower bound with an explicit dependence on $N$.

This completes the proof of the lower bound in Theorem \ref{arith}.

\subsection{Completion of proofs}
In this section, we prove Theorem \ref{selbergmain1}, \ref{selbergmain2}, and Corollary \ref{cor1}.
\begin{proof}[Proof of Theorem \ref{selbergmain1}]
This is a simple consequence of combining Lemma \ref{sufficient11} and Theorem \ref{arith}.
\end{proof}
\begin{proof}[Proof of Theorem \ref{selbergmain2}]
From Lemma \ref{sufficient11} and Theorem \ref{arith}, we see that the left-hand side of \eqref{selberg2} is
\[
> c_N\sqrt{n} - \frac{\sigma_1\left(n\right)^2}{An\sqrt{n}}
\]
for some constant $c_N>0$ depending only on $N$. If $n=p^m$, then $\sigma_1\left(n\right) = \frac{p^{m+1}-1}{p-1} < 2p^m = 2n$, which implies that
\[
c_N\sqrt{n} - \frac{\sigma_1\left(n\right)^2}{An\sqrt{n}} > \left(c_N- \frac{4}{A}\right)\sqrt{n}. \qedhere
\]
\end{proof}
\begin{proof}[Proof of Corollary \ref{cor1}]
We first note that from  \cite[(61)]{Gamburd} that for $n=p^m$,
\[
\left|\mathrm{Tr}~\mathcal{T}_n\left(k,N\right)^*- |B_{k,N}^*|\frac{\delta\left(n,\square\right)}{\sqrt{n}}\right|\leq 2m^2|B_{k,N}^*|D\left(\mu_{k,N}^*,\mu_p\right).
\]
By \eqref{dim} and Young's inequality $2x^2+2y^2 \geq \left(x+y\right)^2$,
\begin{multline*}
2\left|\mathrm{Tr}~\mathcal{T}_n\left(k,N\right)^*- |B_{k,N}^*|\frac{\delta\left(n,\square\right)}{\sqrt{n}}\right|^2   \\
\geq  \left|\mathrm{Tr}~\mathcal{T}_n\left(k,N\right)^*- \frac{k-1}{12} \varphi \left(N\right)\frac{\delta\left(n,\square\right)}{\sqrt{n}}\right|^2 + O\left(n^{-1}\right).
\end{multline*}
Now from Theorem \ref{selbergmain2}, we have
\begin{equation}\label{last}
\frac{1}{\sum_{k\in 2\mathbb{Z}} \phi\left(\frac{k-1}{K}\right) }\sum_{k>0, k\in 2\mathbb{Z}} \phi\left(\frac{k-1}{K}\right) m^4|B_{k,N}^*|^2D\left(\mu_{k,N}^*,\mu_p\right)^2 \gg_N n^{\frac{1}{2}},
\end{equation}
where $K = A\sqrt{n}$ for some fixed sufficiently large $A$. Suppose in order to obtain a contradiction that
\begin{equation}\label{llast}
D\left(\mu_{k,N}^*,\mu_p\right) = o\left(\frac{1}{k^{\frac{1}{2}}(\log k)^2}\right).
\end{equation}
Then from \eqref{last}, we have
\begin{align*}
n^{\frac{1}{2}}&\ll \frac{1}{\sum_{k\in 2\mathbb{Z}} \phi\left(\frac{k-1}{K}\right) }\sum_{k>0, k\in 2\mathbb{Z}} \phi\left(\frac{k-1}{K}\right) m^4|B_{k,N}^*|^2D\left(\mu_{k,N}^*,\mu_p\right)^2\\
&= o \left(\frac{1}{K}\sum_{k>0, k\in 2\mathbb{Z}} \phi\left(\frac{k-1}{K}\right) m^4\frac{k}{(\log k)^4}\right).
\end{align*}
However,
\[
\frac{1}{K}\sum_{k>0, k\in 2\mathbb{Z}} \phi\left(\frac{k-1}{K}\right) m^4\frac{k}{(\log k)^4} \ll m^4 \frac{K}{(\log K)^4} \ll \sqrt{n}
\]
contradicting the assumption \eqref{llast}.
\end{proof}

\section{Appendix: By Simon Marshall}

The purpose of this appendix is to illustrate the geometric origin of the transition behavior of the $J$-Bessel function, by recalling the derivation of the Petersson trace formula as a relative trace formula following \cite{KL}.  Let $G = PSL_2\left(\mathbb{R}\right)$, and $\Gamma = PSL_2\left(\mathbb{Z}\right)$.  Let $k \ge 2 $ be even, and define $f \in C^\infty\left(G\right)$ by
\[
f\left(g\right) = \frac{k-1}{4\pi} \frac{ \left(2i\right)^k}{ \left(-b+c + \left(a+d\right)i\right)^k}, \qquad g =  \begin{pmatrix} a& b \\ c& d \end{pmatrix}.
\]
This is the $L^2$-normalized matrix coefficient of the lowest weight vector in the weight $k$ discrete series \cite[Section 3.1]{KL}.  We form the function
\[
K_\Gamma\left(x,y\right) = \sum_{\gamma \in \Gamma} f\left(x^{-1} \gamma y\right)
\]
on $\left(\Gamma \backslash G\right)^2$.  The Petersson trace formula can be proved by integrating $K_\Gamma\left(x,y\right)$ against characters over two horocycles on $\Gamma \backslash G$, and comparing the geometric and spectral expansions of $K_\Gamma$.  More precisely, if $m, n \ge 1$ and we define
\[
\sigma_n = \begin{pmatrix} k / 4\pi n & \\ & 1 \end{pmatrix},
\]
and likewise for $\sigma_m$, then the integral we wish to expand is
\[
\int_0^1 \int_0^1 K_\Gamma\left( \begin{pmatrix} 1 & x \\ & 1 \end{pmatrix} \sigma_n, \begin{pmatrix} 1 & y \\ & 1 \end{pmatrix} \sigma_m \right) e\left(-nx + my\right) dx dy.
\]
Note that the heights we have chosen for our horocycles are optimal for picking up the $n$th and $m$th Fourier coefficients on the spectral side.

We shall analyze the geometric side of this integral, which is
\[
\int_0^1 \int_0^1 \sum_{\gamma \in \Gamma} f\left(\sigma_n^{-1} \begin{pmatrix} 1 & -x \\ & 1 \end{pmatrix} \gamma \begin{pmatrix} 1 & y \\ & 1 \end{pmatrix} \sigma_m \right) e\left(-nx + my\right) dx dy.
\]
We break the sum over $\gamma$ into double cosets $N \eta N$, which gives
\[
\sum_{\eta \in N \backslash \Gamma / N} \int_0^1 \int_0^1 \sum_{\gamma \in N \eta N} f\left(\sigma_n^{-1} \begin{pmatrix} 1 & -x \\ & 1 \end{pmatrix} \gamma \begin{pmatrix} 1 & y \\ & 1 \end{pmatrix} \sigma_m \right) e\left(-nx + my\right) dx dy.
\]
The contribution from the identity coset is
\[
\int_0^1 \int_0^1 \sum_{\gamma \in N} f\left(\sigma_n^{-1} \begin{pmatrix} 1 & -x \\ & 1 \end{pmatrix} \gamma \begin{pmatrix} 1 & y \\ & 1 \end{pmatrix} \sigma_m \right) e\left(-nx + my\right) dx dy.
\]
This vanishes unless $m = n$, in which case it is
\[
\frac{4\pi n}{k} \int_{-\infty}^\infty f\left( \begin{pmatrix} 1 & x \\ & 1 \end{pmatrix}  \right) dx,
\]
i.e., the integral of $f$ over the horocycle of height 1.  If $\eta \neq 1$, there is no repetition among the elements $n_1 \gamma n_2$, and so we may unfold the two integrals to obtain
\begin{equation}
\label{Ieta}
I_\eta = \int_{-\infty}^\infty \int_{-\infty}^\infty f\left(\sigma_n^{-1} \begin{pmatrix} 1 & -x \\ & 1 \end{pmatrix} \eta \begin{pmatrix} 1 & y \\ & 1 \end{pmatrix} \sigma_m \right) e\left(-nx + my\right) dx dy.
\end{equation}

This integral has a simple geometric meaning, as the integral of the kernel $K\left(x,y\right) = f\left(x^{-1}y\right)$ against characters over the two horocycles $N \sigma_n$ and $\eta N \sigma_m$.
If we write $\eta = \begin{pmatrix} a & b \\ c & d \end{pmatrix}$ with $c > 0$, then $c$ corresponds to the index of summation on the geometric side of the Petersson formula.  Moreover, the ranges $c < 4\pi \sqrt{mn}/k$, $c = 4\pi \sqrt{mn}/k$, and $c > 4\pi \sqrt{mn}/k$ correspond to the oscillation, transition, and decay range of the $J$-Bessel function in the following way.  We shall use the fact that the kernel $K$ concentrates near the diagonal in $\mathbb{H}^2 \times \mathbb{H}^2$.  If $c < 4\pi \sqrt{mn}/k$, then the two horocycles intersect transversally.  The integrand is roughly supported on two balls of radius $k^{-\frac{1}{2}}$ and has magnitude $k$, and we have $I_\eta \sim 1$ as expected.  If $c > 4\pi \sqrt{mn}/k$ then the horocycles do not intersect, and $I_\eta \ll_N k^{-N}$. The case $c = 4\pi \sqrt{mn}/k$ is where the horocycles are tangent, and so the integral is roughly supported on a ball of radius $k^{-\frac{1}{4}}$.  One might expect $I_\eta \sim k^{\frac{1}{2}}$ from this, but in fact it is of size $k^{\frac{1}{6}}$.  As we shall see below, the point is that the phase in \eqref{Ieta} has a cubic degeneracy, and this (rather than the support) determines the size of $I_\eta$.

We now explicate the relation between $I_\eta$ and the geometric side of the Petersson formula, and analyze the phase of the integral in the transition range.  Writing $\eta = \begin{pmatrix} a & b \\ c & d \end{pmatrix}$ with $c > 0$, the double coset $N \eta N$ is determined by $c$ and the residue class of $a$ mod $c$.  Moreover, we have
\[
\begin{pmatrix} a & b \\ c & d \end{pmatrix} = \begin{pmatrix} 1 & a/c \\ & 1 \end{pmatrix} \begin{pmatrix} & -1/c \\ c & \end{pmatrix} \begin{pmatrix} 1 & d/c \\ & 1 \end{pmatrix}.
\]
Changing variable in $x$ and $y$ by a translation, we have
\begin{multline*}
I_\eta = e\left( -\left(na + md\right)/c\right)\\
\times \int_{-\infty}^\infty \int_{-\infty}^\infty f\left(\sigma_n^{-1} \begin{pmatrix} 1 & -x \\ & 1 \end{pmatrix} \begin{pmatrix} & -1/c \\ c & \end{pmatrix} \begin{pmatrix} 1 & y \\ & 1 \end{pmatrix} \sigma_m \right) e\left(-nx + my\right) dx dy.
\end{multline*}
Conjugating the matrices $\sigma_n$ and $\sigma_m$ though to the middle and changing variable gives
\begin{multline*}
I_\eta = e\left( -\left(na + md\right)/c\right) \frac{k^2}{\left(4 \pi\right)^2 mn} \\
\int_{-\infty}^\infty \int_{-\infty}^\infty f\left( \begin{pmatrix} 1 & -x \\ & 1 \end{pmatrix} \begin{pmatrix} & -4 \pi n / kc \\ kc / 4 \pi m & \end{pmatrix} \begin{pmatrix} 1 & y \\ & 1 \end{pmatrix} \right)
e\left(k\left(-x+y\right)/4\pi\right) dx dy.
\end{multline*}
If we define
\[
A\left(t,k\right) = \int_{-\infty}^\infty \int_{-\infty}^\infty f\left( \begin{pmatrix} 1 & -x \\ & 1 \end{pmatrix} \begin{pmatrix} & -1/t \\ t & \end{pmatrix} \begin{pmatrix} 1 & y \\ & 1 \end{pmatrix} \right) e\left(k\left(-x+y\right)/4\pi\right) dx dy,
\]
then the contribution from all $\eta$ with a given value of $c$ is
\[
\frac{k^2}{\left(4 \pi\right)^2 mn} S\left(m,n,c\right) A\left(kc / 4 \pi \sqrt{mn}, k\right).
\]
In \cite[Prop. 3.6]{KL}, Knightly and Li calculate
\[
A\left(t,k\right) = \frac{e^{-k} i^k 4 \pi k^{k-1} }{ 2t \left(k-2\right)!} J_{k-1}\left( k /t\right) \sim \frac{k^{\frac{1}{2}}}{t} J_{k-1}\left( k /t\right),
\]
which gives the required appearance of $J_{k-1}$ on the geometric side.

One again sees the geometric meaning of $A\left(t,k\right)$.  It is an integral of $K\left(x,y\right)$ against characters over a horocycle of height 1, and a horocycle corresponding to the point $0 \in \partial \mathbb{H}^2$ and whose highest point is at $i / t^2$.  One therefore expects a transition of $A\left(t,k\right)$ at $t = 1$, and this corresponds to $c = 4\pi \sqrt{mn} / k$ as claimed above.  We now write $A\left(1,k\right)$ as an oscillatory integral (with non-imaginary phase function), and examine its critical point.  Using our formula for $f$ gives
\begin{align*}
f\left( \begin{pmatrix} 1 & -x \\ & 1 \end{pmatrix} \begin{pmatrix} & -1 \\ 1 & \end{pmatrix} \begin{pmatrix} 1 & y \\ & 1 \end{pmatrix} \right)& = f\left( \begin{pmatrix} -x & -1 -xy \\ 1 & y \end{pmatrix} \right) \\
& = \frac{k-1}{4\pi} i^k \left(1 + \frac{xy}{2} + i\frac{\left(y-x\right)}{2}\right)^{-k} \\
& = \frac{k-1}{4\pi} i^k \exp\left( -k \log\left(1 + \frac{xy}{2} + i\frac{\left(y-x\right)}{2}\right) \right).
\end{align*}
Computing the Taylor expansion of $\log\left(1 + \frac{xy}{2} + i\frac{\left(y-x\right)}{2}\right)$ gives
\begin{multline*}
\log\left(1 + \frac{xy}{2} + i\frac{\left(y-x\right)}{2}\right) \\
 = \frac{xy}{2} + i\frac{\left(y-x\right)}{2} -\frac{1}{2}\left( -\frac{\left(y-x\right)^2}{4} + i \frac{xy \left(y-x\right)}{2}\right)
 - 4i \left(y-x\right)^\frac{3}{2} + O\left(x^4 + y^4\right) \\
 = \frac{\left(x+y\right)^2}{8} + i\left( \frac{\left(y-x\right)}{2} -\frac{xy\left(y-x\right)}{4} -4\left(y-x\right)^\frac{3}{2} \right) + O\left(x^4 + y^4\right).
\end{multline*}
Substituting this into $A\left(1,k\right)$ gives
\begin{multline*}
A\left(1,k\right) = \frac{k-1}{4\pi} i^k \times \\
\iint_{\mathbb{R}^2} \exp\left( -k\frac{\left(x+y\right)^2}{8} + ik\left( \frac{xy\left(y-x\right)}{4} + \frac{\left(y-x\right)^3}{ 24}\right)  + k O\left(x^4 + y^4\right) \right) dx dy.
\end{multline*}
The leading term $-k\left(x+y\right)^2/8$ in the phase truncates the integral to the line $x+y = 0$ at scale $k^{-\frac{1}{2}}$, and along this line the leading term in the phase is imaginary with a cubic degeneracy.  This is why one has $A\left(1,k\right) \sim k^{\frac{1}{6}}$ compared to $A\left(t,k\right) \sim 1$ for $t < 1$.

\bibliography{20200529final}
\bibliographystyle{alpha}

\end{document}